\crefname{equation}{}{}
\newtheorem{lemma}{Lemma}[section]
\newtheorem{proposition}[lemma]{Proposition}
\newtheorem{theorem}[lemma]{Theorem}
\newtheorem{prop}[lemma]{Proposition}
\newtheorem{corollary}[lemma]{Corollary}
\newtheorem{setting}[lemma]{Setting}
\crefname{subsection}{Subsection}{Subsections}
\crefname{enumi}{item}{items}
\DeclareMathAlphabet{\mathscr}{LS1}{stixscr}{m}{n}
\newcommand{\smallsum}{{\textstyle\sum}}
\newcommand{\1}{\ensuremath{\mathbbm{1}}}
\providecommand{\N}{{\ensuremath{\mathbbm{N}}}}
\providecommand{\Z}{{\ensuremath{\mathbbm{Z}}}}
\providecommand{\R}{{\ensuremath{\mathbbm{R}}}}
\providecommand{\E}{{\ensuremath{\mathbbm{E}}}}
\providecommand{\var}{{\ensuremath{\operatorname{\mathbb{V}ar}}}}
\renewcommand{\P}{{\ensuremath{\mathbbm{P}}}}
\newcommand{\F}{{\ensuremath{\mathbbm{F}}}}
\newcommand{\trace}{\mathrm{tr}}
\newcommand{\Hess}{\operatorname{Hess}}
\newcommand{\uniform}{\ensuremath{\mathcal{R}}}
\newcommand{\unif}{\ensuremath{\mathfrak{r}}}
\newcommand{\Exp}[1]{ \E \! \left[ #1 \right]}
\newcommand{\vastl}[2]{\left#2 \rule{0pt}{#1}\kern-1ex\right.}
\newcommand{\vastr}[2]{\left. \rule{0pt}{#1}\kern-1ex\right#2}
\newcommand{\funcF}{F}
\newcommand{\LipConstF}{L}
\newcommand{\fwpr}{\ensuremath{{Y}}} 
\newcommand{\sppr}{\ensuremath{Y}}
\newcommand{\exactpr}{\ensuremath{X}}
\newcommand{\bigV}{U}
\newcommand{\funcG}{g}
\newcommand{\smallV}{u}
\newcommand{\lyaV}{\varphi}
\newcommand{\smallU}{u}
\newcommand{\defeq}{\curvearrowleft}
\newcommand{\sigmaAlgebra}{{\mathfrak{S}}}
\newcommand{\induct}{\dashrightarrow}
\newcommand{\vertiii}[1]{{\left\vert\kern-0.25ex\left\vert\kern-0.25ex\left\vert #1 
		\right\vert\kern-0.25ex\right\vert\kern-0.25ex\right\vert}}
\newcommand{\tripleNorm}[1]{\vertiii#1} 
\newcommand{\lemdreisechst}{\alpha}
\newcommand{\lemdreisechsT}{\beta}
\newcommand{\lemdreisechsc}{c}
\newcommand{\size}[1]{\delta}
\newcommand{\rdownni}[2]{{\left \llcorner#1\right \lrcorner}}
\newcommand{\stTau}[3]{\boldsymbol{\tau}}
\newcommand{\smallF}{f}
\newcommand{\cF}{\mathcal{F}}
\renewcommand{\epsilon}{\varepsilon}
\newcommand{\growrate}{\rho}
\newcommand{\cO}{\mathcal{O}}
\newcommand{\norm}[1]{ \left\| #1 \right\| }
\newcommand{\cV}{\mathcal{V}}
\newcommand{\lyaPsi}{\psi}
\newcommand{\FE}{\mathfrak{C}}
\title{
Multilevel Picard approximations for\\
high-dimensional semilinear second-order\\
PDEs with Lipschitz nonlinearities}
\author{Martin Hutzenthaler$^{1}$, Arnulf Jentzen$^{2}$, \\
Thomas Kruse$^{3}$, and Tuan Anh Nguyen$^{4}$\bigskip\\
\small{$^1$ Faculty of Mathematics, University of Duisburg-Essen,}\\
\small{Essen, Germany; e-mail: \texttt{martin.hutzenthaler}\textcircled{\texttt{a}}\texttt{uni-due.de}}\\
\small{$^2$ Faculty of Mathematics and Computer Science, University of M\"unster,}\\
\small{M\"unster, Germany; e-mail: \texttt{ajentzen}\textcircled{\texttt{a}}\texttt{uni-muenster.de}}\\
\small{$^3$ Institute of Mathematics, University of Gie{\ss}en,}\\
\small{Gie{\ss}en, Germany; e-mail: \texttt{thomas.kruse}\textcircled{\texttt{a}}\texttt{math.uni-giessen.de}}\\
\small{$^4$ Faculty of Mathematics, University of Duisburg-Essen,}\\
\small{Essen, Germany; e-mail: \texttt{tuan.nguyen}\textcircled{\texttt{a}}\texttt{uni-due.de}}
}
\begin{document}

\maketitle

\begin{abstract}
The recently introduced full-history recursive multilevel Picard (MLP) approximation methods have turned out to be quite successful in the numerical approximation of solutions of high-dimensional nonlinear PDEs. In particular, there are mathematical convergence results in the literature which prove that MLP approximation methods do overcome the curse of dimensionality in the numerical approximation of nonlinear second-order PDEs in the sense that the number of computational operations of the proposed MLP approximation method grows at most polynomially in both the reciprocal $1/\epsilon$ of the prescribed approximation accuracy $\epsilon>0$ and the PDE dimension $d\in \N=\{1,2,3, \ldots\}$. However, 
in each of the convergence results for MLP approximation methods in the literature it is assumed that the coefficient functions in front of the second-order differential operator are affine linear. In particular, 
until today there is no result in the scientific literature which proves that any semilinear second-order PDE with a general time horizon and a non affine linear coefficient function in front of the second-order differential operator can be approximated without the curse of dimensionality. It is the key contribution of this article to overcome this obstacle and to propose and analyze a new type of MLP approximation method for semilinear second-order PDEs with possibly nonlinear coefficient functions in front of the second-order differential operators. In particular, the main result of this article proves that this new MLP approximation method does indeed overcome the curse of dimensionality in the numerical approximation of semilinear second-order PDEs.
\end{abstract}

\pagebreak

\tableofcontents
\section{Introduction}
It is a very challenging task in applied mathematics to design and analyze approximation algorithms for high-dimensional nonlinear partial differential equations (PDEs) and this topic of research has been very intensively studied in the scientific literature in the last two decades. Especially, there are two types of approximation methods which have turned out to be quite successful in the numerical approximation of solutions of high-dimensional nonlinear second-order PDEs, namely, (I) deep learning based approximation methods for PDEs (cf., e.g., \cite{uchiyama1993solving,MeadeFernandez1994,Lagaris1998ArtificialNN,LiLuo2003,
EHanJentzen2017CMStat,BeckEJentzen2017,EYu2018,
FujiiTakahashiTakahashi2017,HanJentzenE2017,HenryLabordere2017,
Farahmand2017DeepRL,Raissi2018,
SirignanoSpiliopoulos2017,BeckerCheriditoJentzen2018arXiv,Becketal2018,
Magill2018NeuralNT,LongLuMaDong2018,HanLong2018,Berg2018AUD,
PhamWarin2019,LyeMishraRay2019, GoudenegeMolent2019,BeckBeckerCheridito2019,
JacquierOumgari2019,HurePhamWarin2019,ChanMikaelWarin2019,
BeckerCheriditoJentzen2019,Dockhorn2019,chen2019deep,grohs2019deep}) and (II) full-history recursive multilevel Picard approximation methods for PDEs (cf.\ \cite{EHutzenthalerJentzenKruse2016,HJKNW2018,EHutzenthalerJentzenKruse2017,HutzenthalerKruse2017,hutzenthaler2019overcoming,beck2019overcoming, giles20019generalised,hjk2019overcoming,becker2020numerical}; in the following we abbreviate \textit{full-history recursive multilevel Picard} as \textit{MLP}). Deep learning based approximation methods for PDEs are, roughly speaking, based on the idea to (Ia) approximate the PDE problem under consideration through a stochastic optimization problem involving deep neural networks as approximations for the solution or the derivatives of the solution of the PDE under consideration and to (Ib) apply stochastic gradient descent methods to approximately solve the resulting stochastic optimization problem. Even though there are a number of encouraging simulation results for deep learning based approximation methods for PDEs in the scientific literature, there are only partial mathematical error analyses in the scientific literature which only partly explain why deep learning based approximation methods for PDEs can approximately solve high-dimensional PDEs (cf., e.g., \cite{hutzenthaler2019proof, HanLong2018,SirignanoSpiliopoulos2017,BernerGrohsJentzen2018, ElbraechterSchwab2018,GrohsWurstemberger2018,
JentzenSalimovaWelti2018,KutyniokPeterseb2019,ReisingerZhang2019,
GrohsHornungJentzen2019}). In particular, there are no results in the scientific literature which prove that deep learning based approximation methods for PDEs overcome the curse of dimensionality in the sense that the number of computational operations of any deep learning based approximation method grows at most polynomially in both the reciprocal of the prescribed approximation accuracy and the PDE dimension. MLP approximation methods are, roughly speaking, based on the idea to (IIa) reformulate the PDE under consideration as a stochastic fixed point problem with the PDE solution being the fixed point of the stochastic fixed point equation, to (IIb) approximate the fixed point through Banach fixed point iterates (which are also referred to as Picard iterates in the context of integral fixed point equations), and to (IIc) approximate the resulting Banach fixed point iterates through suitable full-history recursive multilevel Monte Carlo approximations. In the case of MLP approximation methods there are both encouraging numerical simulation results (see \cite{becker2020numerical, EHutzenthalerJentzenKruse2017}) and rigorous mathematical results which prove that MLP approximation methods do indeed overcome the curse of dimensionality in the numerical approximation of nonlinear second-order PDEs (see \cite{EHutzenthalerJentzenKruse2016,HJKNW2018,HutzenthalerKruse2017,hutzenthaler2019overcoming,beck2019overcoming, giles20019generalised,hjk2019overcoming,beck2020overcomingelliptic}). However, in each of the convergence results for MLP approximation methods in the scientific literature it is assumed that the coefficient functions in front of the second-order differential operator are affine linear. In particular, until today there is no result in the scientific literature which proves that any semilinear second-order PDE with a general time horizon and a non affine linear coefficient function in front of the second-order differential operator can be approximated without the curse of dimensionality.

It is precisely the subject of this article to overcome this obstacle and to propose and analyze a new type of MLP approximation method for semilinear second-order PDEs with possibly nonlinear coefficient functions in front of the second-order differential operators. In particular, the main result of this article, \cref{m01_x} in \cref{sec:main_result} below, proves that this new MLP approximation method overcomes the curse of dimensionality in the numerical approximation of semilinear second-order PDEs in the sense that the number of computational operations of the proposed MLP approximation method grows at most polynomially in both the reciprocal $1/\epsilon$ of the prescribed approximation accuracy $\epsilon\in (0,\infty)$ and the PDE dimension $d\in \N=\{1,2,3,\ldots\}$. To briefly outline the contribution of this work within this introductory section, we now present in the following result, \cref{m01b} below, a special case of \cref{m01}.

\begin{theorem}
\label{m01b}
Let
$c,T\in  [0,\infty)$,
$f\in C( \R,\R)$,
for every $d\in\N$ let $u_d\in C^{1,2}([0,T]\times\R^d,\R)$,
$\mu_d=
(\mu_{d,i})_{i\in\{1,2,\ldots,d\}}
\in C( \R^{d} ,\R^{d})$, $\sigma_d=(\sigma_{d,i,j})_{i,j\in \{1,2,\ldots,d\}}\in C( \R^{d},\R^{d\times d})$
satisfy for all $t\in[0,T]$,
 $x=(x_1,x_2,\ldots,x_d)$, $y=(y_1,y_2,\ldots,y_d)\in\R^d$
  that
\begin{equation}\label{eq:Lipschitz_f_intro}
|f(x_1)-f(y_1)|\leq c|x_1-y_1|,\quad |u_d(t,x)|^2+ 
\max_{ i, j \in \{ 1, 2, \ldots, d \} } ( | \mu_{ d, i }( 0 ) | + | \sigma_{ d, i, j }( 0 ) |)
\leq c \Bigl[d^c+\textstyle\sum\limits_{i=1}^{d}\displaystyle |x_i|^2\Bigr],
\end{equation}
\begin{equation}\label{eq:Lipschitz_coeffs_intro}
|u_d(T,x)-u_d(T,y)|^2+
\textstyle \sum\limits_{i=1}^{d} \displaystyle
 |\mu_{d,i}(x)-\mu_{d,i}(y)|^2+
\textstyle \sum\limits_{i,j=1}^{d}\displaystyle
|\sigma_{d,i,j}(x)-\sigma_{d,i,j}(y)|^2
\leq c\Big[
\textstyle \sum\limits_{i=1}^{d} \displaystyle |x_i-y_i|^2
\Big],
\end{equation}
\begin{equation}\label{eq:PDE_intro}
\text{and}\quad 
(\tfrac{\partial }{\partial t}u_d)(t,x)+
(\tfrac{ \partial }{ \partial x } u_d )( t, x ) \, \mu_d(x) 
+ \tfrac{1}{2}
\trace\big(\sigma_d(x)[\sigma_d(x)]^* (\Hess_xu)(t,x)\big)=
-f(u_d(t,x)),
\end{equation}
let $(\Omega,\mathcal{F},\P)$ be a probability space,
let 
$  \Theta = \bigcup_{ n \in \N }\! \Z^n$,
let $\unif^\theta\colon \Omega\to[0,1]$, $\theta\in \Theta$, be i.i.d.\ random variables\footnote{Note that the expression \emph{i.i.d.} is an abbreviation for the expression \emph{independent and identically distributed}.},
 let $W^{d,\theta}\colon [0,T]\times\Omega \to \R^{d}$,
$d\in\N$, $\theta\in\Theta$, be i.i.d.\ standard Brownian motions,
assume for all $t\in (0,1)$ that $\P(\unif^{0}\le t)=t$,
assume that $( \unif^{ \theta } )_{ \theta \in\Theta}$ and $( W^{ d, \theta } )_{ (d,\theta) \in \N\times\Theta }$ are independent,
for every $d,N\in\N$,
$\theta\in\Theta$, $x\in\R^d$,
$t\in[0,T)$
let 
$
Y^{d,N,\theta,x}_{t}=
(Y^{d,N,\theta,x}_{t,s})_{s\in[t,T]}\colon[t,T]\times\Omega\to\R^d$ 
 satisfy for all $n\in\{0,1,\ldots,N\}$,
 $s\in[\frac{nT}{N},\frac{(n+1)T}{N}]\cap[t,T]$ that $Y_{t,t}^{d,N,\theta,x}=x$ and
\begin{equation}\label{eq:euler_intro}
\begin{split}
&Y_{t,s}^{d,N,\theta,x} -
Y_{t,\max\left\{t,nT/N\right\}}^{d,N,\theta,x}\\
&=
\mu_d\big(Y_{t,\max\left\{t,nT/N\right\}}^{d,N,\theta,x}\big)\big(s-\max\!\big\{t,\tfrac{nT}{N}\big\}\big)+
\sigma_d\big(Y_{t,\max\left\{t,nT/N\right\}}^{d,N,\theta,x}\big)\big(W^{d,\theta}_{s} -W^{d,\theta}_{\max\{t,nT/N\}} \big),
\end{split}
\end{equation}
let
$ 
  {\bigV}_{ n,M}^{d,\theta} \colon [0, T] \times \R^d \times \Omega \to \R
$, 
$d,n,M\in\Z$, $\theta\in\Theta$, satisfy
for all $d,M \in \N$, $n\in \N_0$, $\theta\in\Theta $,
$ t \in [0,T]$, $x\in\R^d $
that 
\begin{align}\label{eq:method}
  &{\bigV}_{n,M}^{d,\theta}(t,x)
=
  \tfrac{ \1_{ \N }( n )}{M^n}
 \textstyle\sum\limits_{i=1}^{M^n} \displaystyle
      u_d\big(T,\sppr^{d,M^M,(\theta,0,-i),x}_{t,T}\big)
 \\
 \nonumber
&  +
  \textstyle\sum\limits_{\ell=0}^{n-1} \displaystyle \left[ \tfrac{(T-t)}{M^{n-\ell}}
   \textstyle\sum\limits_{i=1}^{M^{n-\ell}}\displaystyle
      \big(\smallF\circ {\bigV}_{\ell,M}^{d,(\theta,\ell,i)}-\1_{\N}(\ell)\,\smallF\circ{\bigV}_{\ell-1,M}^{d,(\theta,-\ell,i)}\big)
\big(t+(T-t)\unif^{(\theta,\ell,i)},\sppr_{t,t+(T-t)\unif^{(\theta,\ell,i)}}^{d,M^M,(\theta,\ell,i),x}\big)
    \right]\!,
\end{align}
and for every $d,n,M\in \N$ let $\FE_{d,n,M}\in\N$ 
be the number of function evaluations of $f$, $u_d(T,\cdot)$, $\mu_d$, and $\sigma_d$ and
the number of realizations of scalar random variables which
are used to compute one realization of
${\bigV}_{n,M}^{d,0}(0,0)\colon\Omega\to\R^d$ (cf.\ \eqref{eq:comp_effort_cor} for a precise definition).
Then there exist $\mathfrak{c}\in \R$ and $\mathsf{n}\colon \N\times (0,1] \to \N$ such that
for all $d\in\N$, $\epsilon\in(0,1]$
it holds that $\big(\E\bigl[|\smallU_d(0,0)-{\bigV}_{\mathsf{n}(d,\epsilon),\mathsf{n}(d,\epsilon)}^{d,0}(0,0)|^2\big]\big)^{ 1/2 }\leq \epsilon$ and $\FE_{d,\mathsf{n}(d,\epsilon),\mathsf{n}(d,\epsilon)}
\leq \mathfrak{c}d^\mathfrak{c} \epsilon^{-5}$. 
\end{theorem}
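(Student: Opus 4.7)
The plan is to derive \cref{m01b} as a specialization of the abstract main result \cref{m01} through the following ingredients.

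First, I would set up a stochastic fixed point for the PDE. By the classical Feynman--Kac representation, under the growth and Lipschitz hypotheses \eqref{eq:Lipschitz_f_intro}--\eqref{eq:PDE_intro}, the solution $u_d$ satisfies
\begin{equation*}
u_d(t,x) = \E\bigl[u_d(T,X_{t,T}^{d,x})\bigr] + \int_t^T \E\bigl[\smallF(u_d(s,X_{t,s}^{d,x}))\bigr]\,ds,
\end{equation*}
where $X^{d,x}$ is the strong solution of the It\^o SDE with coefficients $\mu_d,\sigma_d$ started from $x$ at time $t$. Since $\mu_d$ and $\sigma_d$ are only Lipschitz and not affine, the method substitutes $X^{d,x}$ by the Euler--Maruyama approximation $\sppr^{d,N,\cdot,x}$ from \eqref{eq:euler_intro} with $N=M^{M}$ steps. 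Accordingly, I would introduce the auxiliary function $\tilde u_{d,N}$ defined as the unique bounded fixed point of the analogous integral equation with $X$ replaced by $\sppr^{d,N}$, and bound the Euler bias $\E[|u_d(t,x)-\tilde u_{d,N}(t,x)|^{2}]^{1/2}\lesssim d^{c_1}(1+\|x\|)\,N^{-1/2}$ by a Gr\"onwall argument combining standard strong $L^{2}$ Euler estimates under Lipschitz coefficients with the Lipschitz continuity of $\smallF$ and of $u_d(T,\cdot)$.

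Second, I would analyze the MLP recursion \eqref{eq:method}, viewing its right-hand side as a telescoping multilevel Monte Carlo approximation of the $n$-th Picard iterate of the operator $\Phi(v)(t,x):=\E[u_d(T,\sppr_{t,T}^{d,N,\cdot,x})]+\int_t^T\E[\smallF(v(s,\sppr_{t,s}^{d,N,\cdot,x}))]\,ds$. Exploiting the independence of the families $(\unif^{\theta})_{\theta\in\Theta}$ and $(W^{d,\theta})_{\theta\in\Theta}$, the Lipschitz bound $|\smallF(a)-\smallF(b)|\le c|a-b|$, and a bias/variance decomposition at each level, I would establish by induction on $n$ a recursive $L^{2}$ bound of the form
\begin{equation*}
\bigl(\E\bigl[|\bigV_{n,M}^{d,0}(t,x)-\tilde u_{d,M^{M}}(t,x)|^{2}\bigr]\bigr)^{1/2}
\le \frac{C_{d,x,T}\,e^{cT}}{M^{n/2}}\sum_{k=0}^{n}\frac{(cT)^{k}}{\sqrt{k!}},
\end{equation*}
with $C_{d,x,T}$ polynomial in $d$; this is the standard MLP contraction estimate, modified so that the underlying state process $\sppr^{d,M^{M}}$ depends on the parameter $M$.

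Third, I would count the cost. A routine induction on $n$ applied to \eqref{eq:method} gives $\FE_{d,n,M}\le C d^{C}(3M)^{n}M^{M}$, the factor $M^{M}$ arising from the Euler step count and $(3M)^{n}$ from the recursive telescoping expansion. Combining the Euler bias of Step~1 with the MLP bound of Step~2 at $M=n$, and choosing $\mathsf{n}(d,\epsilon)$ logarithmically in $d^{C}/\epsilon$ so that both the $M^{-n/2}$ variance and the $M^{-M/2}$ Euler bias drop below $\epsilon/2$, the cost becomes $d^{C'}(3\mathsf{n})^{\mathsf{n}}\mathsf{n}^{\mathsf{n}}$, which routine log--exp manipulations bound by $\mathfrak{c}\,d^{\mathfrak{c}}\epsilon^{-5}$; the exponent $5$ comes from balancing the super-exponential cost $(3M)^{n}M^{M}$ against the product of the two required accuracy factors.

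The main obstacle is that the inner step count $N=M^{M}$ is coupled to the outer MLP level parameter $M$: varying $M$ changes both the fixed point operator $\Phi$ and the number of Euler steps per path, so the recursion in Step~2 is centred on a \emph{moving target} $\tilde u_{d,M^{M}}$ rather than on a single limit. Consequently all $d$-dependent constants (in particular those arising from the linear growth of $\mu_d,\sigma_d$, the $L^{2}$-moments of $\sppr^{d,N}$, and the Lipschitz constants in \eqref{eq:Lipschitz_coeffs_intro}) must be propagated uniformly in $M$ in order to preserve polynomial-in-$d$ complexity. Once these dependencies are tracked carefully, the conclusion follows from the abstract \cref{m01}.
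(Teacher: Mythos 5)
Your proposal is correct and follows essentially the same architecture as the paper: representing $u_d$ as the fixed point of a stochastic fixed point equation, introducing the Euler-perturbed fixed point and bounding the bias uniformly in $M$ (the paper's \cref{b01} and \cref{d08}), establishing the recursive MLP $L^2$-error bound relative to that perturbed fixed point (\cref{x01}), and balancing this against the cost recursion with $M=n$ to obtain the $\epsilon^{-(4+\gamma)}$, in particular $\epsilon^{-5}$, complexity (\cref{m01}, \cref{m01_x}, \cref{m01c}). The ``moving target'' issue you flag is exactly what the paper resolves by making the perturbation estimate of \cref{d08} depend on the step size only through the factor $\delta$, with all other constants independent of $M$.
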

\cref{m01b} follows from \cref{m01c}. \cref{m01c}, in turn, follows from \cref{m01_x} (see \cref{sec:main_result} for details).
In the following we add a few comments concerning the mathematical objects appearing in \cref{m01b} above. The real number $T\in (0,\infty)$ in \cref{m01b} above specifies the time horizon for the PDEs (see \cref{eq:PDE_intro}) whose solutions we intend to approximate in \cref{m01b} above. The real number $c\in (0,\infty)$ in \cref{m01b} above is a constant which we employ to formulate several regularity hypotheses in \cref{m01b} above. The function $f\colon \R \to \R$ in \cref{m01b} above describes the nonlinearity for the PDEs (see \cref{eq:PDE_intro}) whose solutions we intend to approximate in \cref{m01b} above. The functions $u_d\colon [0,T]\times \R^d\to \R$, $d\in \N$, in \cref{m01b} above describe the PDE solutions which we intend to approximate in \cref{m01b} above. The functions $\mu_d\colon \R^d\to \R^d$, $d\in \N$, in \cref{m01b} above describe the coefficient functions in front of the first-order derivative terms in the PDEs (see \cref{eq:PDE_intro}) whose solutions we intend to approximate in \cref{m01b} above. The functions $\sigma_d\colon \R^d\to \R^{d\times d}$, $d\in \N$, in \cref{m01b} above describe the coefficient functions in front of the second-order derivative terms in the PDEs (see \cref{eq:PDE_intro}) whose solutions we intend to approximate in \cref{m01b} above. In \cref{eq:Lipschitz_f_intro} and \cref{eq:Lipschitz_coeffs_intro} we formulate the Lipschitz hypotheses which we employ in \cref{m01b} above. In \cref{eq:PDE_intro} we specify the PDEs whose solutions we intend to approximate in \cref{m01b} above. The probability space $(\Omega,\mathcal{F},\P)$ in \cref{m01b} above is the probability space on which we introduce the stochastic MLP approximations which we employ to approximate the solutions $u_d\colon [0,T]\times \R^d\to \R$, $d\in \N$, of the PDEs in \cref{eq:PDE_intro}. 
The set $  \Theta = \bigcup_{ n \in \N }\!\Z^n$ in \cref{m01b} above is used as an index set to introduce sufficiently many independent random variables on this index set.
The functions $\unif^\theta\colon \Omega\to[0,1]$, $\theta\in \Theta$, describe on $[0,1]$ continuously uniformly distributed independent random variables which we use as random input sources for the MLP approximations which we employ in \cref{m01b} above to approximately compute the solutions $u_d\colon [0,T]\times \R^d\to \R$, $d\in \N$, of the PDEs in \cref{eq:PDE_intro}. 
The functions $W^{d,\theta}\colon [0,T]\times\Omega \to \R^{d}$,
$d\in\N$, $\theta\in\Theta$, describe independent standard Brownian motions which we use as random input sources for the MLP approximations which we employ in \cref{m01b} above to approximately compute the solutions $u_d\colon [0,T]\times \R^d\to \R$, $d\in \N$, of the PDEs in \cref{eq:PDE_intro}. The functions 
$
Y^{d,N,\theta,x}_{t}\colon[t,T]\times\Omega\to\R^d$,
$d,N\in\N$,
$\theta\in\Theta$, $x\in\R^d$,
$t\in[0,T)$, in \cref{eq:euler_intro} above
describe Euler-Mayurama approximations which we use in the MLP approximations in \cref{eq:method} in \cref{m01b} above as discretizations of the underlying It\^o processes associated to the linear parts of the PDEs in \cref{eq:PDE_intro}. The functions $ 
  {\bigV}_{ n,M}^{d,\theta} \colon [0, T] \times \R^d \times \Omega \to \R
$, 
$d,n,M\in\N_0$, $\theta\in\Theta$, in \cref{eq:method} describe the MLP approximations which we employ in \cref{m01b} above to approximately compute the solutions $u_d\colon [0,T]\times \R^d\to \R$, $d\in \N$, of the PDEs in \cref{eq:PDE_intro}. The natural numbers $\FE_{d,n,M}\in\N$, $d,n,M\in \N$,
describe the sum of the number of function evaluations of $f$, 
of the number of function evaluations of $u_d(T,\cdot)$, 
of the number of function evaluations of $\mu_d$,
of the number of function evaluations of  $\sigma_d$, and of
the number of realizations of scalar random variables which
are used to compute one realization 
of the MLP approximations 
which we employ in \cref{m01b} above to approximately compute the solutions $u_d\colon [0,T]\times \R^d\to \R$, $d\in \N$, of the PDEs in \cref{eq:PDE_intro} (cf.\ also \eqref{eq:comp_effort_cor} in \cref{m01c} in \cref{sec:main_result} for a precise definition of $(\FE_{ d, n, M } )_{ (d, n, M) \in \N^3 } \subseteq \N)$.
 \cref{m01b} establishes that the solutions $u_d\colon [0,T]\times \R^d\to \R$, $d\in \N$, of the PDEs in \cref{eq:PDE_intro} can be approximated by the MLP approximations 
$ 
  {\bigV}_{ n,M}^{d,\theta} \colon [0, T] \times \R^d \times \Omega \to \R
$, 
$d,n,M\in\N_0$, $\theta\in\Theta$, in \cref{eq:method} 
with the number of involved function evaluations of $f$, $u_d(T,\cdot)$, $\mu_d$, and $\sigma_d$ and the number of involved scalar random variables growing at most quintically in the reciprocal $1/\epsilon$ of the prescribed approximation accuracy $\epsilon\in (0,\infty)$ and at most polynomially in the PDE dimension $d\in \N$. Our proofs of \cref{m01b} above and \cref{m01_x} below, respectively, are partially based on previous analyses for MLP approximations in the scientific literature (cf., e.g., \cite{EHutzenthalerJentzenKruse2016,HJKNW2018,HutzenthalerKruse2017,hutzenthaler2019overcoming,beck2019overcoming, giles20019generalised,hjk2019overcoming,beck2020overcomingelliptic}) and on analyses for numerical approximations for SDEs with non-globally Lipschitz continuous coefficient functions (cf., e.g., \cite{HutzenthalerJentzen2014Memoires}). 

The remainder of this article is organized as follows. As mentioned above, MLP approximation methods are, roughly speaking, based on the idea to reformulate the PDE under consideration (see \eqref{eq:PDE_intro}) as a stochastic fixed point equation (see (IIa) above) and then to approximate the fixed point of the stochastic fixed point equation through suitable full-history recursive multilevel Monte Carlo approximations (see (IIb) and (IIc) above). In \cref{s02} below we establish existence, uniqueness, and regularity properties for solutions of such stochastic fixed point equations. In \cref{sec:rate} below we introduce MLP approximations for solutions of such stochastic fixed point equations (see \eqref{t27} in \cref{t26} in \cref{t26_0} below), we study measurability, integrability, and independence properties for the introduced MLP approximations (see \cref{ppt0}, \cref{h08}, and \cref{k08} in \cref{ppt00} below), and we establish in \cref{x01} in \cref{x010} below upper bounds for the $L^2$-distances between the exact solutions of the considered stochastic fixed point equations and the proposed MLP approximations. In our proof of \cref{x01} we employ certain function space-valued Gronwall-type inequalities, which we establish in \cref{g09-1}, \cref{g090a}, and \cref{g09} in \cref{subsec:gronwall} below. In \cref{sec:main_result} we combine the existence, uniqueness, and regularity properties for solutions of stochastic fixed point equations, which we have established in \cref{s02}, with the error analysis for MLP approximations for stochastic fixed point equations, which we have established in \cref{sec:rate} (see \cref{x01} in \cref{x010}), to obtain a computational complexity analysis for MLP approximations for semilinear second-order PDEs with possibly nonlinear coefficient functions in front of the second-order differential operators.

\section{Stochastic fixed-point equations}\label{s02}
In this section we establish in the elementary results in \cref{b01} and \cref{d08} below existence, uniqueness, and regularity properties for solutions of stochastic fixed point equations. Similar existence, uniqueness, and regularity results for solutions of stochastic fixed point equations can, e.g., be found in Beck et al.~\cite{beck2019existence} and Beck et al.~\cite[Theorem 3.7]{beck2020nonlinear}. In our proof of \cref{b01} we use the well-known auxiliary measurability result in \cref{lem:fubini} below. For completeness we also include in this section a detailed proof for \cref{lem:fubini}.

\sloppy
\subsection{Existence of solutions of stochastic fixed-point equations}
\label{sec:ex_sfp}

\begin{lemma}\label{lem:fubini}
Let $(X,\mathcal X)$ be a measurable space, let $(Y,\mathcal Y, \mu)$ be a sigma-finite measure space, let $f\colon X\times Y \to \R$ be measurable, and assume for all $x\in X$ that
$
\int_Y |f(x,y)|\,\mu(dy)<\infty
$.
Then it holds that $X\ni x\mapsto \int_Y f(x,y)\,\mu(dy)\in \R$ is measurable.
\end{lemma}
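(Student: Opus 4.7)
The plan is to run the standard measure-theoretic induction (sometimes called the ``standard machine''), first reducing to a finite measure and then successively extending the claim from indicators to nonnegative simple functions, to nonnegative measurable functions, and finally to integrable real-valued functions. Along the way, the key ingredient is a Dynkin-class argument showing that for any finite measure on $(Y,\mathcal Y)$, the map $x\mapsto \mu(E_x)$ is $\mathcal X$-measurable for every $E\in \mathcal X\otimes \mathcal Y$, where $E_x=\{y\in Y\colon (x,y)\in E\}$ denotes the $x$-section.

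First I would reduce to the finite measure case by choosing, using sigma-finiteness of $\mu$, a sequence $(Y_n)_{n\in\N}\subseteq \mathcal Y$ with $Y_n\uparrow Y$ and $\mu(Y_n)<\infty$, and defining finite measures $\mu_n(A)=\mu(A\cap Y_n)$. Assuming the claim has been proved for each $\mu_n$, dominated convergence (with integrable dominant $y\mapsto |f(x,y)|$) yields
\begin{equation*}
\textstyle\int_Y f(x,y)\,\mu(dy)=\lim\limits_{n\to\infty}\int_Y f(x,y)\,\mu_n(dy),
\end{equation*}
so measurability of the left-hand side follows from measurability of each function on the right and the fact that pointwise limits of measurable functions are measurable. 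Hence I may from now on assume $\mu(Y)<\infty$.

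Next I would carry out the Dynkin-class step. Let $\mathcal D$ be the collection of all $E\in \mathcal X\otimes \mathcal Y$ such that $X\ni x\mapsto \mu(E_x)\in [0,\infty)$ is $\mathcal X$-measurable. For a measurable rectangle $E=A\times B$ one has $\mu(E_x)=\1_A(x)\,\mu(B)$, which is clearly measurable, so $\mathcal D$ contains the $\pi$-system of measurable rectangles. Using $\mu(Y)<\infty$, one checks that $\mathcal D$ is closed under proper differences via $\mu((E\setminus F)_x)=\mu(E_x)-\mu(F_x)$, and closed under countable increasing unions via monotone convergence. Dynkin's $\pi$-$\lambda$ theorem then gives $\mathcal D=\mathcal X\otimes \mathcal Y$. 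This is the main obstacle, and it hinges on the finiteness reduction performed in the previous step. By linearity this immediately promotes the claim to every nonnegative $\mathcal X\otimes \mathcal Y$-measurable simple function.

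Finally I would pass from nonnegative simple functions to nonnegative measurable $f$ by approximating $f$ from below by an increasing sequence of simple functions $f_n\uparrow f$ and applying the monotone convergence theorem to obtain
\begin{equation*}
\textstyle\int_Y f(x,y)\,\mu(dy)=\lim\limits_{n\to\infty}\int_Y f_n(x,y)\,\mu(dy),
\end{equation*}
whose right-hand side is again a pointwise limit of measurable functions. For the general case in the lemma, I would decompose $f=f^+-f^-$ with $f^\pm\geq 0$ measurable, apply the previous step to each, and invoke the integrability assumption $\int_Y|f(x,\cdot)|\,d\mu<\infty$ to ensure $\int_Y f^\pm(x,\cdot)\,d\mu<\infty$ so that the difference is well defined and measurable. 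This completes the proof plan.
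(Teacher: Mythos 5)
Your proposal is correct, but it takes a more self-contained route than the paper. The paper's proof is essentially two lines: it writes $f$ as the difference of its positive and negative parts $\mathfrak{f}_k=\max\{(-1)^k f,0\}$ and then simply cites the measurability statement contained in Fubini--Tonelli (Klenke, Theorem 14.16) to conclude that $x\mapsto\int_Y \mathfrak{f}_k(x,y)\,\mu(dy)$ is measurable for each $k$, so that the difference is measurable. You instead reprove that cited ingredient from scratch via the standard machine: a reduction to finite measures using sigma-finiteness and dominated convergence (where you correctly exploit the integrability hypothesis to get the dominant $|f(x,\cdot)|$), a Dynkin $\pi$-$\lambda$ argument showing $x\mapsto\mu(E_x)$ is measurable for all $E\in\mathcal X\otimes\mathcal Y$, then linearity for simple functions, monotone convergence for nonnegative measurable functions, and finally the same $f=f^+-f^-$ decomposition the paper uses. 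Each step checks out (in particular the integrability with respect to $\mu_n\le\mu$ is inherited, and the nonnegative case may take values in $[0,\infty]$ without harm). What the paper's approach buys is brevity, since the sigma-finiteness issue is absorbed into the quoted theorem; what yours buys is independence from the reference, at the cost of essentially reproving the measurability half of Tonelli's theorem, which is exactly the content the paper chose to outsource.
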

\begin{proof}[Proof of \cref{lem:fubini}]
Throughout this proof let $\mathfrak{f}_k \colon X\times Y \to [0,\infty)$, $k \in \{ 0, 1 \}$, satisfy for all $k \in \{ 0, 1 \}$, $x\in X$, $y\in Y$ that
\begin{equation}
\mathfrak{f}_k(x,y)=\max\{(-1)^kf(x,y),0\}.
\end{equation}
Note that Fubini's theorem (cf., e.g., Klenke \cite[Theorem 14.16]{k08b}) ensures that for every measurable $g \colon X\times Y \to [0,\infty)$ it holds that 
$X \ni x \mapsto \int_Y g(x,y)\, \mu(dy) \in [0,\infty]$ is measurable. 
This proves that for all $k \in \{ 0, 1 \}$ it holds that $X \ni x \mapsto \int_Y \mathfrak{f}_k(x,y) \, \mu(dy) \in [0,\infty)$ is measurable. Combining this with the fact that for all $x\in X$ it holds that
$\int_Y f(x,y)\,\mu(dy)=\int_Y \mathfrak{f}_0(x,y)\,\mu(dy)-\int_Y \mathfrak{f}_1(x,y)\,\mu(dy)$ implies that $X\ni x\mapsto \int_Y f(x,y)\,\mu(dy)\in \R$ is measurable. The proof of \cref{lem:fubini} is thus complete.
\end{proof}

\renewcommand{\lyaV}{V}
\begin{prop}
	\label{b01}
Let 
	$d\in\N$, 
	$L,T,c\in [0,\infty)$, 	$\cO\in \mathcal{B}( \R^d)$ satisfy $\cO\neq \emptyset$,
let 
	$\lVert\cdot\rVert\colon\R^d \to [0,\infty)$ be a norm on $\R^d$, 
let 
	$(\Omega,\mathcal{F},\P)$ 
	be a probability space, 
let $X^x_{ t, s } 
	\colon \Omega \to \cO$, $s\in [t,T]$, $t \in [0,T]$, $x \in \cO$, be random variables,
assume for every
	measurable $\psi\colon[0,T]\times\cO\to[0,\infty)$ that 
  $\{(t,s)\in[0,T]^2\colon t\leq s\}\times\cO\ni(t,s,x)\mapsto \E[\psi(s,X^{x}_{t,s})]\in[0,\infty]$
  is
	measurable,
let 
	$f\colon [0,T]\times\cO\times\R\to\R$, $g\colon \cO\to\R$, 
and
	$V\colon [0,T]\times\cO\to (0,\infty) $ 
be measurable, 
assume for all 
	$t\in [0,T]$, 
	$s\in [t,T]$, 
	$x\in\cO$, 
	$v,w\in\R$
that 
$| f(t,x,0) | \leq c V(t,x)$, $|g(x)| \leq c V(T,x)$, $\E[V(s,X^{x}_{t,s})] \leq V(t,x)$, and
$	
	|f(t,x,v)-f(t,x,w)|
	\leq 
	L |v-w|$.
Then 
\begin{enumerate}[(i)]
\item \label{v17}
there exists a unique 
measurable $u\colon[0,T]\times\cO\to\R$
which satisfies for all $t\in [0,T]$, 
	$x\in\cO$  that
$\Exp{ |
		g(X^{x}_{t,T}) |
    }
		+ 
		\int_t^{T} 
    \Exp{|
		f(s,X^{x}_{t,s},{u}(s,X^{x}_{t,s}))|}ds+
\sup_{y\in\cO}\sup_{s\in [0,T]} \frac{|u(s,y)|}{V(s,y)}		
		<\infty$
		 and
	\begin{equation}\label{eq:sfpe}
	u(t,x) 
	= 
	\Exp{ 
		g(X^{x}_{t,T}) 
    }
		+ 
		\int_t^T 
	\Exp{ 
		f\bigl( s, X^{x}_{t,s}, u(s,X^{x}_{t,s})\bigr)} ds 
	\end{equation} 
and
\item\label{v18} it holds that for all $t\in[0,T]$ that
	\begin{equation}\label{t99}
	\sup_{x\in\cO}\left(\frac{|u(t,x)|}{V(t,x)}\right)
  \leq \left[\sup_{x\in\cO}\left(\frac{|g(x)|}{V(T,x)}\right)
  +\sup_{x\in\cO}\sup_{s\in[t,T]}\left(\frac{|Tf(s,x,0)|}{V(s,x)}\right)\right]e^{L(T-t)}.
	\end{equation}
\end{enumerate}
\end{prop}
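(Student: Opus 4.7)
My approach is the standard Banach fixed-point argument on a Bielecki-weighted space of measurable functions with $V$-controlled growth. Let $E$ denote the set of measurable functions $w\colon[0,T]\times\cO\to\R$ with $\|w\|_0:=\sup_{(s,y)\in[0,T]\times\cO}|w(s,y)|/V(s,y)<\infty$, and for a parameter $\beta\in(L,\infty)$ to be chosen, equip $E$ with the equivalent norm $\|w\|_\beta:=\sup_{(s,y)}e^{-\beta(T-s)}|w(s,y)|/V(s,y)$. Since $V>0$ is measurable, $(E,\|\cdot\|_\beta)$ is a Banach space by standard arguments (completeness follows from pointwise limits of measurable functions being measurable). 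Define the Picard operator $\Phi\colon E\to\R^{[0,T]\times\cO}$ by
\begin{equation*}
(\Phi w)(t,x)=\Exp{g(X^x_{t,T})}+\int_t^T\Exp{f(s,X^x_{t,s},w(s,X^x_{t,s}))}\,ds.
\end{equation*}

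\textbf{Self-map and measurability.} To see that $\Phi(E)\subseteq E$, I use the hypotheses $|g(y)|\le cV(T,y)$, $|f(s,y,0)|\le cV(s,y)$, and the Lipschitz bound to get $|f(s,y,w(s,y))|\le cV(s,y)+L\|w\|_0 e^{\beta(T-s)}V(s,y)$, and then the supermartingale-type inequality $\Exp{V(s,X^x_{t,s})}\le V(t,x)$ shows $(\Phi w)(t,x)$ is absolutely convergent and bounded by a constant times $V(t,x)$, hence $\|\Phi w\|_0<\infty$. For the measurability of $\Phi w$, I apply the standing hypothesis that $(t,s,x)\mapsto\E[\psi(s,X^x_{t,s})]$ is measurable for every measurable $\psi\ge 0$ first to $\psi=|g|\1_{\{s=T\}}$ (or directly to $g^{\pm}$) to get measurability of $(t,x)\mapsto\E[g(X^x_{t,T})]$, and then to $\psi(s,y)=\max\{\pm f(s,y,w(s,y)),0\}$ to get joint measurability of $(t,s,x)\mapsto\E[f(s,X^x_{t,s},w(s,X^x_{t,s}))]$ in $(t,s,x)$; \cref{lem:fubini} with $\mu=\mathrm{Leb}|_{[0,T]}$ then yields measurability of the $s$-integral in $(t,x)$, and the indicator $\1_{[t,T]}(s)$ is jointly measurable so the truncation to $s\in[t,T]$ is harmless.

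\textbf{Contraction and existence/uniqueness.} For $w_1,w_2\in E$ and $(t,x)\in[0,T]\times\cO$, the Lipschitz assumption and $\Exp{V(s,X^x_{t,s})}\le V(t,x)$ give
\begin{equation*}
|(\Phi w_1)(t,x)-(\Phi w_2)(t,x)|\le L\|w_1-w_2\|_\beta\int_t^T e^{\beta(T-s)}\Exp{V(s,X^x_{t,s})}\,ds\le L\|w_1-w_2\|_\beta V(t,x)\tfrac{e^{\beta(T-t)}-1}{\beta},
\end{equation*}
so that $\|\Phi w_1-\Phi w_2\|_\beta\le(L/\beta)\|w_1-w_2\|_\beta$. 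Choosing any $\beta>L$ makes $\Phi$ a strict contraction, and Banach's fixed-point theorem produces a unique $u\in E$ with $\Phi u=u$, establishing \eqref{v17}: the integrability statement follows from the bounds already derived in the self-map step, and the uniqueness clause of \eqref{v17} is exactly uniqueness in $E$ because the stated integrability hypothesis $\sup_{s,y}|u(s,y)|/V(s,y)<\infty$ forces any candidate solution into $E$.

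\textbf{The a priori bound (ii).} Writing $G:=\sup_{y\in\cO}|g(y)|/V(T,y)\le c$, $F:=\sup_{(s,y)}|Tf(s,y,0)|/V(s,y)\le Tc$, and $\phi(t):=\sup_{x\in\cO}|u(t,x)|/V(t,x)$, the fixed-point identity combined with $|g|\le GV(T,\cdot)$, $|f(s,\cdot,0)|\le(F/T)V(s,\cdot)$, $|u(s,y)|\le\phi(s)V(s,y)$, and the supermartingale property yields
\begin{equation*}
\frac{|u(t,x)|}{V(t,x)}\le G+\frac{F}{T}(T-t)+L\int_t^T\phi(s)\,ds\le G+F+L\int_t^T\phi(s)\,ds,
\end{equation*}
so that $\phi(t)\le G+F+L\int_t^T\phi(s)\,ds$ for all $t\in[0,T]$. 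Gronwall's lemma (backward in time) then gives $\phi(t)\le(G+F)e^{L(T-t)}$, which is exactly \eqref{t99}. The only genuine subtlety I expect is the bookkeeping for joint measurability of $(\Phi w)$ under the somewhat weak measurability hypothesis provided, which is exactly why \cref{lem:fubini} has been prepared.
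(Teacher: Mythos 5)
Your proof takes the same route as the paper: Banach's fixed-point theorem on the $V$-weighted space of measurable functions, an exponential reweighting (Bielecki norm) to make the Picard operator contractive, \cref{lem:fubini} to handle the joint measurability of the inner expectation, and a backward Gronwall argument for the a priori bound. Your norm $\sup_{(s,y)} e^{-\beta(T-s)}|w(s,y)|/V(s,y)$ is just $e^{-\beta T}$ times the paper's $\sup_{(s,y)} e^{\beta s}|w(s,y)|/V(s,y)$, so these are the same up to a constant, and the contraction estimate, measurability handling, and self-map argument all match.

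There is one small but genuine gap in your treatment of (ii). You define $\phi(t)=\sup_{x\in\cO}|u(t,x)|/V(t,x)$ and then invoke Gronwall on the inequality $\phi(t)\le G+F+L\int_t^T\phi(s)\,ds$. For this to make sense you need $\phi$ to be Lebesgue measurable in $t$, but an uncountable supremum over $x$ of measurable functions need not be measurable, and $u,V$ are only assumed measurable. The paper sidesteps this by working with the monotone envelope $\tilde\phi(t)=\sup_{r\in[t,T]}\sup_{x\in\cO}|u(r,x)|/V(r,x)$, which is non-increasing in $t$ (hence Borel measurable), finite because $u\in\cV$, and satisfies the same integral inequality; applying Gronwall to $\tilde\phi$ and noting $\phi\le\tilde\phi$ then gives \eqref{t99}. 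Your argument goes through verbatim once you make this replacement, but as written the step "Gronwall's lemma then gives" is not justified.
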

\begin{proof}[Proof of \cref{b01}]
 Throughout this proof let $\cV$ satisfy
 \begin{equation}
	\cV = \left\{
u\colon [0,T]\times\cO\to\R \colon 
\left[
u\text{ is 
measurable and}
\left[
\sup_{t\in [0,T]}\sup_{x\in\cO} \left(\frac{|u(t,x)|}{V(t,x)}\right)<\infty \right]
\right]
	\right\} \label{v10}
 \end{equation}
 and let 
 	$\lVert\cdot\rVert_{\lambda}\colon \cV \to [0,\infty)$, $\lambda\in \R$, 
satisfy for every 	$\lambda\in \R$, $v\in\cV$ that 
	\begin{equation}\label{v10b}
	\|v\|_{\lambda} 
	= 
	\sup_{t\in [0,T]}
	\sup_{x\in \cO} 
	\left( 
	\frac{e^{\lambda t}|v(t,x)|}{V(t,x)}
	\right).
	\end{equation}
Note that \cref{v10} and \cref{v10b} ensure that for all $\lambda\in \R$ it holds that $( \cV, \lVert \cdot \rVert_{ \lambda } )$ is a normed $\R$-vector space. Next we show that $( \cV, \lVert \cdot \rVert_{ 0 } )$ is an $\R$-Banach space. For this let $v = ( v_n )_{ n \in \N } \colon \N \to \cV$ satisfy 
\begin{equation}\label{v10c}
\limsup\nolimits_{ N \to \infty } \big( \sup\nolimits_{ n, m \in \N \cap [N,\infty) } \norm{v_n-v_m}_{0}\big) = 0.
\end{equation}
Observe that \cref{v10c} demonstrates that for all $t\in[0,T]$, $x\in\cO$ it holds that $v_n(t,x)\in \R$, $n \in \N$, is a Cauchy sequence. The fact that $( \R , \lvert \cdot \rvert)$ is an $\R$-Banach space hence assures that there exists $\phi\colon[0,T]\times\cO\to\R$ which satisfies for all $t\in[0,T]$, $x\in \cO$ that $\limsup_{ n \to \infty } | \phi(t,x) - v_n(t,x) | = 0$. Combining this with the fact that for all $n\in\N$ it holds that $v_n$ is measurable proves that $\phi$ is measurable. Next observe that the fact that for all $t\in[0,T]$, $x\in \cO$ it holds that $\limsup_{ n \to \infty } | \phi(t,x) - v_n(t,x) | = 0$ yields that for all $N\in \N$ it holds that
\begin{equation}
\begin{split}
&
\sup_{ t \in [0,T] } \sup_{ x\in \cO } \left( \frac{ |\phi(t,x)| }{ V(t,x) } \right) 
= \sup_{ t \in [0,T] } \sup_{ x\in \cO }\left(\frac{ |\lim_{n\to\infty}v_n(t,x)|}{V(t,x)} \right)
\\
& 
\leq \sup_{ t \in [0,T] } \sup_{ x\in \cO } \left( \frac{ \left[ \sup_{ n \in \N } | v_n(t,x) | \right] }{ V(t,x) } \right)
 =  
\sup_{ n \in \N } \sup_{ t \in [0,T] } \sup_{ x\in \cO }  \left( \frac{ | v_n(t,x) | }{ V(t,x) } \right) = \sup_{ n\in \N} \| v_n \|_0
\\ &
\leq \left[ \sup_{ n \in \N \cap [N,\infty) } \| v_n - v_N \|_0 \right] + \left[ \max_{n \in \{ 1, 2, \ldots, N \}} \| v_n \|_0 \right]\\
&\leq \left[ \sup_{ n, m \in \N \cap [N,\infty) } \| v_n - v_m \|_0 \right] + \left[ \max_{ n \in \{ 1, 2, \ldots, N \}} \| v_n \|_0 \right].
\end{split}
\end{equation}
This and \cref{v10c} imply that
\begin{equation}
\sup_{ t \in [0,T] } \sup_{ x\in \cO } \left( \frac{ |\phi(t,x)| }{ V(t,x) } \right) \leq  \sup_{ n\in \N} \| v_n \|_0<\infty.
\end{equation}
Combining this with the fact that $\phi$ is measurable proves that $\phi \in \cV$. In addition, observe that \cref{v10c} assures that
\begin{equation}  \begin{split}
  \limsup_{n\to\infty}\|\phi-v_n\|_0
  &=
  \limsup_{n\to\infty}
  \left[
	\sup_{t\in [0,T]}
	\sup_{x\in \cO} 
	\left( 
	\frac{\lim_{m\to\infty}|v_m(t,x)-v_n(t,x)|}{V(t,x)}
	\right)
	\right]
  \\&
  \le \limsup_{ n \to \infty } \left[ \sup_{ t \in  [0,T] } \sup_{ x \in \cO } \sup_{ m \in \N \cap [n,\infty) }  \left( \frac{|v_m(t,x)-v_n(t,x)|}{V(t,x)} \right) \right]\\
  & = \limsup_{ n \to \infty } \left[ \sup_{ m \in \N \cap [n,\infty) } \|v_m-v_n\|_0 \right] = 0 .
\end{split}     \end{equation}
This demonstrates that 
	$(\cV,\lVert\cdot\rVert_{0})$ is an $\R$-Banach space. 	
Combining this with the fact that for all 
	$\Lambda\in\R$, 
	$\lambda\in [\Lambda,\infty)$, 
	$v\in \cV$ 
it holds that 
	$\norm{v}_{\Lambda} \leq \norm{v}_{\lambda} \leq e^{(\lambda-\Lambda) T}\norm{v}_{\Lambda} $ 
shows that for all 
	$\lambda\in\R$ 
it holds that 
	$(\cV,\lVert\cdot\rVert_{\lambda})$ is an $\R$-Banach space.
Moreover, observe that the fact that for all $x\in \cO$ it holds that $|g(x)| \leq c V(T,x)$ ensures that for all $t\in[0,T]$, $x\in\cO$ it holds that
\begin{equation}\label{eq:estimate_g}
\Exp{ |
		g(X^{x}_{t,T}) |
    }
    \le  \left[ \sup_{y\in\cO}\frac{|g(y)|}{V(T,y)}\right]\E[V(T,X^x_{t,T})]
    \leq
  \left[
  \sup_{y\in\cO}\left(\frac{|g(y)|}{V(T,y)}\right)\right]V(t,x)<\infty.
\end{equation}	
This implies that 
\begin{equation}\label{eq:exp_g_meas}
[0,T]\times\cO\ni(t,x)\mapsto
	\Exp{ 
		g(X^{x}_{t,T}) 
  }\in \R
\end{equation}
is measurable.
Furthermore, observe that the triangle inequality
and the fact that for all
$t\in [0,T]$, $x\in\cO$, $v,w\in\R$ it holds
that 
$
	|f(t,x,v)-f(t,x,w)|
	\leq 
	L |v-w|$
  yield that for all $v\in \cV$, $t\in[0,T]$, $s\in [t,T]$, $x\in\cO$ it holds that
  \begin{equation}  \begin{split}\label{eq:estimate.Phiv}
	&
    \Exp{|
		f(s,X^{x}_{t,s},v(s,X^{x}_{t,s}))|}
   	\leq
    \Exp{
		|f(s,X^{x}_{t,s},0)|
		+L|v(s,X^{x}_{t,s})|}
   \\&
	\leq
\left[\sup_{r\in[s,T]}\sup_{y\in\cO}\left(
  \frac{|f(r,y,0)|+L|v(r,y)|}{V(r,y)}\right)\right]\Exp{V(s,X^x_{t,s})}
   \\&
	\leq
  \left[
  \sup_{r\in[s,T]}\sup_{y\in\cO}
  \left(
  \frac{|f(r,y,0)|+L|v(r,y)|}{V(r,y)}\right)
  \right]V(t,x)<\infty.
  \end{split}     \end{equation}
 This implies that for all $v\in\cV$ it holds that
 \begin{multline}
   [0,T]\times [0,T]\times\cO\ni(s,t,x)
   \mapsto
    \1_{[t,T]}(s)\,
   \Exp{
		f\big(\max\{s, t\},X^{x}_{t,\max\{s, t\}},v(,X^{x}_{t,\max\{s,t\}})\big)}ds\in\R
 \end{multline}
 is measurable. Moreover, observe that \eqref{eq:estimate.Phiv} implies
that for all $v\in \cV$, $t\in[0,T]$, $x\in\cO$ it holds that
 \begin{equation}
 \int_0^{T} 
    \1_{[t,T]}(s)\,
   \big|\E\big[
		f\big(\max\{s, t\},X^{x}_{t,\max\{s, t\}},v(\max\{s, t\},X^{x}_{t,\max\{s,t\}})\big)\big]\big| \, ds<\infty.
 \end{equation}
 \cref{lem:fubini} and 
\cref{eq:exp_g_meas} hence prove that for all $v\in\cV$ it holds
that
  \begin{multline} 
  [0,T]\times\cO\ni(t,x)
  \mapsto
	\Exp{ 
		g(X^{x}_{t,T}) 
  }
  \\
		+ 
		\int_0^{T} 
    \1_{[t,T]}(s)\,
   \Exp{
		f\big(\max\{s, t\},X^{x}_{t,\max\{s, t\}},v(\max\{s, t\},X^{x}_{t,\max\{s,t\}})\big)}ds\in\R
   \label{v12} \end{multline}
  is measurable.
This and \eqref{eq:estimate.Phiv} imply that there exists
  $\Phi\colon\cV\to\cV$ which satisfies for all 
	$t\in [0,T]$, 
	$x\in\cO$, 
	$v\in\cV$ 
that 
\begin{equation}\label{v13}
	\left(\Phi(v)\right)\!(t,x) 
	= 
	\Exp{ 
		g(X_{t,T}^x) 
    }
		+ 
		\int_t^{T} 
    \Exp{
		f\bigl(s,X^{x}_{t,s},v(s,X^{x}_{t,s})\bigr)}ds .
\end{equation}
In addition, note that the fact that for all $t\in [0,T]$, $x\in \cO$, $v,w\in \R$ it holds that $|f(t,x,v)-f(t,x,w)|\le L|v-w|$ ensures that for all $\lambda\in (0,\infty)$, 
	$v,w\in\cV$  it holds that $
	\| \Phi(v) - \Phi(w) \|_{\lambda} 
	\leq 
	\frac{L}{\lambda} 
	\| v - w \|_{\lambda} 
$ (cf., e.g., Beck et al.~\cite[Lemma 2.8]{beck2019existence}).
Hence, we obtain for all 
	$\lambda \in [2L,\infty)$, 
	$v,w\in \cV$ 
that 
	\begin{equation}
	\| \Phi(v) - \Phi(w) \|_{\lambda} 
	\leq 
	\frac12 
	\| v - w \|_{\lambda}. 
	\end{equation}
Banach's fixed point theorem therefore demonstrates that there exists a unique $u\in\cV$ which satisfies $\Phi(u)=u$. 
Combining this, \eqref{v10}, and
\eqref{eq:estimate.Phiv} with \eqref{v13} establishes \cref{v17}. Next observe that \eqref{eq:estimate_g} and \eqref{eq:estimate.Phiv} imply that for all $t\in[0,T]$ it holds that
\begin{equation}  \begin{split}
  &\sup_{r\in[t,T]}\sup_{x\in\cO}\left(\frac{|u(r,x)|}{V(r,x)}\right)
  =
  \sup_{r\in[t,T]}\sup_{x\in\cO}\left(\frac{|(\Phi(u))(r,x)|}{V(r,x)}\right)
  \\&
	\leq
  \sup_{y\in\cO}\left(\frac{|g(y)|}{V(T,y)}\right)
  +\sup_{r\in[0,T]}\sup_{y\in\cO}\left(\frac{|Tf(r,y,0)|}{V(r,y)}\right)
  +L\int_t^T\bigg[\sup_{r\in[s,T]}\sup_{y\in\cO}\left(\frac{|u(r,y)|}{V(r,y)}\right)\bigg]\,ds.
\end{split}     \end{equation}
This, the fact that $u\in\cV$, and Gronwall's lemma
yield that for all $t\in[0,T]$ it holds that
\begin{equation}  \begin{split}
  \sup_{r\in[t,T]}\sup_{x\in\cO}\left(\frac{|u(r,x)|}{V(r,x)}\right)
	\leq
  \left[\sup_{y\in\cO}\left(\frac{|g(y)|}{V(T,y)}\right)
  +\sup_{r\in[0,T]}\sup_{y\in\cO}\left(\frac{|Tf(r,y,0)|}{V(r,y)}\right)
  \right]e^{L(T-t)}.
\end{split}     \end{equation}
This establishes \cref{v18}.
The proof of \cref{b01} is thus complete.
\end{proof}

\subsection{Perturbation analysis for stochastic fixed-point equations}
\label{sec:pert_sfp}
\renewcommand{\growrate}{\rho}

\begin{lemma}
\label{d08}
Let $d\in \N$, $\funcG\in C( \R^d,\R)$,
$c,L,\growrate,\eta\in [0,\infty)$, $T,\delta, p, q \in (0,\infty)$ satisfy $p^{-1}+q^{-1}\leq 1$,
let
$\lVert\cdot\rVert\colon\R^d \to [0,\infty)$ be a seminorm on $\R^d$,
let
$(\Omega, \mathcal{F}, \P)$
be a probability space,
let
$\exactpr_{t,(\cdot)}^{x,k}=(\exactpr_{t,s}^{x,k}(\omega))_{(s,\omega)\in[t,T]\times\Omega}
\colon[t,T]\times\Omega\to\R^d 
$,  $t\in[0,T] $, $x\in\R^d$, $k\in \{1,2\}$, be 
measurable,
let 
$\smallF\colon [0,T]\times \R^{d}\times\R\to\R$,
 $\lyaV \colon\R^d\to (0,\infty)$,
$\lyaPsi\colon [0,T]\times\R^d\to(0,\infty)$,
and
$\smallU_k\colon[0,T]\times \R^d\to\R$, $k\in \{1,2\}$,
be 
measurable, 
assume 
for all $s\in[0,T]$, $r\in[s,T]$
and all measurable $h \colon \R^d \times \R^d \to [0, \infty)$
  that 
  $
\R^d \times \R^d \ni (y_1,y_2) \mapsto \E\bigl[h\bigl(X^{y_1,1}_{s,r},X^{y_2,1}_{s,r}\bigr)\bigr] \in [0,\infty]
$
is measurable,
 and assume for all $x,y\in\R^d$, $t\in[0,T]$, $s\in[t,T]$, $r\in[s,T]$,
$v,w\in\R$, $k\in \{1,2\}$ and all measurable $h \colon \R^d \times \R^d \to [0, \infty)$ that
  \begin{equation}
  \label{b01g}
\exactpr_{t,t}^{x,k}
=x,
\qquad 
\E \big[\lyaPsi(s,\exactpr_{t,s}^{x,k})\big]\leq \eta \lyaPsi(t,x),
  \end{equation}
  \begin{equation}\label{b01k}
\E\! \left[ \E\!\left[ h\bigl(X^{\mathfrak{x},1}_{s,r},X^{\mathfrak{y},1}_{s,r}\bigr) \right] \bigr|_{(\mathfrak{x},\mathfrak{y})=(X_{t,s}^{x,1},X_{t,s}^{y,1})} \right]
   =\E\!\left[h\bigl(X^{x,1}_{t,r},X^{y,1}_{t,r}\bigr)\right] ,
  \end{equation}
  \begin{equation}
  \label{r20}
\max\{T|\smallF(t,x,v)-\smallF(t,y,w)|,|\funcG(x)-\funcG(y)|\}
\leq
{\LipConstF T}
 |v-w|+T^{-\nicefrac{1}{2}}
[
\lyaV(x)+\lyaV(y)]^{\nicefrac{1}{p}}\|x-y\|,
  \end{equation}
\begin{equation}\label{b10}\E\!\left[
\E\bigl[\|\exactpr_{s,r}^{\mathfrak{x},1}-\exactpr_{s,r}^{\mathfrak{y},1}\|^{q}\bigr]\bigr|_{\substack{(\mathfrak{x},\mathfrak{y})=(\exactpr_{t,s}^{x,1},\exactpr_{t,s}^{x,2})}}\right]\leq \delta^q\lyaPsi(t,x),
\end{equation}
\begin{equation}
\E\big[|\funcG(\exactpr_{t,T}^{x,k})|\big]+\int_t^T\E\big [| \smallF(s,\exactpr_{t,s}^{x,k},\smallU_k(s,\exactpr_{t,s}^{x,k}))|\big]\,ds
<\infty,
\end{equation}
\begin{equation}
\smallU_k(t,x)=\E\!\left[\funcG(\exactpr_{t,T}^{x,k})+\int_t^T \smallF(s,\exactpr_{t,s}^{x,k},\smallU_k(s,\exactpr_{t,s}^{x,k}))\,ds\right],
\label{b20}
\end{equation} 
and
$\max\{|\smallU_k(t,x)|^p,ce^{\growrate(t-s)}\E[\lyaV(\exactpr_{t,s}^{x,k})]\}\le c \lyaV(x)$.
Then  it holds for all  $t\in[0,T]$, $x\in\R^d$ that
\begin{align}
|\smallU_1(t,x)-  \smallU_2(t,x)|\leq
4(1+LT)T^{-\nicefrac{1}{2}} \exp\!\big((L+\tfrac{\rho}{p}+\eta^{\nicefrac{1}{q}}L)(T-t)\big) |\lyaV(x)|^{\nicefrac{1}{p}}  |\lyaPsi(t,x)|^{\nicefrac{1}{q}}  \delta.
\end{align}
\end{lemma}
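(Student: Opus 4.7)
The plan is to subtract the two fixed-point equations \eqref{b20}, apply the joint Lipschitz bound \eqref{r20} termwise, and close with a Gronwall estimate on the weighted quantity $E(t) := \sup_{x\in\R^d} |u_1(t,x) - u_2(t,x)|/(V(x)^{1/p}\psi(t,x)^{1/q})$. The starting move is to telescope the $f$-difference in the subtracted equations through the intermediate expression $f(s, X^{x,2}_{t,s}, u_1(s, X^{x,2}_{t,s}))$, thereby isolating a pure stability integrand (bounded by $L|(u_1-u_2)(s, X^{x,2}_{t,s})|$ via Lipschitz-in-$v$) from a consistency integrand that, together with the $g$-difference, records the mismatch between $X^{\cdot,1}$ and $X^{\cdot,2}$.

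The key observation for the explicit process-mismatch terms is that \eqref{b10} evaluated at $r=s$ degenerates to $\E[\|X^{x,1}_{t,s} - X^{x,2}_{t,s}\|^q]\leq \delta^q \psi(t,x)$, directly controlling the gap at a single time. Combined with H\"older's inequality in the conjugate pair $(p,q)$, the hypothesis $\E[V(X^{x,k}_{t,s})] \leq e^{\rho(s-t)}V(x)$, and \eqref{r20} applied to the $g$-difference and to the $y$-Lipschitz part of the consistency, this contributes at most $4T^{-1/2}e^{\rho(T-t)/p}V(x)^{1/p}\psi(t,x)^{1/q}\delta$, accounting for the $4T^{-1/2}$ prefactor. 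The $v$-Lipschitz part of the consistency equals $L\int_t^T \E[|u_1(s, X^{x,1}_{t,s}) - u_1(s, X^{x,2}_{t,s})|]\,ds$; this is handled by applying the SFPE \eqref{b20} for $u_1$ itself to $u_1(s, y_1) - u_1(s, y_2)$, using the tower property \eqref{b01k} for $X^{\cdot,1}$-continuations to reduce joint expectations, and again invoking H\"older with \eqref{b10}. An auxiliary Gronwall internal to this sub-argument supplies the factor $e^{L(T-t)}$ and yields a bound of order $4LT\cdot T^{-1/2}e^{(L+\rho/p)(T-t)}V(x)^{1/p}\psi(t,x)^{1/q}\delta$, furnishing the $LT$ in the $4(1+LT)$ prefactor.

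For the stability piece, H\"older together with the hypothesis $\E[\psi(s, X^{x,2}_{t,s})] \leq \eta\,\psi(t,x)$ yields
\[
\E\big[|(u_1-u_2)(s, X^{x,2}_{t,s})|\big] \leq E(s)\,\eta^{1/q}\,e^{\rho(s-t)/p}\,V(x)^{1/p}\psi(t,x)^{1/q},
\]
producing the recursion
\[
E(t) \leq 4(1+LT)T^{-1/2} e^{(L+\rho/p)(T-t)}\delta + L\eta^{1/q}\int_t^T E(s)\,e^{\rho(s-t)/p}\,ds.
\]
A standard Gronwall inequality (most cleanly applied after the rescaling $\bar E(t) := E(t)\,e^{-\rho(T-t)/p}$) then produces $E(t) \leq 4(1+LT)T^{-1/2}\exp((L+\rho/p+L\eta^{1/q})(T-t))\,\delta$, which, after multiplication by $V(x)^{1/p}\psi(t,x)^{1/q}$, is the claimed bound. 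The chief obstacle is precisely the $u_1$-modulus contribution $\E[|u_1(s, X^{x,1}_{t,s}) - u_1(s, X^{x,2}_{t,s})|]$: since $u_1$ carries no a priori spatial Lipschitz bound and $X^{\cdot,1}$ has no Lipschitz flow in the hypotheses, this quantity must be controlled by iterated use of the SFPE for $u_1$, carefully combined with \eqref{b10} and the Markov structure \eqref{b01k}; this is the delicate technical step of the proof and is what drives the appearance of the extra $L$ in the exponent.
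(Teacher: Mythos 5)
Your proposal matches the paper's proof step for step: the telescoping of the $f$-difference through $f(s,X^{x,2}_{t,s},u_1(s,X^{x,2}_{t,s}))$, the internal Gronwall on the SFPE for $u_1$ to obtain a spatial modulus, the transfer of that modulus to $\E[|u_1(s,X^{x,1}_{t,s})-u_1(s,X^{x,2}_{t,s})|]$ via \eqref{b01k} together with \eqref{b10}, and the weighted final Gronwall producing $\exp\!\big((L+\rho/p+L\eta^{1/q})(T-t)\big)$. The one technical refinement the paper adds -- and which your $E(t)$ should incorporate for the Gronwall step to be legitimate, since $|u_1-u_2|/\big(V^{1/p}\psi^{1/q}\big)$ need not be bounded a priori -- is to work with $\epsilon+\psi$ in place of $\psi$ for $\epsilon\in(0,1)$ (and to take a running supremum over $[t,T]$) and then send $\epsilon\downarrow 0$ at the end.
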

\begin{proof}[Proof of \cref{d08}]
\newcommand{\calX}{\mathcal{X}}
\newcommand{\calY}{\mathcal{Y}}
Throughout this proof assume w.l.o.g.\ that $c>0$. Note that
H\"older's inequality 
implies that 
for all $s\in[0,T]$, $r\in[s,T]$, $x_1,x_2\in\R^d$,
 $k_1,k_2\in\{1,2\}$ it holds that
\begin{align}\begin{split}
&\E \!\left[\left(\lyaV(X_{s,r}^{x_1,k_1})+\lyaV(X_{s,r}^{x_2,k_2})\right)^{\nicefrac{1}{p}}\left\|X_{s,r}^{x_1,k_1}-X_{s,r}^{x_2,k_2}\right\|\right]\\
&
\leq 
\left(\E \!\left[\lyaV(X_{s,r}^{x_1,k_1})+\lyaV(X_{s,r}^{x_2,k_2})\right]\right)^{\nicefrac{1}{p}}
\left(
\E\!\left[\left\|X_{s,r}^{x_1,k_1}-X_{s,r}^{x_2,k_2}\right\|^{q}\right]\right)^{\!\nicefrac{1}{q}}\\&\leq e^{\growrate(r-s)/p}(\lyaV(x_1)+\lyaV(x_2))^{\nicefrac{1}{p}}
\left(
\E\!\left[\left\|X_{s,r}^{x_1,k_1}-X_{s,r}^{x_2,k_2}\right\|^{q}\right]\right)^{\!\nicefrac{1}{q}}.\label{d13}
\end{split}\end{align}
This, \eqref{b20}, the triangle inequality,
Tonelli's theorem, 
and
\eqref{b01g}--\eqref{r20} show that for all $t\in[0,T]$, $s\in[t,T]$, $x_1,x_2\in\R^d$ it holds that 
\begin{align}\begin{split}
&\E\!\left[|\smallU_1(s,\exactpr_{t,s}^{x_1,1})-\smallU_1(s,\exactpr_{t,s}^{x_2,1})|
 \right]\\
&= \E\biggl[\Bigl|\E\Bigl[\funcG(\exactpr_{s,T}^{y_1,1})-\funcG(\exactpr_{s,T}^{y_2,1})\\
&\quad+
\smallint_{s}^{T}
\smallF(r,\exactpr_{s,r}^{y_1,1},\smallU_1(r,\exactpr_{s,r}^{y_1,1}))
-
\smallF(r,\exactpr_{s,r}^{y_2,1},\smallU_1(r,\exactpr_{s,r}^{y_2,1}))\,dr\Bigr]\Bigr|\Bigr|
_{\substack{(y_1,y_2)=(\exactpr_{t,s}^{x_1,1},\exactpr_{t,s}^{x_2,1})}}\biggr]
\\
&\leq  \E \!\left[|\funcG(\exactpr_{t,T}^{x_1,1})-\funcG(\exactpr_{t,T}^{x_2,1})|\right]\\
&\quad 
+\int_{s}^{T}\E\!\left[\left|
\smallF(r,\exactpr_{t,r}^{x_1,1},\smallU_1(r,\exactpr_{t,r}^{x_1,1}))
-
\smallF(r,\exactpr_{t,r}^{x_2,1},\smallU_1(r,\exactpr_{t,r}^{x_2,1}))\right|\right]dr\\
&\leq \E \!\left[T^{-\nicefrac{1}{2}}\left(\lyaV(\exactpr_{t,T}^{x_1,1})+\lyaV(\exactpr_{t,T}^{x_2,1})\right)^{\nicefrac{1}{p}}\left\|\exactpr_{t,T}^{x_1,1}-\exactpr_{t,T}^{x_2,1}\right\|\right]\\
&
\quad 
+\LipConstF\int_{s}^{T}\E\!\left[\left| \smallU_1(r,\exactpr_{t,r}^{x_1,1})-\smallU_1(r,\exactpr_{t,r}^{x_2,1})\right|\right]dr\\
&\quad+\int_{s}^{T}\E \!\left[T^{-\nicefrac{3}{2}}\left(\lyaV(\exactpr_{t,r}^{x_1,1})+\lyaV(\exactpr_{t,r}^{x_2,1})\right)^{\nicefrac{1}{p}}\left\|\exactpr_{t,r}^{x_1,1}-\exactpr_{t,r}^{x_2,1}\right\|\right]dr
\\
&\leq \LipConstF\int_{s}^{T}\E\!\left[\left| \smallU_1(r,\exactpr_{t,r}^{x_1,1})-\smallU_1(r,\exactpr_{t,r}^{x_2,1})\right|\right]dr+
 e^{\growrate(T-t)/p}(\lyaV(x_1)+\lyaV(x_2))^{\nicefrac{1}{p}}\\&\quad{\cdot}
\left[T^{-\nicefrac{1}{2}}
\left(
\E\!\left[\left\|\exactpr_{t,T}^{x_1,1}-\exactpr_{t,T}^{x_2,1}\right\|^{q}\right]\right)^{\!\nicefrac{1}{q}}+
\int_{t}^{T}T^{-\nicefrac{3}{2}}
\left(
\E\!\left[\left\|\exactpr_{t,r}^{x_1,1}-\exactpr_{t,r}^{x_2,1}\right\|^{q}\right]\right)^{\!\nicefrac{1}{q}}dr\right].\end{split}\end{align}
This, Gronwall's lemma, and the fact that for all $t\in [0,T]$, $s\in [t,T]$, $x\in \R^d$ it holds that $\max\{e^{-\growrate(s-t)}\E\bigl[\lyaV(\exactpr_{t,s}^{x,k})\bigr],|\smallU_1(t,x)|^p\}\le c \lyaV(x)$ imply that for all  $t\in[0,T]$, $s\in[t,T]$, $x_1,x_2\in\R^d$ it holds that
\begin{align}\begin{split}
&\E\!\left[|\smallU_1(s,\exactpr_{t,s}^{x_1,1})-\smallU_1(s,\exactpr_{t,s}^{x_2,1})|
 \right]\leq  e^{\growrate(T-t)/p}(\lyaV(x_1)+\lyaV(x_2))^{\nicefrac{1}{p}}\\
& {\cdot} \left[T^{-\nicefrac{1}{2}}
\left(
\E\!\left[\left\|\exactpr_{t,T}^{x_1,1}-\exactpr_{t,T}^{x_2,1}\right\|^{q}\right]\right)^{\!\nicefrac{1}{q}}+
\int_{t}^{T}T^{-\nicefrac{3}{2}}
\left(
\E\!\left[\left\|\exactpr_{t,r}^{x_1,1}-\exactpr_{t,r}^{x_2,1}\right\|^{q}\right]\right)^{\!\nicefrac{1}{q}}dr\right]e^{\LipConstF (T-s)}.
\end{split}\end{align}
Combining this with \eqref{b01g} assures that for all $s\in[0,T]$, $x_1,x_2\in\R^d$ it holds that 
\begin{align}\begin{split}
&|\smallU_1(s,{x_1})-\smallU_1(s,{x_2})|
\leq  e^{(\LipConstF+\growrate/p)(T-s)}(\lyaV(x_1)+\lyaV(x_2))^{\nicefrac{1}{p}}\\
& {\cdot} \left[T^{-\nicefrac{1}{2}}
\left(
\E\!\left[\left\|\exactpr_{s,T}^{x_1,1}-\exactpr_{s,T}^{x_2,1}\right\|^{q}\right]\right)^{\!\nicefrac{1}{q}}+
\int_{s}^{T}T^{-\nicefrac{3}{2}}
\left(
\E\!\left[\left\|\exactpr_{s,r}^{x_1,1}-\exactpr_{s,r}^{x_2,1}\right\|^{q}\right]\right)^{\!\nicefrac{1}{q}}dr\right].
\end{split}\end{align}
This, 
H\"older's inequality, 
 \eqref{b01k},
and \eqref{b10}  assure that for all
$t\in [0,T]$,
 $s\in[t,T]$, $x\in\R^d$ it holds that 
\begin{align}\begin{split}
&\E\bigl[|\smallU_1(s,\exactpr_{t,s}^{x,1})- \smallU_1(s,\exactpr_{t,s}^{x,2})|\bigr]\\
&\leq  e^{(\LipConstF+\growrate/p)(T-s)}
\E\!\left[
\left.
\left[T^{-\nicefrac{1}{2}}(\lyaV(z_1)+\lyaV(z_2))^{\nicefrac{1}{p}}
\left(
\E\!\left[\left\|\exactpr_{s,T}^{z_1,1}-\exactpr_{s,T}^{z_2,1}\right\|^{q}\right]\right)^{\!\nicefrac{1}{q}}\right]\right|_{(z_1,z_2)=(\exactpr_{t,s}^{x,1},\exactpr_{t,s}^{x,2})}\right]
\\
&+e^{(\LipConstF+\growrate/p)(T-s)}
\int_{s}^{T}\E\!\left[\left.\left[T^{-\nicefrac{3}{2}}
(\lyaV(z_1)+\lyaV(z_2))^{\nicefrac{1}{p}}
\left(
\E\!\left[\left\|\exactpr_{s,r}^{z_1,1}-\exactpr_{s,r}^{z_2,1}\right\|^{q}\right]\right)^{\!\nicefrac{1}{q}}\right]\right|_{\substack{(z_1,z_2)=(\exactpr_{t,s}^{x,1},\exactpr_{t,s}^{x,2}})}\right]dr\\
&\leq  2 e^{(\LipConstF+\growrate/p)(T-s)}T^{-\nicefrac{1}{2}}
\left(\E[\lyaV (\exactpr_{t,s}^{x,1})+\lyaV(\exactpr_{t,s}^{x,2}) ]\right)^{\nicefrac{1}{p}}
\\
&\cdot
\sup_{r\in[s,T]}\left(
\E\!\left[
\E\Bigl[\left\|\exactpr_{s,r}^{z_1,1}-\exactpr_{s,r}^{z_2,1}\right\|^{q}\Bigr]\Bigr|_{\substack{(z_1,z_2)=(\exactpr_{t,s}^{x,1},\exactpr_{t,s}^{x,2})}}\right]\right)^{\!\nicefrac{1}{q}}\\
&\leq
2T^{-\nicefrac{1}{2}} e^{(\LipConstF+\growrate/p)(T-t)}(2\lyaV(x))^{\nicefrac{1}{p}}\delta (\lyaPsi(t,x))^{\nicefrac{1}{q}}.
\end{split}\end{align}
This, \eqref{b20}, the triangle inequality, \eqref{r20}, \eqref{d13},
and
\eqref{b10} show that for all $t\in[0,T]$, $x\in\R^d$, $\epsilon\in (0,1)$ it holds that 
\begin{align}\begin{split}
&|\smallU_1(t,x)-\smallU_2(t,x)|\\
&
=\left|
\E\!\left[\funcG(\exactpr_{t,T}^{x,1})-\funcG(\exactpr_{t,T}^{x,2})+\int_t^T\!
 f(s,\exactpr_{t,s}^{x,1}, \smallU_1(s,\exactpr_{t,s}^{x,1}))- f(s,\exactpr_{t,s}^{x,2}, \smallU_2( s,\exactpr_{t,s}^{x,2} ))\,ds\right]\right|
\\
&\leq \E\!\left[|\funcG(\exactpr_{t,T}^{x,1})-\funcG(\exactpr_{t,T}^{x,2})|\right]+\int_t^T\! \E\!\left[|\smallF(s,\exactpr_{t,s}^{x,1},\smallU_1(s,\exactpr_{t,s}^{x,1})- (\smallF(s,\exactpr_{t,s}^{x,2},\smallU_1(s,\exactpr_{t,s}^{x,2}))|\right]ds\\&\quad+
\int_t^T\!\E\!\left[|\smallF(s,\exactpr_{t,s}^{x,2},\smallU_1(s,\exactpr_{t,s}^{x,2}))- \smallF (s,\exactpr_{t,s}^{x,2},\smallU_2(s,\exactpr_{t,s}^{x,2}))|\right]ds\\
&\leq  \E\!\left[T^{-\nicefrac{1}{2}}\left(\lyaV(\exactpr_{t,T}^{x,1})+\lyaV(\exactpr_{t,T}^{x,2})\right)^{\nicefrac{1}{p}}
\left\|\exactpr_{t,T}^{x,1}-\exactpr_{t,T}^{x,2}\right\|
\right]
\\&\quad
+\int_{t}^{T}\E\!\left[T^{-\nicefrac{3}{2}}\left(\lyaV(\exactpr_{t,s}^{x,1})+\lyaV(\exactpr_{t,s}^{x,2})\right)^{\nicefrac{1}{p}}
\left\|\exactpr_{t,s}^{x,1}-\exactpr_{t,s}^{x,2}\right\|\right]ds
\\
&\quad 
+\LipConstF\int_t^T\! \E\!\left[|\smallU_1(s,\exactpr_{t,s}^{x,1})- \smallU_1(s,\exactpr_{t,s}^{x,2})|\right]ds
+
\LipConstF
\int_t^T\!\E\!\left[|\smallU_1(s,\exactpr_{t,s}^{x,2})-  \smallU_2(s,\exactpr_{t,s}^{x,2})|\right]
ds\\
&\leq T^{-\nicefrac{1}{2}}e^{\growrate (T-t)/p}(2\lyaV(x))^{\nicefrac{1}{p}}\delta (\epsilon+\lyaPsi(t,x))^{\nicefrac{1}{q}}+
T^{-\nicefrac{1}{2}}e^{\growrate (T-t)/p}(2\lyaV(x))^{\nicefrac{1}{p}}\delta (\epsilon+\lyaPsi(t,x))^{\nicefrac{1}{q}}
\\
&\quad 
+\LipConstF T\left[2T^{-\nicefrac{1}{2}} e^{(\LipConstF+\growrate/p)(T-t)}(2\lyaV(x))^{\nicefrac{1}{p}}\delta (\epsilon+\lyaPsi(t,x))^{\nicefrac{1}{q}}\right]\\
&\quad
+\LipConstF
\int_t^T\!\left[
\sup_{r\in[s,T]}\sup_{z\in\R^d}\tfrac{e^{\growrate r/p}|\smallU_1(r,z)-  \smallU_2(r,z)|}{(\lyaV(z))^{\nicefrac{1}{p}}  (\epsilon+\lyaPsi(r,z))^{\nicefrac{1}{q}}  }\right]e^{-\growrate s/p}
\E\!\left[(\lyaV(\exactpr_{t,s}^{x,2}))^{\nicefrac{1}{p}}
(\epsilon+\lyaPsi(s,\exactpr_{t,s}^{x,2}))^{\nicefrac{1}{q}}
\right]ds.  \label{d14}
\end{split}\end{align}
Next note that \eqref{b01g} implies
for all $t\in[0,T]$, $x\in\R^d$ 
that $\lyaPsi(t,x)\leq \eta\lyaPsi(t,x)$. The fact that $\psi>0$ hence demonstrates that $\eta\geq1$.
H\"older's inequality,  the assumption that $1/p+1/q\leq 1$, and \eqref{b01g} therefore assure that for all $t\in[0,T]$, $s\in[t,T]$, $x\in\R^d$, $\epsilon\in(0,1)$ it holds that 
\begin{align}\begin{split}
&\E\bigl[(\lyaV(\exactpr_{t,s}^{x,2})^{\nicefrac{1}{p}}
(\epsilon+\lyaPsi(\exactpr_{t,s}^{x,2}))^{\nicefrac{1}{q}}
\bigr]\leq 
\bigl(\E[\lyaV(\exactpr_{t,s}^{x,2}]\bigr)^{\nicefrac{1}{p}}
\bigl(\epsilon+\E[\lyaPsi(s,\exactpr_{t,s}^{x,2})]\bigr)^{\nicefrac{1}{q}}\\
&
\leq e^{\growrate (s-t)/p}
(\lyaV(x))^{\nicefrac{1}{p}}
(\epsilon+\eta\lyaPsi(t,x))^{\nicefrac{1}{q}}\leq 
e^{\growrate (s-t)/p}
(\lyaV(x))^{\nicefrac{1}{p}}
\bigl[\eta(\epsilon+\lyaPsi(t,x))\bigr]^{\nicefrac{1}{q}}
. 
\end{split}\end{align}
This and \eqref{d14} demonstrate that 
for all $t\in[0,T]$, $x\in\R^d$, $\epsilon\in(0,1)$ it holds that
\begin{align}\begin{split}
&|\smallU_1(t,x)-\smallU_2(t,x)|\leq (2+2LT)\left[T^{-\nicefrac{1}{2}} e^{(\LipConstF+\growrate/p)(T-t)}(2\lyaV(x))^{\nicefrac{1}{p}}\delta(\epsilon+\lyaPsi(t,x))^{\nicefrac{1}{q}}\right]\\& +\LipConstF
\int_t^T\!\left[
\sup_{r\in[s,T],z\in\R^d}\frac{e^{\growrate r/p}|\smallU_1(r,z)-  \smallU_2(r,z)|}{(\lyaV(z))^{\nicefrac{1}{p}}  (\epsilon+\lyaPsi(r,z))^{\nicefrac{1}{q}}  }\right]e^{-\growrate s/p}
e^{\growrate (s-t)/p}
(\lyaV(x))^{\nicefrac{1}{p}}\eta^{\nicefrac{1}{q}}
(\epsilon+\lyaPsi(t,x))^{\nicefrac{1}{q}}\,ds.
\end{split}\end{align}
This and the fact $2^{1/p}\leq 2$ imply that for all $t\in[0,T]$, $\epsilon\in (0,1)$ it holds that 
\begin{align}\begin{split}
&\left[
\sup_{r\in[t,T],x\in\R^d}\frac{e^{(L+\growrate /p)r}|\smallU_1(r,x)-  \smallU_2(r,x)|}{(\lyaV(x))^{\nicefrac{1}{p}}  (\epsilon+\lyaPsi(r,x))^{\nicefrac{1}{q}}  }\right]\\&\leq 
4(1+LT)T^{-\nicefrac{1}{2}} e^{(\LipConstF+\growrate/p)T}\delta+\LipConstF\eta^{\nicefrac{1}{q}}
\int_t^T\!\left[
\sup_{r\in[s,T],z\in\R^d}\frac{e^{(L+\growrate /p)r}|\smallU_1(r,z)-  \smallU_2(r,z)|}{(\lyaV(z))^{\nicefrac{1}{p}}  (\epsilon+\lyaPsi(r,z))^{\nicefrac{1}{q}}  }\right]ds.
\end{split}\end{align}
Gronwall's lemma  therefore ensures that for all $t\in[0,T]$, $\epsilon\in(0,1)$ it holds that 
\begin{align}
\left[
\sup_{r\in[t,T],x\in\R^d}\frac{e^{(L+\growrate /p)r}|\smallU_1(r,x)-  \smallU_2(r,x)|}{(\lyaV(x))^{\nicefrac{1}{p}}  (\epsilon+\lyaPsi(r,x))^{\nicefrac{1}{q}}  }\right]\leq
4(1+LT)T^{-\nicefrac{1}{2}}e^{(\LipConstF+\growrate/p)T}\delta
e^{ \LipConstF\eta^{\nicefrac{1}{q}}(T-t)}.
\end{align}
Hence, we obtain that for all $t\in[0,T]$, $x\in\R^d$, $\epsilon\in(0,1)$ it holds that
\begin{align}
|\smallU_1(t,x)-  \smallU_2(t,x)|\leq
4(1+LT)T^{-\nicefrac{1}{2}} e^{(L+\rho/p+L\eta^{\nicefrac{1}{q}})(T-t)} (\lyaV(x))^{\nicefrac{1}{p}}  (\epsilon+\lyaPsi(t,x))^{\nicefrac{1}{q}}  \delta.
\end{align}
This completes the proof of \cref{d08}.
\end{proof}

\section{Full-history recursive multilevel Picard (MLP) approximations}\label{sec:rate}

In this section we introduce MLP approximations for solutions of stochastic fixed point equations (see \eqref{t27} in \cref{t26} in \cref{t26_0} below), we study measurability, integrability, and independence properties for the introduced MLP approximations (see \cref{ppt0}, \cref{h08}, and \cref{k08} in \cref{ppt00} below), and we establish in \cref{x01} in \cref{x010} below upper bounds for the $L^2$-distances between the exact solutions of the considered stochastic fixed point equations and the proposed MLP approximations. In our proof of \cref{x01} we employ certain function space-valued Gronwall-type inequalities, which we establish in \cref{g09-1}, \cref{g090a}, and \cref{g09} in \cref{subsec:gronwall} below. Our proof of \cref{g09-1} employs the well-known and elementary auxiliary results in \cref{lem:gronwall1}, \cref{lem:gronwall2}, and \cref{g09-2}. For completeness we include in this section also detailed proofs for \cref{lem:gronwall1}, \cref{lem:gronwall2}, and \cref{g09-2}.

\subsection{Mathematical description of MLP approximations}
\label{t26_0}

\fussy
\renewcommand{\lyaV}{\varphi}
\renewcommand{\fwpr}{Y^0}
\renewcommand{\growrate}{\rho}
\begin{setting}\label{t26}
Let $d\in \N$, 
$T, c, L,\growrate\in [0,\infty)$,  
 $\Delta=\{(t,s)\in [0,T]^2\colon  t\leq s\}$,
let
$\smallF \colon [0,T]\times \R^{d}\times\R\to\R$, $\funcG\colon\R^d\to\R $,
$\smallV \colon[0,T]\times\R^d\to\R$, and $\lyaV\colon\R^d\to(0,\infty)$
 be 
measurable, 
let
$\funcF\colon\R^{[0,T]\times\R^d}\to\R^{[0,T]\times\R^d}$
satisfy for all $t\in [0,T]$,
$x\in \R^d$,
$y,\mathfrak{y}\in \R$, $w\in \R^{[0,T]\times\R^d}$ that
\begin{align}
\left|\smallF(t,x,y)-\smallF(t,x,\mathfrak{y})\right|\leq \LipConstF|y-\mathfrak{y}|\qquad\text{and}\qquad
(\funcF(w))(t,x)= f(t,x,w(t,x)),\label{t01}
\end{align}
let
$(\Omega, \mathcal{F}, \P)$
be a probability space,
let 
$  \Theta = \bigcup_{ n \in \N }\! \Z^n$,
let $\unif^\theta\colon \Omega\to[0,1]$, $\theta\in \Theta$, be i.i.d.\ random variables, 
let $
\sppr^\theta = 
(\sppr^\theta_{t,s}(x,\omega))_{(t,s,x,\omega)\in\Delta\times\R^d\times\Omega}
\colon \Delta\times \R^d \times \Omega \to \R^d$, $\theta\in\Theta$, be 
measurable,
assume for all $t\in (0,1)$ that $\P(\unif^{0}\le t)=t$,
let $\uniform^\theta\colon [0,T]\times \Omega\to [0, T]$, $\theta\in\Theta$, satisfy 
for all $t\in [0,T]$, $\theta\in \Theta$ that 
$\uniform^\theta _t = t+ (T-t)\unif^\theta$,
assume for all $t\in[0,T]$, $s\in[t,T]$, $x\in\R^d$ that 
$\sppr^\theta_{t,s}(x)$, $\theta\in\Theta$, are i.i.d.,
assume that $( \unif^{ \theta } )_{ \theta \in\Theta}$ and $(\sppr^\theta_{t,s}(x))_{(\theta,t,s,x)\in\Theta \times \Delta\times \R^d}$ are independent,
let
$ 
  {\bigV}_{ n,M}^{\theta } \colon [0, T] \times \R^d \times \Omega \to \R
$, $n,M\in\Z$, $\theta\in\Theta$, 
satisfy 
for all $n\in \N_0$, $M\in \N$, $\theta\in\Theta $, 
$ t \in [0,T]$, $x\in\R^d $
that 
\begin{equation}
\begin{split}\label{t27}
  &{\bigV}_{n,M}^{\theta}(t,x)
=
  \frac{ \1_{ \N }( n )}{M^n}
 \sum_{i=1}^{M^n} 
      \funcG \big(\sppr^{(\theta,0,-i)}_{t,T}(x)\big)
 \\
&  +
  \sum_{\ell=0}^{n-1} \frac{(T-t)}{M^{n-\ell}}
    \left[\sum_{i=1}^{M^{n-\ell}}
      \big(\funcF({\bigV}_{\ell,M}^{(\theta,\ell,i)})-\1_{\N}(\ell)\funcF( {\bigV}_{\ell-1,M}^{(\theta,-\ell,i)})\big)
      \big(\uniform_t^{(\theta,\ell,i)},\sppr_{t,\uniform_t^{(\theta,\ell,i)}}^{(\theta,\ell,i)}(x)\big)
    \right],
\end{split}
\end{equation}
assume for all
$t\in [0,T]$, $s\in [t,T]$, $x\in \R^d$ that 
$
\E\big[|\funcG(\fwpr_{t,T}(x))|\big]+\int_t^T\E\big[| (\funcF(\smallV))(r,\fwpr_{t,r}(x))| \big]\,dr <\infty
$,
$\E\big[\lyaV(\fwpr _{t,s}(x))\big]\leq e^{\growrate(s-t)}\lyaV(x)$, $
|(\funcF(0))(t,x)|^2+|\funcG(x)|^2+ |\smallV(t,x)|^2\le c \lyaV(x)
$, and
\begin{align}
\smallV(t,x)=\E\!\left[\funcG(\fwpr_{t,T}(x))+\int_t^T \left(\funcF(\smallV)\right)\!\left(r,\fwpr_{t,r}(x)\right) dr \right].\label{t27b}
\end{align}
\end{setting}

\subsection{Measurability, integrability, and independence properties for MLP approximations}
\label{ppt00}
\sloppy
\begin{lemma}[Independence and distributional properties]
\label{ppt0}
Assume \cref{t26}. 
Then
\begin{enumerate}[(i)]
\item  \label{ppt1}
it holds 
for all $n \in \N_0$, $M \in \N$, $\theta\in\Theta $ that
$
  {\bigV}_{ n,M}^{\theta }
$ and $\funcF({\bigV}_{ n,M}^{\theta })$
are 
measurable,
\item  \label{ppt2}
it holds\footnote{Let $( \Omega, \mathcal F, \mathbb P )$ be a probability space, let $n \in \N$, 
and let $( S_k, \mathcal{S}_k )$, $k \in \{ 1, 2, ..., n \}$, be measurable spaces.
Note that for all 
$ X_k \colon \Omega \to S_k $, $ k \in \{ 1, 2, ..., n \} $,
it holds that 
$\mathfrak{S}( X_1, X_2, ..., X_n )$ is the smallest sigma-algebra 
on $\Omega$ with respect to which $X_1, X_2, ..., X_n$ are measurable.}
 for all $n \in \N_0$, $M \in \N$, $\theta \in \Theta$ that
\begin{align}\begin{split}
&\sigmaAlgebra\big( ( \bigV^{ \theta }_{ n, M }( t, x ) )_{ (t,x) \in [0,T] \times \R^d}\big)
\subseteq
  \sigmaAlgebra
\big( ( \unif^{(\theta,\vartheta)} )_{ \vartheta \in \Theta }, (  \sppr_{t,s}^{(\theta,\vartheta)}(x))_{ ( \vartheta, t, s, x ) \in \Theta \times \Delta \times \R^d } \big),
\end{split}
\end{align}

\item  \label{ppt3}
it holds 
for all $n \in \N_0$, $M \in \N$, $\theta\in\Theta $ that
$(\bigV_{ n,M}^{\theta}(t,x))_{(t,x)\in [0,T]\times\R^d}$, 
$(\sppr^\theta_{t,s}(x))_{(t,s,x)\in\Delta \times \R^d}$,  and $\unif^\theta$ are independent,
\item  \label{ppt4}
it holds
for all $n, m \in \N_0$, $M \in \N$, $i,j,k,\ell,\nu \in \Z$, $\theta \in \Theta$ with $(i,j) \neq (k,l)$ 
that
$
 (\bigV^{(\theta,i,j)}_{n,M}(t,x))_{(t,x)\in[0,T]\times\R^d}
$,
$ ( 
  \bigV^{(\theta,k,\ell)}_{m,M}(t,x))_{ (t,x)\in[0,T]\times\R^d}
$,
$ \unif^{(\theta,i,j)}$, and $(\sppr^{(\theta,i,j)}_{t,s}(x))_{(t,s,x)\in\Delta \times\R^d}$
are independent, and
\item  \label{ppt5}
it holds 
for all $n \in \N_0$, $M \in \N$, $t\in[0,T]$, $x\in\R^d$ that 
$  \bigV^\theta_{n, M}(t,x)$, $\theta \in \Theta$,
are identically distributed.
\end{enumerate}
\end{lemma}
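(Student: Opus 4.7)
\textbf{Proof proposal for Lemma \ref{ppt0}.}
The plan is to prove all five assertions simultaneously by induction on $n \in \N_0$, treating $M \in \N$ and $\theta \in \Theta$ as arbitrary throughout. The base case $n = 0$ is essentially free: by \eqref{t27} and the fact that $\1_\N(0) = 0$ together with the empty sum convention, $\bigV_{0,M}^\theta \equiv 0$. Measurability is trivial, and every independence or identical-distribution statement then holds vacuously.

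For the induction step, suppose \ref{ppt1}--\ref{ppt5} hold at every level $m \in \{0,1,\ldots,n-1\}$, and inspect the defining formula \eqref{t27}. The random object $\bigV_{n,M}^\theta$ is a finite arithmetic combination of (a) evaluations of $\funcG$ at the variables $\sppr^{(\theta,0,-i)}_{t,T}(x)$ and (b) evaluations of $\funcF(\bigV_{\ell,M}^{(\theta,\ell,i)})$ and $\funcF(\bigV_{\ell-1,M}^{(\theta,-\ell,i)})$ at points of the form $(\uniform_t^{(\theta,\ell,i)}, \sppr_{t,\uniform_t^{(\theta,\ell,i)}}^{(\theta,\ell,i)}(x))$. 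For \ref{ppt1}, measurability of $\bigV_{n,M}^\theta$ follows because $\funcG$, $\sppr^\bullet$, $\unif^\bullet$, and (by induction) the inner $\bigV_{\ell,M}^{(\theta,\ldots)}$ are measurable, and compositions/sums preserve measurability; measurability of $\funcF(\bigV_{n,M}^\theta)$ then follows from the joint measurability of $\smallF$ via $(\funcF(w))(t,x) = \smallF(t,x,w(t,x))$. For \ref{ppt2}, the induction hypothesis applied to $\bigV_{\ell,M}^{(\theta,\ell,i)}$ and $\bigV_{\ell-1,M}^{(\theta,-\ell,i)}$ shows that each inner approximation is $\sigmaAlgebra$-measurable with respect to $\unif^\bullet$ and $\sppr^\bullet$ indexed by extensions of $(\theta,\ell,i)$ and $(\theta,-\ell,i)$; combined with the additional factors $\sppr^{(\theta,0,-i)}$, $\unif^{(\theta,\ell,i)}$, $\sppr^{(\theta,\ell,i)}$ appearing explicitly in \eqref{t27}, the claimed sigma-algebra inclusion is immediate.

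For the independence statements \ref{ppt3} and \ref{ppt4}, the crucial structural observation is that the index set $\Theta = \bigcup_{n\in\N} \Z^n$ is a tree under concatenation, and the only $\unif^\vartheta$ and $\sppr^\vartheta$ entering $\bigV_{n,M}^\theta$ are those with $\vartheta$ a strict extension of $\theta$. Applied to \ref{ppt3}: $\bigV_{n,M}^\theta(t,x)$ is $\sigmaAlgebra$-measurable with respect to $\unif^{(\theta,\vartheta)}$ and $\sppr^{(\theta,\vartheta)}$ for strictly nonempty $\vartheta$, a family disjoint from the set $\{\theta\}$ used by $\unif^\theta$ and $\sppr^\theta$; the global i.i.d./independence hypothesis on these random objects then yields the desired independence. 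Applied to \ref{ppt4}: when $(i,j)\neq(k,\ell)$, the subtrees of $\Theta$ rooted at $(\theta,i,j)$ and $(\theta,k,\ell)$ are disjoint, so by \ref{ppt2} the sigma-algebras generated by $\bigV^{(\theta,i,j)}_{n,M}$, $\bigV^{(\theta,k,\ell)}_{m,M}$, $\unif^{(\theta,i,j)}$, and $\sppr^{(\theta,i,j)}$ are generated by disjoint subfamilies of the globally independent building blocks, hence independent.

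Finally, for \ref{ppt5}, I would use that the map $\Theta \ni \vartheta \mapsto (\theta,\vartheta)$ is a bijection onto its image and that $(\unif^\vartheta)_{\vartheta\in\Theta}$ are i.i.d., and similarly that, for each fixed $(t,s,x)$, the variables $\sppr^\vartheta_{t,s}(x)$, $\vartheta\in\Theta$, are i.i.d.\ and independent of the $\unif$'s. Since $\bigV_{n,M}^\theta(t,x)$ is built from the same measurable functional of the indexed building blocks regardless of $\theta$, a standard push-forward/change-of-index argument (together with the induction hypothesis for \ref{ppt5} at level $n-1$) shows that the law of $\bigV_{n,M}^\theta(t,x)$ does not depend on $\theta$. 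The main bookkeeping obstacle across the whole proof is making the ``disjoint subtree'' observation rigorous — i.e., cleanly describing which leaves of $\Theta$ each $\bigV_{n,M}^\theta$ depends upon — and then invoking a suitable grouping lemma for independent families; once that is in place, \ref{ppt1}--\ref{ppt5} follow essentially by unwinding \eqref{t27}.
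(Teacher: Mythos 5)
Your proposal is correct and follows essentially the same route as the paper: induction on $n$ using the recursion \eqref{t27}, the sigma-algebra inclusion \ref{ppt2} as the central bookkeeping device, and the ``disjoint subtree'' observation about $\Theta$ to derive the independence statements \ref{ppt3} and \ref{ppt4}, with the i.i.d.\ structure of the building blocks giving \ref{ppt5}. The only cosmetic difference is that the paper proves \ref{ppt1} and \ref{ppt2} by separate inductions and then obtains \ref{ppt3}--\ref{ppt4} directly, rather than carrying all five items through one simultaneous induction, and for \ref{ppt5} it cites a grouping lemma from the literature in place of your sketched ``push-forward/change-of-index argument.''
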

\fussy
\begin{proof}[Proof of Lemma~\ref{ppt0}]
Note that \eqref{t27}, \eqref{t01}, the assumption that for all
$M \in \N$, $\theta\in\Theta$ it holds that $\bigV^\theta_{0, M} = 0$,
the assumption that $\smallF$ is measurable,
the assumption that $\funcG$ is measurable,
the assumption that for all $\theta \in \Theta$ it holds that $\sppr^\theta$ is measurable,
the fact that for all $\theta \in \Theta$ it holds that $\uniform^\theta$ is measurable,
and induction establish \cref{ppt1}.
Next observe that \eqref{t01}, \cref{ppt1}, and the assumption that $\smallF$ is measurable assure that
for all $n \in \N_0$, $M \in \N$, $\theta\in\Theta $ it holds that 
$\funcF(\bigV^\theta_{n, M})$ is 
measurable. The assumption that for all
$M \in \N$, $\theta\in\Theta$ it holds that $\bigV^\theta_{0, M} = 0$,
\eqref{t27}, 
the assumption that 
for all $\theta \in \Theta$ it holds that 
$\sppr^\theta$ is 
measurable,
the fact that 
for all $\theta \in \Theta$ it holds that 
$\uniform^\theta$ is 
measurable, 
and induction hence prove \cref{ppt2}.
Furthermore, note that \cref{ppt2} and the fact that 
for all $\theta \in \Theta$ it holds that
$(\unif^{(\theta, \vartheta)})_{\vartheta \in \Theta}$,
$ (\sppr^{(\theta, \vartheta)}_{t,s}(x))_{(\vartheta,t,s,x) \in \Theta\Delta \times \R^d}$,
$(\sppr^\theta_{t,s}(x))_{(t,s,x)\in\Delta \times\R^d}$, and $\unif^\theta$
are independent establish \cref{ppt3}.
In addition, note that \cref{ppt2}, the fact that
for all $i,j \in \Z$, $\theta \in \Theta$ it holds that 
$\unif^{(\theta,i,j)}$ and 
$(\sppr^{(\theta,i,j)}_{t,s}(x))_{(t,s,x)\in\Delta \times\R^d}$ are independent,
and the fact that
for all $i,j,k,\ell \in \Z$, $\theta \in \Theta$ with $(i,j) \neq (k,\ell)$ it holds that 
\begin{align}
(
\unif^{(\theta,i,j, \vartheta)},\sppr^{(\theta,i,j, \vartheta)}_{t,s}(x))_{(\vartheta,t,s,x)\in\Theta\times \Delta \times \R^d}
\end{align}
and 
\begin{align}
(
\unif^{(\theta,k,\ell, \vartheta)},\sppr^{(\theta,k,\ell, \vartheta)}_{t,s}(x))_{(\vartheta,t,s,x)\in\Theta\times \Delta\times \R^d}
\end{align}
are independent prove \cref{ppt4}.
Furthermore, note that the assumption that for all
$M \in \N, \theta\in\Theta$ it holds that $\bigV^\theta_{0, M} = 0$,
the assumption that for all $t\in[0,T]$, $s\in[t,T]$, $x\in\R^d$ it holds that $\sppr^\theta_{t,s}(x)$, $\theta \in \Theta$, are i.i.d., 
the fact that for all $t\in[0,T]$ it holds that $\uniform^\theta_t$, $\theta \in \Theta$, are i.i.d., 
\cref{ppt1,ppt2,ppt3,ppt4}, induction, and,
 e.g.,  \cite[Lemma~2.4]{HJKNW2018} establish \cref{ppt5}.
The proof of Lemma~\ref{ppt0} is thus complete.
\end{proof}

\begin{lemma}[Integrability]\label{h08}
Assume \cref{t26}, let
 $M\in\N$, and let $\dim\colon \Theta\to\N$ satisfy for all $n\in\N$, $\theta\in\Z^n$ that $\dim(\theta)=n$. 
Then 
\begin{enumerate}[(i)]
\item \label{h08b}
it holds for all $t\in[0,T]$,
$\ell\in\N_0$,  $\eta,\mu,\nu\in\Theta$ with $\min\{\dim( \eta),\dim(\mu)\}\geq \dim(\nu)$
 that
\begin{align}\begin{split}
&\sup_{x\in\R^d}\left[
(\lyaV(x))^{-1}e^{\growrate t}
\E\!\left[\left|(T-t)\left( \funcF(\bigV^{\eta}_{\ell,M}) -\1_{\N}(\ell)\funcF (\bigV^{\mu}_{\ell-1,M})\right)\!(\uniform_t^\nu, \sppr^\nu_{t,\uniform^\nu_t}(x))\right|^2\right]
\right]^{\nicefrac{1}{2}}\\
&
\leq\left[ \sup_{s\in[0,T]}\sup_{x\in\R^d}
\Bigl(
\1_{\{0\}}(\ell){ (T-t)}
(\lyaV(x))^{-\nicefrac{1}{2}}e^{\growrate s/2}|(F(0))(s,x)|
\Bigr)
\right]\\
&
+
\left[
\int_{t}^{T}\sup_{r\in[s,T]}\sup_{x\in\R^d}\left[
\1_{\N}(\ell){ (T-t)^{\nicefrac{1}{2}}}\LipConstF
(\lyaV(x))^{-1}e^{\growrate r}
\E\!\left[
\left|
\bigV^{\eta}_{\ell,M}(r,x) -\bigV^{\nu}_{\ell-1,M}(r,x)\right|^2\right]\right]ds\right]^{\!\nicefrac{1}{2}},
\end{split}\end{align}
\item \label{h12d} it holds for all $\theta\in\Theta$ that
\begin{align}
\begin{split}
\sup_{t\in[0,T]}
\sup_{x\in\R^d}\left[
(\lyaV(x))^{-1}e^{\growrate t}\E\!\left[|
g(\sppr^\theta_{t,T}(x))|^2\right]\right ]\leq 
\sup_{x\in\R^d}\left[(\lyaV(x))^{-1}|g(x)|^2\right]e^{\growrate T}
<\infty,
\end{split}
\end{align}
\item\label{h08f} it holds for all  $n\in\N_0$, $\theta\in\Theta$, $\eta,\mu,\nu\in\Theta$ with  $\min\{\dim( \eta),\dim(\mu)\}\geq \dim(\nu)$
 that
\begin{align}\begin{split}
\sup_{t\in[0,T]}
\sup_{x\in\R^d}\left[
\frac{e^{\growrate t}}{\lyaV(x)}
\E\!\left[\bigl|(T-t)( \funcF(\bigV^{\eta}_{n,M}) -\1_{\N}(n)\funcF (\bigV^{\mu}_{n-1,M}))(\uniform_t^\nu, \sppr^\nu_{t,\uniform_t^\nu}(x))\bigr|^2\right]\right]^{\nicefrac{1}{2}}<\infty,
\end{split}\end{align}
\item\label{h08fb} it holds for all  $n\in\N_0$, $\theta\in\Theta$
 that
\begin{align}
\sup_{t\in[0,T]}\sup_{x\in\R^d}
\left[
(\lyaV(x))^{-1}e^{\growrate t}
\E\!\left[\left|\bigV^\theta_{n,M}(t,x)\right|^2\right]\right]^{\!\nicefrac{1}{2}}<\infty,
\end{align}
and
\item\label{h08g} it holds for all  $n\in\N_0$,  $\eta,\nu\in\Theta$ with  $\dim( \eta)\geq \dim(\nu)$
 that
\begin{align}\begin{split}
&
\sup_{t\in[0,T]}
\sup_{x\in\R^d}\left[
\frac{e^{\growrate t}}{\lyaV(x)}
\E\!\left[\bigl|(T-t)( \funcF(\bigV^{\eta}_{n,M}) )(\uniform_t^\nu, \sppr^\nu_{t,\uniform_t^\nu}(x))\bigr|^2\right]\right]^{\nicefrac{1}{2}}
\\
&=\sup_{t\in[0,T]}
\sup_{x\in\R^d}\left[
\frac{e^{\growrate t}(T-t)}{\lyaV(x)}\int_t^T
\E\!\left[\bigl|( \funcF(\bigV^{\eta}_{n,M}) )(s, \sppr^\nu_{t,s}(x))\bigr|^2\right]ds\right]^{\nicefrac{1}{2}}
<\infty.
\end{split}\end{align}

\end{enumerate}
\end{lemma}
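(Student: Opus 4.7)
The plan is to exploit the fact that $\unif^\nu$ is uniformly distributed on $[0,1]$ and independent of $(\sppr^\nu,\bigV^\eta,\bigV^\mu)$ to convert the squared expectation into a time integral, to combine this with the Lyapunov estimate $\E[\lyaV(\fwpr_{t,s}(x))] \le e^{\growrate(s-t)}\lyaV(x)$ and the Lipschitz bound on $\funcF$, and then to derive the finiteness assertions in items \eqref{h08f}--\eqref{h08g} by strong induction on $n$.

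For item \eqref{h08b}, the starting observation is that for every nonnegative measurable $h\colon [0,T]\times \R^d \to [0,\infty)$ one has
\begin{equation*}
\E\!\left[\left|(T-t) h\bigl(\uniform^\nu_t,\sppr^\nu_{t,\uniform^\nu_t}(x)\bigr)\right|^2\right]
= (T-t)\int_t^T \E\bigl[|h(s,\sppr^\nu_{t,s}(x))|^2\bigr]\,ds,
\end{equation*}
by conditioning on $\uniform^\nu_t$ and using the density of $\unif^\nu$. When $\ell = 0$, the integrand reduces to $|\funcF(0)(s,\sppr^\nu_{t,s}(x))|^2$ since $\bigV^\eta_{0,M} = 0$; the bound $|\funcF(0)(s,y)|^2 \le c\lyaV(y)$ combined with the Lyapunov estimate then yields the first summand on the right-hand side. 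When $\ell \ge 1$, I would invoke the Lipschitz property of $\smallF$ in its last argument to obtain $|\funcF(\bigV^\eta_{\ell,M}) - \funcF(\bigV^\mu_{\ell-1,M})| \le L|\bigV^\eta_{\ell,M} - \bigV^\mu_{\ell-1,M}|$, and then use parts \eqref{ppt4}--\eqref{ppt5} of \cref{ppt0} (independence of distinct index branches and identical distributions along $\Theta$) to replace $\bigV^\mu_{\ell-1,M}$ by $\bigV^\nu_{\ell-1,M}$ inside the expectation. A further application of the Lyapunov inequality together with a routine rearrangement of suprema and integrals then produces the second summand.

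Item \eqref{h12d} is an immediate consequence of $|\funcG(y)|^2 \le c\lyaV(y)$, the Lyapunov inequality applied at $s = T$, and the finiteness of $c$. For items \eqref{h08f}, \eqref{h08fb}, and \eqref{h08g} I would proceed by strong induction on $n\in\N_0$. The base case $n = 0$ is immediate since $\bigV^\theta_{0,M} = 0$ and since $\funcF(0)$ is controlled by the hypothesis $|\funcF(0)(t,x)|^2 \le c\lyaV(x)$. For the induction step, the defining identity \eqref{t27} expresses $\bigV^\theta_{n,M}(t,x)$ as a finite sum of terms of the form $\funcG(\sppr^\bullet_{t,T}(x))$ and $(T-t)(\funcF(\bigV^\bullet_{\ell,M}) - \1_\N(\ell)\funcF(\bigV^\bullet_{\ell-1,M}))(\uniform^\bullet_t,\sppr^\bullet_{t,\uniform^\bullet_t}(x))$ with $\ell \le n-1$; applying Minkowski's inequality in $L^2$ together with item \eqref{h12d} for the former and item \eqref{h08b} combined with the inductive hypothesis for the latter then establishes finiteness for $\bigV^\theta_{n,M}$. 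Item \eqref{h08g} is the same conditioning identity without the subtraction, and its finiteness follows from items \eqref{h08b} and \eqref{h08fb}.

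The main technical obstacle will be the justification in item \eqref{h08b} of the substitution $\bigV^\mu_{\ell-1,M} \rightsquigarrow \bigV^\nu_{\ell-1,M}$: under the mere assumption $\min\{\dim(\eta),\dim(\mu)\} \ge \dim(\nu)$ one must carefully trace which branches of the tree $\Theta$ carry each of the variables $\bigV^\eta,\bigV^\mu,\bigV^\nu,\sppr^\nu,\unif^\nu$ in order to invoke the correct combination of independence and identical distribution from \cref{ppt0}, and also to make sure the conditioning identity above really applies with the relevant joint distributions.
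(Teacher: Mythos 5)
Your overall architecture matches the paper's proof: you disintegrate the expectation over $\uniform^\nu_t$ and $\sppr^\nu$ (using the independence of the field $(\funcF(\bigV^{\eta}_{\ell,M})-\1_{\N}(\ell)\funcF(\bigV^{\mu}_{\ell-1,M}))$ from $(\unif^\nu,\sppr^\nu)$, which is exactly what the dimension hypothesis together with item \eqref{ppt2} of \cref{ppt0} delivers), pull out suprema weighted by $\lyaV$, apply the drift bound $\E[\lyaV(\sppr^\nu_{t,s}(x))]\le e^{\growrate(s-t)}\lyaV(x)$ and the Lipschitz property \eqref{t01}, and then prove \eqref{h08f} and \eqref{h08fb} by simultaneous induction on $n$ via \eqref{t27}, \eqref{h12d}, and \eqref{h08b}, with \eqref{h08g} obtained afterwards; this is the paper's argument almost verbatim.

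The genuine problem is the step you yourself single out as the main obstacle: replacing $\bigV^{\mu}_{\ell-1,M}$ by $\bigV^{\nu}_{\ell-1,M}$ inside the second moment by appealing to items \eqref{ppt4} and \eqref{ppt5} of \cref{ppt0}. This cannot work under the stated hypothesis: $\min\{\dim(\eta),\dim(\mu)\}\ge\dim(\nu)$ permits, e.g., $\mu=\eta$, or $\nu$ a prefix of $\eta$, and in such cases the joint law of the pair $(\bigV^{\eta}_{\ell,M}(r,x),\bigV^{\mu}_{\ell-1,M}(r,x))$ differs from that of $(\bigV^{\eta}_{\ell,M}(r,x),\bigV^{\nu}_{\ell-1,M}(r,x))$; item \eqref{ppt5} only gives equality of one-dimensional marginals across branches and item \eqref{ppt4} only concerns indices of the special form $(\theta,i,j)$ versus $(\theta,k,\ell)$, so neither yields the required equality (or even a one-sided comparison) of second moments of differences. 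The correct resolution is that no replacement is needed: after the Lipschitz step the bound naturally comes out with $\bigV^{\eta}_{\ell,M}-\bigV^{\mu}_{\ell-1,M}$, and the index $\nu$ in the displayed conclusion of \eqref{h08b} is evidently a misprint for $\mu$ — this is how the paper's own proof derives the estimate, and it is how the lemma is invoked in the proof of \cref{k01} (there $\eta=\nu=(0,\ell,1)$ and the resulting bound \eqref{k02} carries the index $\mu=(0,-\ell,1)$). For the finiteness assertions \eqref{h08f}--\eqref{h08g} the distinction is immaterial, so your induction goes through unchanged once the difference term is read with $\mu$. A further, minor point: for $\ell=0$ you should not insert $|(\funcF(0))(t,x)|^2\le c\,\lyaV(x)$ at this stage — doing so proves only a weaker bound with the constant $\sqrt{c}\,e^{\growrate T/2}$ in place of the weighted supremum of $|(\funcF(0))|$ that \eqref{h08b} asserts (and that the error recursion later uses); keep the supremum $\sup_{s,z} e^{\growrate s/2}|(\funcF(0))(s,z)|(\lyaV(z))^{-1/2}$ and use the crude bound by $c$ only to conclude finiteness in the base case of the induction.
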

\begin{proof}[Proof of \cref{h08}]
\sloppy
Observe that \cref{ppt2} in \cref{ppt0} and the assumption
that 
$( \unif^{ \theta } )_{ \theta \in\Theta}$ and $(\sppr^\theta_{t,s}(x))_{(\theta,t,s,x)\in\Theta \times \Delta\times \R^d}$ are independent
show that for all  $\ell\in\N_0$, $\eta,\mu,\nu\in\Theta$ with  $\min\{\dim( \eta),\allowbreak \dim(\mu)\}\geq \dim(\nu)$ it holds that
$((\funcF(\bigV_{\ell,M}^\eta)-\1_\N(\ell)\funcF(\bigV_{\ell-1,M}^\mu))(t,x))_{(t,x)\in[0,T]\times \R^d}$, $\unif^\nu$, and $(\sppr^\nu_{t,s}(x))_{(t,s,x)\in\Delta \times\R^d}$ are independent.
Combining \cref{ppt1} in \cref{ppt0}, the assumption 
that for all $\theta\in\Theta$ it holds that $\sppr^\theta$ is measurable, 
the fact that for all $\nu\in\Theta$, $r\in (0 ,1)$ it holds that $\P( \unif^\nu\le r)=r$,
and the fact that for all $t\in [0,T]$, $s\in [t,T]$, $x\in \R^d$ it holds that 
$\E\big[\lyaV(\fwpr _{t,s}(x))\big]\leq e^{\growrate(s-t)}\lyaV(x)$ with, e.g., \cite[Lemma 2.2]{HJKNW2018} therefore implies that for all $\ell\in\N_0$, $t\in[0,T)$, $x\in\R^d$,  $\eta,\mu,\nu\in\Theta$ with $\min\{\dim( \eta),\dim(\mu)\}\geq \dim(\nu)$ it holds that
\begin{align}
\begin{split}
& \E\!\left[\left|(T-t)\left( \funcF(\bigV^{\eta}_{\ell,M}) -\1_{\N}(\ell)\funcF (\bigV^{\mu}_{\ell-1,M})\right)(\uniform_t^\nu, \sppr^\nu_{t,\uniform^\nu_t}(x))\right|^2\right]\\
&=\frac{1}{T-t}\int_{t}^{T}\E\!\left[\left.\E\!\left[\left|(T-t)\left( \funcF(\bigV^{\eta}_{\ell,M}) -\1_{\N}(\ell)\funcF (\bigV^{\mu}_{\ell-1,M})\right)(s, z)\right|^2\right]\right|_{z=\sppr^\nu_{t,s}(x)}\right]ds\\
&=(T-t)\int_{t}^{T}\E\!\left[\left.\E\!\left[\left|\left( \funcF(\bigV^{\eta}_{\ell,M}) -\1_{\N}(\ell)\funcF (\bigV^{\mu}_{\ell-1,M})\right)(s, z)\right|^2\right]\right|_{z=\sppr^\nu_{t,s}(x)}\right]ds,
\end{split}
\end{align}
\begin{align}\label{h09}
\E\!\left[\bigl|(T-t)( \funcF(\bigV^{\eta}_{n,M}) )(\uniform_t^\nu, \sppr^\nu_{t,\uniform_t^\nu}(x))\bigr|^2\right]={(T-t)}\int_t^T
\E\!\left[\bigl|( \funcF(\bigV^{\eta}_{n,M}) )(s, \sppr^\nu_{t,s}(x))\bigr|^2\right]ds,
\end{align}
and
\begin{align}\begin{split}
& \E\!\left[\left|(T-t)\left( \funcF(\bigV^{\eta}_{\ell,M}) -\1_{\N}(\ell)\funcF (\bigV^{\mu}_{\ell-1,M})\right)(\uniform_{t}^\nu, \sppr^\nu_{t,\uniform^\nu_{t}}(x))\right|^2\right]\\
&\leq (T-t)\int_{t}^{T}\E\!\left[\left(
\sup_{r\in[s,T],z\in\R^d}\biggl[ \frac{e^{\growrate r}}{\lyaV(z)}\E\!\left[\left|
 \funcF(\bigV^{\eta}_{\ell,M}) -\1_{\N}(\ell)\funcF (\bigV^{\mu}_{\ell-1,M})(r,z)\right|^2\right]
\biggr]\right)
\frac{\lyaV(\sppr^\nu_{t,s}(x))}{e^{\growrate s}} \right]ds\\
&\leq (T-t)\int_{t}^{T}\left(
\sup_{r\in[s,T],z\in\R^d}\biggl[ \frac{e^{\growrate r}}{\lyaV(z)}\E\!\left[\left|
 \funcF(\bigV^{\eta}_{\ell,M}) -\1_{\N}(\ell)\funcF (\bigV^{\mu}_{\ell-1,M})(r,z)\right|^2\right]
\biggr]\right)
\frac{e^{\growrate (s-t)}
\lyaV(x)}{
e^{\growrate s}}\,ds
\\&= e^{-\growrate t}
\lyaV(x)
(T-t)^2
\left[\frac{1}{T-t}\int_{t}^{T}
\sup_{r\in[s,T],z\in\R^d}\biggl[\frac{e^{\growrate r}}{\lyaV(z)}\E\!\left[\left|
 \funcF(\bigV^{\eta}_{\ell,M}) -\1_{\N}(\ell)\funcF (\bigV^{\mu}_{\ell-1,M})(r,z)\right|^2\right]
\biggr]
\,ds\right].
\end{split}\end{align}
This, the fact that $\forall\,\eta\in\Theta,t\in[0,T],x\in\R^d\colon\bigV^{\eta}_{0,M}(t,x)=0 $, and
 \eqref{t01}  imply that
for all $\ell\in\N_0$, $t\in[0,T)$,  $\eta,\mu,\nu\in\Theta$ with $\min\{\dim( \eta),\dim(\mu)\}\geq \dim(\nu)$ it holds that
\begin{align}\begin{split}
&\sup_{x\in\R^d}\biggl[
(\lyaV(x))^{-1}e^{\growrate t}
\E\!\left[\left|(T-t)\left( \funcF(\bigV^{\eta}_{\ell,M}) -\1_{\N}(\ell)\funcF (\bigV^{\mu}_{\ell-1,M})\right)\!(\uniform_t^\nu, \sppr^\nu_{t,\uniform^\nu_t}(x))\right|^2\right]
\biggr]^{\nicefrac{1}{2}}
\\
&
\leq{ (T-t)}\left[ \sup_{s\in[0,T]}\sup_{x\in\R^d}
\Bigl(
\1_{\{0\}}(\ell)
(\lyaV(x))^{-\nicefrac{1}{2}}e^{\growrate s/2}|(F(0))(s,x)|
\Bigr)
\right]\\
&
+
{ (T-t)}
\left[
\frac{1}{T-t}
\int_{t}^{T}\sup_{r\in[s,T]}\sup_{x\in\R^d}\left[
\1_{\N}(\ell)\LipConstF
(\lyaV(x))^{-1}e^{\growrate r}
\E\!\left[
\left|
(\bigV^{\eta}_{\ell,M} -\bigV^{\nu}_{\ell-1,M})(r,x)\right|^2\right]\right]ds\right]^{\!\nicefrac{1}{2}}
\\
&
={ (T-t)}\left[ \sup_{s\in[0,T]}\sup_{x\in\R^d}
\Bigl(
\1_{\{0\}}(\ell)
(\lyaV(x))^{-\nicefrac{1}{2}}e^{\growrate s/2}|(F(0))(s,x)|
\Bigr)
\right]\\
&
+
{ (T-t)^{\nicefrac{1}{2}}}
\left[
\int_{t}^{T}\sup_{r\in[s,T]}\sup_{x\in\R^d}\left[
\1_{\N}(\ell)\LipConstF
(\lyaV(x))^{-1}e^{\growrate r}
\E\!\left[
\left|
(\bigV^{\eta}_{\ell,M} -\bigV^{\nu}_{\ell-1,M})(r,x)\right|^2\right]\right]ds\right]^{\!\nicefrac{1}{2}}
.
\end{split}\end{align}
This establishes \cref{h08b}.
Next observe that
the fact that for all $t\in[0,T] $, $s\in[t,T]$, $x\in\R^d$ it holds that
$\sppr^\theta_{t,s}(x)$, $\theta\in\Theta$, are identically distributed and the fact that for all $t\in [0,T]$, $s\in [t,T]$, $x\in \R^d$ it holds that
$\E\big[\lyaV(\fwpr _{t,s}(x))\big]\leq e^{\growrate(s-t)}\lyaV(x)$ imply that for all
$\theta\in\Theta$,
 $t\in[0,T]$, $x\in\R^d$ it holds that
\begin{align}\begin{split}
\E\!\left[\left|
g(\sppr^\theta_{t,T}(x))\right|^2\right]&=
\E\!\left[\left|
g(\sppr^0_{t,T}(x))\right|^2\right]\leq
\E\!\left[ \sup_{z\in\R^d}\left[{|g(z)|^2}/{\lyaV(z)}\right]
\left|
\lyaV(\sppr^0_{t,T}(x))\right|\right]\\&\leq 
\sup_{z\in\R^d}\left[{|g(z)|^2}/{\lyaV(z)}\right]e^{\growrate(T-t)}\lyaV(x).\end{split}
\end{align}
This 
and the fact that for all $y\in \R^d$ it holds that $|g(y)|^2\le C\lyaV(y)$ imply that for all $\theta\in\Theta$ it holds that
\begin{align}\label{h12c}
\begin{split}
\sup_{t\in[0,T]}
\sup_{x\in\R^d}\left[
(\lyaV(x))^{-1}e^{\growrate t}(\E\!\left[\left|
g(\sppr^\theta_{t,T}(x))\right|^2\right]\right ]\leq 
\sup_{z\in\R^d}\left[{|g(z)|^2}/{\lyaV(z)}\right]e^{\growrate T}
<\infty. 
\end{split}\end{align}
This establishes \cref{h12d}.
In the next step we prove \cref{h08f,h08fb} by induction on $ n \in \N_0 $. 
The fact that
$\forall\, \theta\in\Theta\colon \bigV^\theta_{0,M}=\bigV^\theta_{-1,M}=0$, \cref{h08b}, and the fact that for all $t\in [0,T]$, $x\in \R^d$ it holds that $|( \funcF(0))(t,x)|^2\le C\lyaV(x)$
show that for all $\nu\in\Theta$ it holds that
\begin{align}\small\begin{split}
\sup_{t\in[0,T]}
\sup_{x\in\R^d}\Bigg[
(\lyaV(x))^{-1}e^{\growrate t}
\E\!\left[\left|(T-t)( \funcF(0))(\uniform_t^\nu, \sppr^\nu_{t,\uniform_t^\nu}(x))\right|^2\right]\Bigg]^{\!\nicefrac{1}{2}}<\infty.
\end{split}\end{align}
This establishes \cref{h08f,h08fb} in the base case $n = 0$. For the induction step $\N_0\ni (n-1)\induct n \in\N$
let $n \in \N$ satisfy for all
 $\ell\in [0,n-1]\cap\N_0$,  $\theta\in\Theta$, $\eta,\mu,\nu\in\Theta$ with  $\min\{\dim( \eta),\dim(\mu)\}\geq \dim(\nu)$
that 
\begin{align}\small\begin{split}
\sup_{t\in[0,T]}
\sup_{x\in\R^d}\Bigg[
(\lyaV(x))^{-1}e^{\growrate t}
\E\!\left[\left|(T-t)( \funcF(\bigV^{\eta}_{\ell,M}) -\1_{\N}(\ell)\funcF (\bigV^{\mu}_{\ell-1,M}))(\uniform_t^\nu, \sppr^\nu_{t,\uniform_t^\nu}(x))\right|^2\right]\Bigg]^{\!\nicefrac{1}{2}}<\infty\label{h12}
\end{split}\end{align}and
\begin{align}
\sup_{t\in[0,T]}\sup_{x\in\R^d}
\left[
(\lyaV(x))^{-1}e^{\growrate t}
\E\!\left[\left|\bigV^\theta_{\ell,M}(t,x)\right|^2\right]\right]^{\!\nicefrac{1}{2}}<\infty.\label{h12b}
\end{align} 
Observe that the triangle inequality, \eqref{t27}, \cref{h12d}, and \eqref{h12}  imply that for all
$\theta\in\Theta$ it holds that
\begin{align}\label{h13}
\sup_{t\in[0,T]}
\sup_{x\in\R^d}
\left[
(\lyaV(x))^{-1}e^{\growrate t}
\E\!\left[\left|\bigV^\theta_{n,M}(t,x)\right|^2\right]\right]^{\!\nicefrac{1}{2}}<\infty.\end{align}
Combining \cref{h08b} and
 \eqref{h12b} with the fact that $n\in\N$
hence shows that
for all 
$\eta,\mu,\nu\in\Theta$ with  $\min\{\dim( \eta),\dim(\mu)\}\geq \dim(\nu)$ it holds that
\begin{align}\small\begin{split}
&\sup_{t\in[0,T]}\sup_{x\in\R^d}\biggl[
e^{\growrate t}
(\lyaV(x))^{-1}
\E\!\left[\left|(T-t)\left( \funcF(\bigV^{\eta}_{n,M}) -\1_{\N}(n)\funcF (\bigV^{\mu}_{n-1,M})\right)\!(\uniform_t^\nu, \sppr^\nu_{t,\uniform^\nu}(x))\right|^2\right]
\biggr]^{\nicefrac{1}{2}}\\
&\leq T\LipConstF
\left[\sup_{s\in[0,T]}\sup_{x\in\R^d}\left[(\lyaV(x))^{-1}e^{\growrate s}
\E\!\left[
\left|(
\bigV^{\eta}_{n,M} -\bigV^{\nu}_{n-1,M})(s,x)\right|^2\right]\right]\right]^{\!\nicefrac{1}{2}}<\infty.
\end{split}\end{align}
Induction, \eqref{h12b}, and \eqref{h13} hence establish \cref{h08f,h08fb}.
Next observe that the triangle inequality and \eqref{h12} ensure that for all $n\in\N_0$,  $\eta,\nu\in\Theta$ with $\dim( \eta)\geq \dim(\nu)$ it holds that
\begin{align}\begin{split}
&
\sup_{t\in[0,T]}
\sup_{x\in\R^d}\left[
\frac{e^{\growrate t}}{\lyaV(x)}
\E\!\left[\bigl|(T-t)( \funcF(\bigV^{\eta}_{n,M}) )(\uniform_t^\nu, \sppr^\nu_{t,\uniform_t^\nu}(x))\bigr|^2\right]\right]^{\nicefrac{1}{2}}
\\
&\leq 
\sum_{l=0}^{n}\sup_{t\in[0,T]}
\sup_{x\in\R^d}\left[
\tfrac{e^{\growrate t}(T-t)^2}{\lyaV(x)}
\E\!\left[\bigl|( \funcF(\bigV^{\eta}_{l,M}) -
\1_{\N}(l)\funcF(\bigV^{\eta}_{l-1,M}) ))
(\uniform_t^\nu, \sppr^\nu_{t,\uniform_t^\nu}(x))\bigr|^2\right]\right]^{\nicefrac{1}{2}}<\infty.
\end{split}\end{align}
This and \eqref{h09} establish \cref{h08g}.
The proof of \cref{h08} is thus complete.
\end{proof}

\fussy
%


\begin{lemma}[Expectations of approximations]\label{k08}Assume \cref{t26} and let $\theta\in\Theta$. Then
\begin{enumerate}[(i)]
\item\label{k05} it holds for all  $\ell\in\N_0$, $t\in[0,T]$, $x\in\R^d $ that
$
\big(\funcF({\bigV}_{\ell,M}^{({\theta},\ell,i)})-\1_{\N}(\ell)\funcF( {\bigV}_{\ell-1,M}^{({\theta},-\ell,i)})\big)
      \big(\uniform_t^{({\theta},\ell,i)},\\ \sppr_{t,\uniform_t^{({\theta},\ell,i)}}^{({\theta},\ell,i)}(x)\big)$,
       $i\in\N$, are i.i.d.\ and
\item\label{k07} it holds for all $n\in\N$, $t\in[0,T]$, $x\in\R^d$ that
\begin{equation}\label{eq:expectations}
\begin{split}
  \E \!\left[{\bigV}_{n,M}^{{\theta}}(t,x)\right]
&= \E\!\left[\funcG\left( \sppr^{\theta}_{t,T}(x)\right)\right]+
(T-t) \E\!\left[ \left(\funcF( \bigV_{n-1,M}^{\theta})\right)\!\left(\uniform_t^\theta,
\sppr^{\theta}_{t,\uniform_t^\theta}(x)\right)\right]\\
&= \E\!\left[\funcG\left( \sppr^{\theta}_{t,T}(x)\right)\right]+
\int_{t}^{T} \E\!\left[ \left(\funcF( \bigV_{n-1,M}^{\theta})\right)\!\!\left(s,
\sppr^\theta_{t,s}(x)\right)\right]ds.
\end{split}
\end{equation}
\end{enumerate}
\end{lemma}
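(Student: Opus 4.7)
The proof is essentially a bookkeeping exercise that combines the independence and distributional identities established in \cref{ppt0} with the integrability bounds of \cref{h08}. In particular, \cref{h08} ensures that every expectation and integral appearing below is finite, so Fubini's theorem and linearity of expectation apply without further ado.

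For \cref{k05} I would fix $\ell\in\N_0$, $t\in[0,T]$, $x\in\R^d$, and consider, for each $i\in\N$, the random variable $Z_i = \big(\funcF(\bigV^{(\theta,\ell,i)}_{\ell,M}) - \1_\N(\ell)\funcF(\bigV^{(\theta,-\ell,i)}_{\ell-1,M})\big)\big(\uniform^{(\theta,\ell,i)}_t, \sppr^{(\theta,\ell,i)}_{t,\uniform^{(\theta,\ell,i)}_t}(x)\big)$. Since the indices $(\theta,\ell,i)$ and $(\theta,-\ell,i)$ for distinct $i$ differ in the last coordinate, \cref{ppt4} in \cref{ppt0} shows that, across $i$, the quadruples $(\bigV^{(\theta,\ell,i)}_{\ell,M}, \bigV^{(\theta,-\ell,i)}_{\ell-1,M}, \unif^{(\theta,\ell,i)}, \sppr^{(\theta,\ell,i)})$ are mutually independent. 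Combining this with \cref{ppt5} (identical distribution across the $\Theta$-index) and the i.i.d.\ hypotheses on $(\unif^\theta)_{\theta\in\Theta}$ and $(\sppr^\theta)_{\theta\in\Theta}$ shows that each $Z_i$ is obtained by applying the \emph{same} measurable functional to \emph{independent, identically distributed} inputs, so $(Z_i)_{i\in\N}$ is i.i.d., establishing \cref{k05}.

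For \cref{k07} I would take expectations in the defining identity \eqref{t27}. For fixed $n\in\N$ the first block contributes $\E[\funcG(\sppr^\theta_{t,T}(x))]$, since \cref{ppt5} implies that all $M^n$ summands $\funcG(\sppr^{(\theta,0,-i)}_{t,T}(x))$ are identically distributed. For each $\ell\in\{0,1,\ldots,n-1\}$, \cref{k05} together with \cref{ppt5} lets me replace the average over $i$ by the common expectation, yielding $(T-t)\big(\E[(\funcF(\bigV^\theta_{\ell,M}))(\uniform^\theta_t, \sppr^\theta_{t,\uniform^\theta_t}(x))] - \1_\N(\ell)\,\E[(\funcF(\bigV^\theta_{\ell-1,M}))(\uniform^\theta_t, \sppr^\theta_{t,\uniform^\theta_t}(x))]\big)$. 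Summing these differences over $\ell=0,1,\ldots,n-1$ telescopes (the factor $\1_\N(\ell)$ suppresses the spurious $\ell=0$ subtraction that would otherwise involve $\bigV^\theta_{-1,M}$), leaving exactly $(T-t)\,\E[(\funcF(\bigV^\theta_{n-1,M}))(\uniform^\theta_t, \sppr^\theta_{t,\uniform^\theta_t}(x))]$. This yields the first equality in \eqref{eq:expectations}.

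To pass from the first to the second expression in \eqref{eq:expectations} I would condition on $\unif^\theta$: by \cref{ppt3}, $\bigV^\theta_{n-1,M}$, $\sppr^\theta$, and $\unif^\theta$ are jointly independent, $\unif^\theta$ is uniform on $[0,1]$, and $\uniform^\theta_t = t+(T-t)\unif^\theta$. Disintegrating the expectation with respect to $\unif^\theta$ and performing the change of variables $s = t+(T-t)u$ on $[0,1]$ (cf.\ the analogous use of \cite[Lemma~2.2]{HJKNW2018} in the proof of \cref{h08}) converts the prefactor $(T-t)$ into $\int_t^T\!\cdot\,ds$ and produces the integral representation. The only tedium, rather than any real obstacle, lies in confirming integrability at each step, and this is precisely what \cref{h08} supplies, thereby completing the proof.
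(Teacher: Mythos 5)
Your plan follows the paper's proof essentially step for step: establish (i) via the independence and distributional structure of the MLP inputs, then use (i), \cref{ppt5}, the telescoping structure, and a disintegration argument (the analogue of \cite[Lemma~2.2]{HJKNW2018}) to prove (ii), with integrability at each stage supplied by \cref{h08}. One citation imprecision is worth correcting: in part (i) you invoke \cref{ppt4} for the mutual independence across $i\in\N$ of the quadruples $\bigl(\bigV^{(\theta,\ell,i)}_{\ell,M}, \bigV^{(\theta,-\ell,i)}_{\ell-1,M}, \unif^{(\theta,\ell,i)}, \sppr^{(\theta,\ell,i)}\bigr)$, but \cref{ppt4} only asserts independence of a single four-object collection for one fixed pair of distinct indices $(i,j)\neq(k,\ell)$; it does not by itself yield joint independence of the entire family as $i$ ranges over $\N$. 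What the paper uses instead is \cref{ppt2}: each $\bigV^{(\theta,\pm\ell,i)}$ is measurable with respect to the sigma-algebra generated by the sub-family $(\unif^{(\theta,\pm\ell,i,\vartheta)}, \sppr^{(\theta,\pm\ell,i,\vartheta)})_{\vartheta\in\Theta}$, and these sub-families are pairwise disjoint across $i$; combining this containment with the assumed independence of $(\unif^\nu)_{\nu\in\Theta}$, of $(\sppr^\nu)_{\nu\in\Theta}$, and between the two families produces the mutual independence you need. With that correction, your argument matches the paper's.
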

\begin{proof}[Proof of \cref{k08}]
Observe that
\cref{ppt2} in \cref{ppt0} shows that for all 
$i\in\N$, $\ell\in\N_0$ it holds that
\begin{align}\begin{split}
&\sigmaAlgebra\!\left(
({\bigV}_{\ell,M}^{({\theta},\ell,i)}(t,x))_{(t,x)\in [0,T]\times \R^d}
\right)
\subseteq  
\sigmaAlgebra\!\left(
(\unif^{(\theta, \ell,i,\vartheta)})_{\vartheta\in\Theta},  (\sppr^{(\theta, \ell,i,\vartheta)}_{s,t}(x))_{(\vartheta, s,t,x)\in\Theta\times \Delta \times \R^d}
\right)
\end{split}\label{k03}
\end{align}
and
\begin{align}\begin{split}\label{k04}
&\sigmaAlgebra\!\left(
({\bigV}_{\ell-1,M}^{({\theta},-\ell,i)}(t,x))_{(t,x)\in [0,T]\times \R^d}
\right)
\subseteq  \sigmaAlgebra\!\left(
(\unif^{(\theta,- \ell,i,\vartheta)})_{\vartheta\in\Theta},
(\sppr^{(\theta,- \ell,i,\vartheta)}_{s,t}(x))_{(\vartheta,s,t,x)\in\Theta\times \Delta\times \R^d}\right).
\end{split}
\end{align}
Combining the fact that $\unif^\nu$, $\nu\in \Theta$, are independent and the fact that
for all $t\in[0,T]$, $s\in[t,T]$, $x\in\R^d$ it holds that
$\sppr^\nu_{t,s}(x)$, $\nu\in\Theta$, are independent
hence assures
  that
 for all $\ell\in\N_0$, $t\in[0,T]$, $x\in\R^d$ it holds that
$
\bigl(\funcF\bigl({\bigV}_{\ell,M}^{({\theta},\ell,i)}\bigr)-\1_{\N}(\ell)\funcF\bigl( {\bigV}_{\ell-1,M}^{({\theta},-\ell,i)}\bigr)\bigr)
      \bigl(\uniform_t^{({\theta},\ell,i)},\sppr_{t,\uniform_t^{({\theta},\ell,i)}}^{({\theta},\ell,i)}(x)\bigr)$, $i\in\N$, are independent.
Moreover, observe that \eqref{k04}, the fact that $\unif^\nu$, $\nu\in \Theta$, are independent, the fact that
for all $t\in[0,T]$, $s\in[t,T]$, $x\in\R^d$ it holds that
$\sppr^\nu_{t,s}(x)$, $\nu\in\Theta$, are independent, 
and the fact that $ (\unif^{ \nu } )_{ \nu \in\Theta}$ and $(\sppr^\nu_{t,s}(x))_{(\nu,t,s,x)\in\Theta \times \Delta\times \R^d}$ are independent
demonstrate that
 for all $\ell\in\N_0$, $i\in\N$, $t\in[0,T]$, $x\in\R^d$ it holds that
$
{\bigV}_{\ell,M}^{({\theta},\ell,i)}$, ${\bigV}_{\ell-1,M}^{({\theta},-\ell,i)}$, $\unif^{({\theta},\ell,i)}$, and $\sppr^{({\theta},\ell,i)}$ are independent.
\sloppy
Combining this and, e.g., the disintegration-type result in \cite[Lemma~2.2]{HJKNW2018}
establishes \cref{k05}.
Next note that \cref{ppt3} in \cref{ppt0} implies that
 for all
$\ell\in\N_0$,
$ k \in \{ 0, 1 \} $
it holds
that 
$(
\bigV_{\ell-k,M}^{\theta}(t,x))_{(t,x)\in[0,T]\times\R^d}$, $\unif^{\theta}$, 
and $( \sppr^{\theta}_{t,s}(x))_{(t,s,x)\in \{(r,u)\in [0,T]^2: u\in [r,T]\}\times \R^d}$ are independent.
Combining 
the fact that for all $\ell\in\N_0$, $i\in\N$, $t\in[0,T]$, $s\in[t,T]$, $x\in\R^d$ it holds that $\bigV^{(\theta,\ell,i)}(t,x)$ and $\bigV^{\theta}(t,x)$ are identically distributed (see \cref{ppt5} in \cref{ppt0}),
the fact that for all $\ell\in\N_0$, $i\in\N$ it holds that 
 $\unif^{(\theta,\ell,i)}$ and $\unif^{\theta}$ are identically distributed, 
 the fact that for all $\ell\in\N_0$, $i\in\N$, $t\in[0,T]$, $s\in[t,T]$, $x\in\R^d$ it holds that
  $\sppr^{(\theta,\ell,i)}_{t,s}(x)$ and $\sppr^{\theta}_{t,s}(x)$ are identically distributed, \cref{h08f} in \cref{h08}, and, 
e.g.,
the disintegration-type result in \cite[Lemma~2.2]{HJKNW2018}
 hence proves that for all $i\in\N$, $\ell\in\N_0$, $t\in [0,T]$, $x\in\R^d$ it holds that
\begin{align}\small\begin{split}
&\E\!\left[
      \left(\funcF\bigl({\bigV}_{\ell,M}^{({\theta},\ell,i)}\bigr)-\1_{\N}(\ell)\funcF\bigl( {\bigV}_{\ell-1,M}^{({\theta},-\ell,i)}\bigr)\right)\!\!
      \left(\uniform_t^{({\theta},\ell,i)},\sppr_{t,\uniform_t^{({\theta},\ell,i)}}^{({\theta},\ell,i)}(x)\right)\right]\\
&
=\E\!\left[
      \left(\funcF\bigl({\bigV}_{\ell,M}^{({\theta},\ell,i)}\bigr)\right)\!\!
      \left(\uniform_t^{({\theta},\ell,i)},\sppr_{t,\uniform_t^{({\theta},\ell,i)}}^{({\theta},\ell,i)}(x)\right)\right]
-\1_{\N}(\ell)\E\!\left[
      \left(\funcF\bigl( {\bigV}_{\ell-1,M}^{({\theta},-\ell,i)}\bigr)\right)\!\!
      \left(\uniform_t^{({\theta},\ell,i)},\sppr_{t,\uniform_t^{({\theta},\ell,i)}}^{({\theta},\ell,i)}(x)\right)\right]\\
&=\E\!\left[
      \left(\funcF({\bigV}_{\ell,M}^{\theta})\right)\!\!
      \left(\uniform_t^{\theta},\sppr_{t,\uniform_t^{\theta}}^{\theta}(x)\right)\right]
-\1_{\N}(\ell)
\E\!\left[
      \left(\funcF\bigl( {\bigV}_{\ell-1,M}^{\theta}\bigr)\right)\!\!
      \left(\uniform_t^{\theta},\sppr_{t,\uniform_t^{\theta}}^{\theta}(x)\right)\right].
\end{split}
\end{align}
The assumption that for all $t\in[0,T] $, $s\in[t,T]$, $x\in\R^d$ it holds that
$\sppr^\vartheta_{t,s}(x)$, $\vartheta\in\Theta$, are identically distributed, 
\cref{h08f} in \cref{h08},
\cref{ppt3} in \cref{ppt0},
the fact that
for all $t\in[0,T)$ it holds that $\uniform_t$ is continuous uniformly distributed on $[t,T]$,
and, e.g., the disintegration-type result in \cite[Lemma~2.2]{HJKNW2018}
 therefore imply that for all $n\in\N$, $t\in[0,T]$, $x\in\R^d$ it holds that
\begin{equation}
\begin{split}
  &\E \!\left[{\bigV}_{n,M}^{{\theta}}(t,x)\right]
=
  \frac{1}{M^n}
 \sum_{i=1}^{M^n} 
     \E\!\left[ \funcG \!\left(\sppr^{({\theta},0,-i)}_{t,T}(x)\right)\right]
 \\
& \quad  +
  \sum_{\ell=0}^{n-1} \frac{(T-t)}{M^{n-\ell}}
    \left[\sum_{i=1}^{M^{n-\ell}}
\E\!\left[
      \left(\funcF\bigl({\bigV}_{\ell,M}^{({\theta},\ell,i)}\bigr)-\1_{\N}(\ell)\funcF\bigl( {\bigV}_{\ell-1,M}^{({\theta},-\ell,i)}\bigr)\right)\!\!
      \left(\uniform_t^{({\theta},\ell,i)},\sppr_{t,\uniform_t^{({\theta},\ell,i)}}^{({\theta},\ell,i)}(x)\right)\right]
    \right]\\
\end{split}
\end{equation}and
\begin{align}\begin{split}
&\E \!\left[{\bigV}_{n,M}^{{\theta}}(t,x)\right]
=
     \E\!\left[ \funcG \!\left(\sppr^{{\theta}}_{t,T}(x)\right)\right]\\
&\quad
    +(T-t)
      \sum_{\ell=0}^{n-1}\Biggl[
\E\!\left[
      \left(\funcF({\bigV}_{\ell,M}^{{\theta}})\right)\!\!
      \left(\uniform_t^{\theta},\sppr_{t,\uniform_t^{\theta}}^{\theta}(x)\right)\right]
-\1_{\N}(\ell)
\E\!\left[
      \left(\funcF\bigl( {\bigV}_{\ell-1,M}^{\theta}\bigr)\right)\!\!
      \left(\uniform_t^{\theta},\sppr_{t,\uniform_t^{\theta}}^{\theta}(x)\right)\right]\Biggr]\\
&= \E\!\left[\funcG\left( \sppr^{\theta}_{t,T}(x)\right)\right]+
(T-t) \E\!\left[ \left(\funcF( \bigV_{n-1,M}^{\theta})\right)\!\!\left(\uniform_t^\theta,
\sppr^{\theta}_{t,\uniform_t^\theta}(x)\right)\right]\\
&=\E\!\left[\funcG\left( \sppr^{\theta}_{t,T}(x)\right)\right]+
\int_{t}^{T} \E\!\left[ \left(\funcF( \bigV_{n-1,M}^{\theta})\right)\!\!\left(s,
\sppr^{\theta}_{t,s}(x)\right)\right]ds.\end{split}
\end{align}
This establishes \cref{k07}. The proof of \cref{k08} is thus complete.
\fussy
\end{proof}
\subsection{Recursive error bounds for MLP approximations}

\begin{lemma}[Error recursion]\label{k01}Assume \cref{t26} and let $n,M\in\N$,  $t\in[0,T]$. Then
\begin{align}\begin{split}\label{k01_b}
&\sup_{x\in\R^d}
\Bigl[
e^{\growrate t}\lyaV(x)^{-1}\E\!\left[\left|{\bigV}_{n,M}^{0}(t,x)-\smallV(t,x)\right|^2\right]\Bigr]^{\!\nicefrac{1}{2}}\\&\leq
\frac{2e^{\growrate T/2}}{\sqrt{M^n}}\sup_{s\in[0,T]}\sup_{z\in\R^d}\left[\frac{
\max\{|T(\funcF(0))(s,z)|,|g(z)|\}}{\sqrt{\lyaV(z)}}\right]\\
 &\quad+
\sum_{\ell=0}^{n-1}\left[
\frac{2(T-t)^{\nicefrac{1}{2}}\LipConstF}{\sqrt{M^{n-\ell-1}}}
\left(
\int_{t}^{T}
\sup_{r\in[s,T]}\sup_{x\in\R^d}\biggl[(\lyaV(x))^{-1}e^{\growrate r}\E\Bigl[
\bigl|
\bigV^{0}_{\ell,M}(r,x) -\smallV(r,x)|^2\Bigr]
\biggr]ds\right)^{\!\nicefrac{1}{2}}\right].
\end{split}\end{align}
\end{lemma}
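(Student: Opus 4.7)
I would split the error via the bias--variance decomposition
\[
\bigl(\E[|\bigV_{n,M}^0(t,x) - \smallV(t,x)|^2]\bigr)^{1/2} \leq \bigl(\var[\bigV_{n,M}^0(t,x)]\bigr)^{1/2} + \bigl|\E[\bigV_{n,M}^0(t,x)] - \smallV(t,x)\bigr|
\]
and analyze the two pieces separately. For the bias, \cref{k08} expresses $\E[\bigV_{n,M}^0(t,x)]$ as $\E[\funcG(\sppr^0_{t,T}(x))] + \int_t^T \E[(\funcF(\bigV_{n-1,M}^0))(s, \sppr^0_{t,s}(x))]\,ds$, and subtracting the fixed-point identity \eqref{t27b} for $\smallV$ leaves just the time integral of $\funcF(\bigV_{n-1,M}^0) - \funcF(\smallV)$. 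Cauchy--Schwarz in time combined with the Lipschitz bound \eqref{t01} for $f$ then controls the bias by $L(T-t)^{1/2} \bigl(\int_t^T \E[|(\bigV_{n-1,M}^0 - \smallV)(s, \sppr^0_{t,s}(x))|^2]\,ds\bigr)^{1/2}$, which matches the $\ell = n-1$ summand on the right-hand side of \eqref{k01_b} (since $M^{n-\ell-1} = M^0 = 1$ there).

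For the variance, the independence assertions \cref{ppt3,ppt4,ppt5} in \cref{ppt0} imply that the Monte Carlo summands across the different levels and within each level in \eqref{t27} are mutually independent, whence
\[
\var[\bigV_{n,M}^0(t,x)] \leq \tfrac{\E[|\funcG(\sppr^0_{t,T}(x))|^2]}{M^n} + \sum_{\ell=0}^{n-1} \tfrac{(T-t)^2 \E[|(\funcF(\bigV_{\ell,M}^0) - \1_\N(\ell)\funcF(\bigV_{\ell-1,M}^0))(\uniform_t^0,\sppr^0_{t,\uniform_t^0}(x))|^2]}{M^{n-\ell}}.
\]
The boundary level ($\ell = n$) and the source level ($\ell = 0$, where $\bigV_0 = 0$ reduces the summand to $\funcF(0)$) are controlled by $|\funcG|^2 \leq c\lyaV$ and $|\funcF(0)|^2 \leq c\lyaV$ together with the Lyapunov bound $\E[\lyaV(\sppr^0_{t,s}(x))] \leq e^{\growrate(s-t)}\lyaV(x)$; using $\sqrt{a+b} \leq 2\max\{\sqrt{a},\sqrt{b}\}$ to combine them produces the prefactor $\tfrac{2 e^{\growrate T/2}}{\sqrt{M^n}} \sup_{s,z} \tfrac{\max\{|T(\funcF(0))(s,z)|,|\funcG(z)|\}}{\sqrt{\lyaV(z)}}$. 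For $\ell \in \{1, \ldots, n-1\}$, the Lipschitz property, the rewriting of the $\uniform_t$-expectation as a time integral from \cref{h08b} in \cref{h08}, and Minkowski's inequality applied to $|\bigV_\ell - \bigV_{\ell-1}| \leq |\bigV_\ell - \smallV| + |\smallV - \bigV_{\ell-1}|$ bound each level's contribution by a sum of two terms involving the level-$\ell$ and level-$(\ell-1)$ $L^2$ errors with prefactor $\tfrac{L(T-t)^{1/2}}{\sqrt{M^{n-\ell}}}$.

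Re-indexing the $\bigV_{\ell-1}-\smallV$ sum by $k = \ell-1$ and using $M^{-(n-\ell)/2} \leq M^{-(n-\ell-1)/2}$ (valid since $M \geq 1$) merges everything into a single sum over $\ell \in \{0, \ldots, n-1\}$ with prefactor $\tfrac{2L(T-t)^{1/2}}{\sqrt{M^{n-\ell-1}}}$, and the bias contribution at $\ell = n-1$ fits into the same format. Transferring to the $\lyaV$-weighted sup-norm $\sup_x[\lyaV(x)^{-1} e^{\growrate t}\,\cdot\,]^{1/2}$ uses the Lyapunov bound once more to replace the argument $\sppr^0_{t,s}(x)$ by a supremum in $x$, yielding exactly the right-hand side of \eqref{k01_b}. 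The main obstacle is the careful bookkeeping in the variance step: the decomposition into independent summands requires the detailed index-tree structure of $\Theta$ from \cref{ppt0}, and merging the $\bigV_\ell$ and $\bigV_{\ell-1}$ contributions across telescoping levels without losing constants requires attention to the re-indexing and to the correct placement of the factor $2$.
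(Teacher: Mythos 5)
Your proposal follows essentially the same approach as the paper: bias--variance decomposition via $(\E|\cdot|^2)^{1/2}\le(\var)^{1/2}+|\text{bias}|$, Bienaym\'e's identity on each within-level i.i.d.\ sum, the triangle inequality $|\bigV_\ell-\bigV_{\ell-1}|\le|\bigV_\ell-\smallV|+|\bigV_{\ell-1}-\smallV|$ combined with the identical-distribution property from \cref{ppt5}, and the re-indexing/telescoping bookkeeping that absorbs the bias term at level $\ell=n-1$ and keeps the overall prefactor at $2$. The only cosmetic difference is that you invoke full mutual independence across levels to write a single Bienaym\'e variance decomposition, whereas the paper simply applies the $L^2$-triangle inequality to $\sqrt{\var}$ across the level sum before using Bienaym\'e within each level; both routes are valid and lead to the same bound.
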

\begin{proof}[Proof of \cref{k01}]
Observe that
the triangle inequality, \eqref{t27},
Bienaym\'e's identity,
the fact that for all
$x\in\R^d$ it holds that
$\sppr^\theta_{t,T}(x)$, $\theta\in\Theta$, are i.i.d., and
\cref{k05} in \cref{k08}
  imply that for all $x\in\R^d$ it holds\footnote{Note that for every probability space $(\Omega, \cF,\P)$ and every random variable $X \colon \Omega \to \R$ with $\E[ | X | ] < \infty$ it holds that $\var( X ) = \E[ | X - \E[X] |^2 ]$.} that
\begin{align}\begin{split}
&\Bigl(\var\!\left({\bigV}_{n,M}^{{0}}(t,x)\right)\Bigr)^{\!\nicefrac{1}{2}}\leq 
\left(\var\!
\left(
\frac{1}{M^n}
 \sum_{i=1}^{M^n} 
      \funcG \!\left(\sppr^{({0},0,-i)}_{t,T}(x)\right)\right)\right)^{\!\!\nicefrac{1}{2}}\\
&+\sum_{\ell=0}^{n-1} \left(
\var \!\left(
\frac{1}{M^{n-\ell}}
    \sum_{i=1}^{M^{n-\ell}}(T-t)
      \left(\funcF\bigl({\bigV}_{\ell,M}^{({0},\ell,i)}\bigr)-\1_{\N}(\ell)\funcF\bigl( {\bigV}_{\ell-1,M}^{({0},-\ell,i)}\bigr)\right)\!\!
      \left(\uniform_t^{({0},\ell,i)},\sppr_{t,\uniform_t^{({0},\ell,i)}}^{({0},\ell,i)}(x)\right)
    \right)\right)^{\!\!\nicefrac{1}{2}}\\
&\leq \frac{1}{\sqrt{M^n}}\Bigl(\E\!
\left[ \left|\funcG\!\left(\sppr^{0}_{t,T}(x)\right)\right|^2\right]\Bigr)^{\!\nicefrac{1}{2}}
\\&+
\sum_{\ell=0}^{n-1}
\frac{1}{\sqrt{M^{n-\ell}}}\left(
\E \!\left[\left|(T-t)
      \left(\funcF\bigl({\bigV}_{\ell,M}^{({0},\ell,1)}\bigr)-\1_{\N}(\ell)\funcF\bigl( {\bigV}_{\ell-1,M}^{({0},-\ell,1)}\bigr)\right)\!\!
      \left(\uniform_t^{({0},\ell,1)},\sppr_{t,\uniform_t^{({0},\ell,1)}}^{({0},\ell,1)}(x)\right)\right|^2
    \right]\right)^{\!\nicefrac{1}{2}}.
\end{split}
\end{align}
\Cref{h08} (applied for every $\ell\in [0,n-1]\cap\N$ with $\eta\defeq (0,\ell,1)$, $\mu\defeq (0,-\ell,1)$, 
$\nu\defeq(0,\ell,1)$  in the notation of \cref{h08}) hence shows that for all $x\in\R^d$ it holds that
\begin{align}\begin{split}
\Bigl(\var\!\left({\bigV}_{n,M}^{{0}}(t,x)\right)\Bigr)^{\!\nicefrac{1}{2}}
&\leq \frac{2e^{\growrate (T-t)/2}\sqrt{\lyaV(x)}}{\sqrt{M^n}}
\sup_{s\in[0,T]}\sup_{z\in\R^d}\left[\frac{\max\{|T(\funcF(0))(s,z)|,|g(z)|\}}{\sqrt{\lyaV(z)}}\right]
\\&+
\sum_{\ell=1}^{n-1}
\frac{e^{-\growrate t/2}\sqrt{\lyaV(x)}}{\sqrt{M^{n-\ell}}}
(T-t)^{\nicefrac{1}{2}}\LipConstF\\& 
 \cdot
\left(\int_{t}^{T}\sup_{r\in[s,T]}\sup_{z\in\R^d}\biggl[\frac{e^{\growrate r}}{\lyaV(z)}\E\!\left[
\bigl|\bigV^{({0},\ell,1)}_{\ell,M}(r,z) -\bigV^{({0},-\ell,1)}_{\ell-1,M}(r,z)\bigr|^2\right]
\biggr]ds\right)^{\!\!\nicefrac{1}{2}}.
\end{split}\label{k02}
\end{align}
Next note that \cref{ppt5} in \cref{ppt0} and the triangle inequality 
demonstrate that for all $\ell\in \N$, $\eta,\nu\in\Theta$, $s\in[0,T]$, $x\in\R^d$ it holds that
\begin{align}\begin{split}
&\left(\E\!\left [\left|\bigV_{\ell,M}^\eta(s,x)-\bigV_{\ell-1,M}^\nu(s,x)\right|^2\right]\right)^{\!\nicefrac{1}{2}}\\&\leq \left(
\E\!\left [\left|\bigV_{\ell,M}^\eta(s,x)-\smallV(s,x)\right|^2\right]
\right)^{\!\nicefrac{1}{2}}
+\left(
\E\!\left [\left|\bigV_{\ell-1,M}^\nu(s,x)-\smallV(s,x)\right|^2\right]\right)^{\!\nicefrac{1}{2}}\\
&= \left(
\E\!\left [\left|\bigV_{\ell,M}^0(s,x)-\smallV(s,x)\right|^2\right]\right)^{\!\nicefrac{1}{2}}+\left(
\E\!\left [\left|\bigV_{\ell-1,M}^0(s,x)-\smallV(s,x)\right|^2\right]\right)^{\!\nicefrac{1}{2}}.
\end{split}\end{align}
This, \eqref{k02}, and the fact that for all
$a_0,a_1,\ldots,a_n\in [0,\infty]$ it holds that  $\sum_{\ell=1}^{n-1}(a_\ell+a_{\ell-1})\leq \sum_{\ell=0}^{n-1}[(2-\1_{\{n-1\}}(\ell)) a_\ell]
$
imply that for all $x\in\R^d$ it holds that 
\begin{align}\begin{split}
&\Bigl(\var\!\left({\bigV}_{n,M}^{{0}}(t,x)\right)\Bigr)^{\!\nicefrac{1}{2}}\leq  \frac{2e^{\growrate (T-t)/2}\sqrt{\lyaV(x)}}{\sqrt{M^n}}
\sup_{s\in[0,T]}\sup_{z\in\R^d}\left[\frac{\max\{|T(\funcF(0))(s,z)|,|g(z)|\}}{\sqrt{\lyaV(z)}}\right]
\\&+
\sum_{\ell=0}^{n-1}\vastl{25pt}{[}
\frac{(2-\1_{\{n-1\}}(\ell))e^{-\growrate t/2}\sqrt{\lyaV(x)}}{\sqrt{M^{n-\ell-1}}}
(T-t)^{\nicefrac{1}{2}}\LipConstF\\
&\cdot
\left(\int_{t}^{T}\sup_{r\in[s,T]}\sup_{z\in\R^d}\left[\frac{e^{\growrate r}\E\bigl[
|\bigV^{0}_{\ell,M}(r,z) -\smallV(r,z)|^2\bigr]
}{\lyaV(z)}\right]\!ds\right)^{\!\!\nicefrac{1}{2}}\vastr{25pt}{]}.
\end{split}\label{k09b}
\end{align}
Hence, we obtain that
\begin{align}\begin{split}
&\sup_{x\in\R^d}\Bigl((\lyaV(x))^{-1}e^{\growrate t}\var\!\left({\bigV}_{n,M}^{{0}}(t,x)\right)\Bigr)^{\!\nicefrac{1}{2}}
\leq \frac{2e^{\growrate T/2}}{\sqrt{M^n}}\sup_{s\in[0,T]}\sup_{z\in\R^d}\left[\frac{\max\{|T(\funcF(0))(s,z)|,|g(z)|\}}{\sqrt{\lyaV(z)}}\right]
\\&+
\sum_{\ell=0}^{n-1}\left[
\frac{(2-\1_{\{n-1\}}(\ell))(T-t)^{\nicefrac{1}{2}}\LipConstF}{\sqrt{M^{n-\ell-1}}}
\left(\int_{t}^{T}\sup_{r\in[s,T]}\sup_{x\in\R^d}\left[\frac{e^{\growrate r}\E\bigl[
|\bigV^{0}_{\ell,M}(r,x) -\smallV(r,x)|^2\bigr]
}{\lyaV(x)}\right]ds\right)^{\!\nicefrac{1}{2}}\right].\label{k09}
\end{split}\end{align}
Next observe that
\eqref{t27b} and \cref{k07} in \cref{k08} imply that for all $x\in\R^d$ it holds that
\begin{align}
\E\!\left [{\bigV}_{n,M}^{{0}}(t,x)\right]-\smallV(t,x)=\int_t^T
\E\!\left[(\funcF(\bigV^0_{n-1,M}))(s,\sppr^0_{t,s}(x))- (\funcF(\smallV))(s,\sppr^0_{t,s}(x)) \right]ds.\end{align}
Combining this, Jensen's inequality, \cref{ppt3} in \cref{ppt0},
 the fact that for all $t\in [0,T]$, $s\in [t,T]$, $x\in \R^d$
 it holds that
$\E\big[\lyaV(\fwpr _{t,s}(x))\big]\leq e^{\growrate(s-t)}\lyaV(x)$,
\eqref{t01},
and, e.g., the disintegration-type result in \cite[Lemma~2.2]{HJKNW2018}
demonstrates that for all $x\in\R^d$ it holds that
\begin{align}\begin{split}
&\left|\E \!\left[{\bigV}_{n,M}^{{0}}(t,x)\right]-\smallV(t,x)\right|\\
&\leq (T-t)^{\nicefrac{1}{2}}
\left(
\int_t^T
\E\!\left[\left|(\funcF(\bigV^0_{n-1,M}))(s,\sppr^0_{t,s}(x))- (\funcF(\smallV))(s,\sppr^0_{t,s}(x)) \right|^2\right]ds\right)^{\!\!\nicefrac{1}{2}}\\
&= (T-t)^{\nicefrac{1}{2}}\left(
\int_t^T
\E\!\left[
\E\!\left[ \left|(\funcF(\bigV_{n-1,M}^{0}))(s,z)-(\funcF(\smallV))(s,z)\right|^2\right]\Bigr|_{z=\sppr^0_{t,s}(x)}\right]ds\right)^{\!\!\nicefrac{1}{2}}\\
&\leq  (T-t)^{\nicefrac{1}{2}}\left(\int_{t}^{T}
\left[\sup_{r\in[s,T]}\sup_{z\in\R^d}
\frac{\E\!\left[ \left|(\funcF(\bigV_{n-1,M}^{0}))(r,z)-(\funcF(\smallV))(r,
z)\right|^2\right]}{\lyaV(z)}
\right]\!\E\!\left[\lyaV(\sppr_{t,s}^0(x))\right]\!ds\right)^{\!\!\nicefrac{1}{2}}\\
&\leq \LipConstF (T-t)^{\nicefrac{1}{2}}\left(\int_{t}^{T}
\left[
\sup_{r\in[s,T]}\sup_{z\in\R^d}
\frac{\E\!\left[ \left|\bigV_{n-1,M}^{0}(r,z)-\smallV(r,
z)\right|^2\right]}{\lyaV(z)}
\right]\!
\E\!\left[\lyaV(\fwpr_{t,s}(x))\right]ds\right)^{\!\!\nicefrac{1}{2}}\\
&\leq \LipConstF (T-t)^{\nicefrac{1}{2}}\left(\int_{t}^{T}
\left[\sup_{r\in[s,T]}\sup_{z\in\R^d}
\frac{\E\!\left[ \left|\bigV_{n-1,M}^{0}(r,z)-\smallV(r,
z)\right|^2\right]
}{\lyaV(z)}\right]e^{\growrate (s-t)}\lyaV(x)\,ds\right)^{\!\!\nicefrac{1}{2}}.
\end{split}\end{align}
Therefore, we obtain that
\begin{align}\begin{split}
&\sup_{x\in\R^d}\left[\Bigl.
\frac{e^{\growrate t}\left|\E \!\left[{\bigV}_{n,M}^{{0}}(t,x)\right]-\smallV(t,x)\right|}{\lyaV(x)}\right]^{\!\nicefrac{1}{2}}\\&\leq 
\LipConstF (T-t)^{\nicefrac{1}{2}}\left(\int_{t}^{T}\sup_{r\in[s,T]}\sup_{x\in\R^d}\left[
\frac{e^{\growrate r}
\E\!\left[ |\bigV_{n-1,M}^{0}(r,x)-\smallV(r,x)|^2\right]}{\lyaV(x)}\right]\! ds\right)^{\!\!\nicefrac{1}{2}}.\end{split}
\end{align}
Combining \eqref{k09} and the triangle inequality hence shows that
\begin{align}\begin{split}
&\sup_{x\in\R^d}
\Bigl[
e^{\growrate t}\lyaV(x)^{-1}\E\!\left[\left|{\bigV}_{n,M}^{{0}}(t,x)-\smallV(t,x)\right|^2\right]\Bigr]^{\!\nicefrac{1}{2}}\\
&\leq \sup_{x\in\R^d}
\Bigl[
e^{\growrate t}\lyaV(x)^{-1}\left|\E \!\left[{\bigV}_{n,M}^{{0}}(t,x)\right]-\smallV(t,x)\right|\Bigr]^{\!\nicefrac{1}{2}}+\sup_{x\in\R^d}\Bigl[e^{\growrate t}\lyaV(x)^{-1}\var\!\left({\bigV}_{n,M}^{{0}}(t,x)\right)\Bigr]^{\!\nicefrac{1}{2}}\\
&\leq 
\LipConstF (T-t)^{\nicefrac{1}{2}}\left(\int_{t}^{T}\sup_{r\in[s,T]}\sup_{x\in\R^d}\left[(\lyaV(x))^{-1}
e^{\growrate r}
\E\!\left[ \left|\bigV_{n-1,M}^{0}(r,x)-\smallV(r,x)\right|^2\right]\right]ds \right)^{\!\nicefrac{1}{2}}\\&\quad +\frac{2e^{\growrate T/2}}{\sqrt{M^n}}
\sup_{s\in[0,T]}\sup_{z\in\R^d}\left[\frac{\max\{|T(\funcF(0))(s,z)|,|g(z)|\}}{\sqrt{\lyaV(z)}}\right]
\\&\quad +
\left[
\sum_{\ell=0}^{n-1}
\frac{(2-\1_{\{n-1\}}(\ell))(T-t)^{\nicefrac{1}{2}}\LipConstF}{\sqrt{M^{n-\ell-1}}}
\left(
\int_{t}^{T}
\sup_{r\in[s,T]}\sup_{x\in\R^d}\biggl[
\frac{e^{\growrate r}\E\!\left[|
\bigV^{0}_{\ell,M}(r,x) -\smallV(r,x)|^2\right]}{\lyaV(x)}
\biggr]ds\right)^{\!\nicefrac{1}{2}}\right]\\
&= 
\frac{2e^{\growrate T/2}}{\sqrt{M^n}}\sup_{s\in[0,T]}\sup_{z\in\R^d}\left[\frac{\max\{|T(\funcF(0))(s,z)|,|g(z)|\}}{\sqrt{\lyaV(z)}}\right]\\
&\quad
 +
\sum_{\ell=0}^{n-1}
\frac{2(T-t)^{\nicefrac{1}{2}}\LipConstF}{\sqrt{M^{n-\ell-1}}}
\left(
\int_{t}^{T}
\sup_{r\in[s,T]}\sup_{x\in\R^d}\left[\Bigl.
\frac{e^{\growrate r}\E\!\left[
|
\bigV^{0}_{\ell,M}(r,x) -\smallV(r,x)|^2\right]}{\lyaV(x)}
\right]\!ds\right)^{\!\nicefrac{1}{2}}.
\end{split}\end{align}
This establishes \eqref{k01_b}. The proof of \cref{k01} is thus complete.
\end{proof}

\subsection{Function space-valued Gronwall-type inequalities}
\label{subsec:gronwall}

\begin{lemma}\label{lem:gronwall1}
Let $K\in \N$, $\alpha,\beta\in [0,\infty)$, $\varepsilon_0, \varepsilon_1,\ldots \varepsilon_K\in [0,\infty]$ satisfy $\max\{\varepsilon_0, \varepsilon_1,\ldots \varepsilon_{K-1}\}<\infty$ and
$
\varepsilon_K\le \alpha +\beta\big[\sum_{k=0}^{K-1}\varepsilon_k\big]
$.
Then $\varepsilon_K<\infty$.
\end{lemma}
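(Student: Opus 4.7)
The statement is essentially a bookkeeping observation about finiteness in $[0,\infty]$, so the plan is short. First I would observe that by hypothesis $\max\{\varepsilon_0,\varepsilon_1,\ldots,\varepsilon_{K-1}\}<\infty$, which means each of the finitely many summands $\varepsilon_0,\ldots,\varepsilon_{K-1}$ lies in $[0,\infty)$. Since a finite sum of nonnegative real numbers is a nonnegative real number, this yields
\begin{equation*}
\sum_{k=0}^{K-1}\varepsilon_k \;\le\; K\cdot\max\{\varepsilon_0,\varepsilon_1,\ldots,\varepsilon_{K-1}\} \;<\; \infty.
\end{equation*}

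Next I would combine this with $\alpha,\beta\in[0,\infty)$: multiplying a nonnegative real by the finite nonnegative real $\beta$ and then adding the finite nonnegative real $\alpha$ keeps us in $[0,\infty)$. Hence
\begin{equation*}
\alpha + \beta\Bigl[\sum_{k=0}^{K-1}\varepsilon_k\Bigr] \;<\; \infty.
\end{equation*}
The assumed bound $\varepsilon_K\le \alpha+\beta\bigl[\sum_{k=0}^{K-1}\varepsilon_k\bigr]$ then immediately forces $\varepsilon_K<\infty$, completing the proof.

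There is no real obstacle here; the lemma is a trivial finiteness statement whose only subtle point is that one must not confuse the element $\varepsilon_k\in[0,\infty]$ with a real number before invoking finiteness. The role of the lemma must be to serve as a clean induction base (or inductive step) in the forthcoming function space-valued Gronwall arguments (\cref{g09-1}, \cref{g090a}, \cref{g09}), where one wants to propagate finiteness of error quantities $\varepsilon_k$ along a recursive estimate before passing to the quantitative bound.
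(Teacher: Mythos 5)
Your proof is correct and takes the same approach as the paper: deduce from $\max\{\varepsilon_0,\ldots,\varepsilon_{K-1}\}<\infty$ that $\alpha+\beta\sum_{k=0}^{K-1}\varepsilon_k<\infty$, then conclude $\varepsilon_K<\infty$ from the assumed bound. The extra intermediate estimate $\sum_{k=0}^{K-1}\varepsilon_k\le K\cdot\max\{\ldots\}$ is a harmless elaboration of what the paper treats as immediate.
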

\begin{proof}[Proof of \cref{lem:gronwall1}]
Note that the hypothesis that $\max\{\varepsilon_0, \varepsilon_1,\ldots \varepsilon_{K-1}\}<\infty$ implies that
$
\alpha +\beta\big[\sum_{k=0}^{K-1}\varepsilon_k\big]<\infty
$.
This and the hypothesis that $\varepsilon_K\le \alpha +\beta\big[\sum_{k=0}^{K-1}\varepsilon_k\big]$ establish that $\varepsilon_K<\infty$. The proof of \cref{lem:gronwall1} is thus complete.
\end{proof}

\begin{lemma}\label{lem:gronwall2}
Let $N\in \N$, $\beta, \alpha_0, \alpha_1,\ldots, \alpha_N\in [0,\infty)$, $\varepsilon_0, \varepsilon_1,\ldots \varepsilon_N\in [0,\infty]$ satisfy for all $n\in \{0,1,\ldots,N\}$ that
$\varepsilon_n\le \alpha_n +\beta\big[\sum_{k=0}^{n-1}\varepsilon_k\big]$
(cf.\ \cref{lem:gronwall1}).
Then it holds for all $n\in \{0,1,\ldots,N\}$ that
\begin{equation}
\varepsilon_n\le \alpha_n+\beta \left[\sum_{k=0}^{n-1}(1+\beta)^{n-k-1}\alpha_k\right]<\infty.
\end{equation}
\end{lemma}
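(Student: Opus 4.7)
The plan is to prove the claim by strong induction on $n\in\{0,1,\ldots,N\}$, establishing simultaneously that $\varepsilon_n<\infty$ and that the explicit bound
\begin{equation*}
\varepsilon_n\le \alpha_n+\beta\left[\sum_{k=0}^{n-1}(1+\beta)^{n-k-1}\alpha_k\right]
\end{equation*}
holds. For the base case $n=0$ the hypothesis reduces to $\varepsilon_0\le\alpha_0<\infty$, which coincides with the asserted bound since the sum is empty. For the inductive step, assume the bound (and finiteness) for every $m\in\{0,1,\ldots,n-1\}$. In particular $\max\{\varepsilon_0,\ldots,\varepsilon_{n-1}\}<\infty$, so \cref{lem:gronwall1} (applied with $K=n$, $\alpha=\alpha_n$ and the same $\beta$) immediately yields $\varepsilon_n<\infty$, which handles the finiteness part.

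For the quantitative bound I would insert the inductive hypothesis into the recursion assumption, obtaining
\begin{equation*}
\varepsilon_n\le \alpha_n+\beta\sum_{k=0}^{n-1}\varepsilon_k
\le \alpha_n+\beta\sum_{k=0}^{n-1}\alpha_k+\beta^2\sum_{k=0}^{n-1}\sum_{j=0}^{k-1}(1+\beta)^{k-j-1}\alpha_j,
\end{equation*}
and then reduce the conclusion to the combinatorial identity
\begin{equation*}
\sum_{k=0}^{n-1}\alpha_k+\beta\sum_{k=0}^{n-1}\sum_{j=0}^{k-1}(1+\beta)^{k-j-1}\alpha_j=\sum_{k=0}^{n-1}(1+\beta)^{n-k-1}\alpha_k.
\end{equation*}
This identity is verified by swapping the order of summation in the double sum and applying the finite geometric sum formula $\beta\sum_{i=0}^{m}(1+\beta)^i=(1+\beta)^{m+1}-1$; the resulting expression $\alpha_{n-1}+\sum_{j=0}^{n-2}\alpha_j(1+\beta)^{n-1-j}$ telescopes neatly into the right-hand side after reindexing.

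The main work is the algebraic bookkeeping in that identity; there is no analytic subtlety, since all quantities are nonnegative and finite by the inductive hypothesis together with \cref{lem:gronwall1}. Consequently the only step requiring care is making sure that the Fubini-style swap of the double sum and the geometric summation are executed cleanly, so that the telescoping cancellation produces exactly $\sum_{k=0}^{n-1}(1+\beta)^{n-k-1}\alpha_k$ as required. Once this identity is established, substituting it back into the inequality gives the claimed bound on $\varepsilon_n$ and closes the induction.
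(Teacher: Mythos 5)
Your proof is correct, but it takes a different route from the paper's. The paper introduces an auxiliary sequence $(\gamma_n)_{n\in\{0,1,\ldots,N\}}$ defined by the \emph{equality} $\gamma_n=\alpha_n+\beta\sum_{k=0}^{n-1}\gamma_k$, derives a first-order recursion $\gamma_n=\alpha_n-\alpha_{n-1}+(1+\beta)\gamma_{n-1}$ by subtracting consecutive relations, and uses that telescoping recursion to prove the closed-form formula $\gamma_n=\alpha_n+\beta\sum_{k=0}^{n-1}(1+\beta)^{n-k-1}\alpha_k$ by a short induction; the bound $\varepsilon_n\le\gamma_n$ is then a separate, trivial induction. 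You instead work directly with the inequality, substitute the inductive hypothesis for $\varepsilon_0,\ldots,\varepsilon_{n-1}$ into the recursion, and collapse the resulting double sum via an interchange of summation order and the geometric-series identity $\beta\sum_{i=0}^{m}(1+\beta)^i=(1+\beta)^{m+1}-1$. Both arguments are sound and elementary; the paper's auxiliary-sequence device buys a cleaner single-step recursion in $\gamma_{n-1}$ and avoids the double sum altogether, whereas your direct substitution is conceptually more immediate but requires a bit more bookkeeping in the Fubini-style rearrangement. One small remark: the cancellation you describe is really the geometric-series telescope inside the identity verification, not a telescoping of the main inequality; and the appeal to \cref{lem:gronwall1} for finiteness in the inductive step is in fact not needed, since the explicit bound you establish already implies finiteness because all $\alpha_k$ are finite.
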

\begin{proof}[Proof of \cref{lem:gronwall2}]
Throughout this proof let 
$\gamma_0, \gamma_1, \ldots, \gamma_N
\in \R$ satisfy for all $n\in \{0,1,\ldots,N\}$ that
\begin{equation}\label{eq:gronwall3}
\gamma_n=\alpha_n +\beta\left[\sum_{k=0}^{n-1}\gamma_k\right].
\end{equation}
We claim that for all $n\in \{0,1,\ldots,N\}$ it holds that
\begin{equation}\label{eq:gronwall2}
\gamma_n=\alpha_n+\beta \left[\sum_{k=0}^{n-1}(1+\beta)^{n-k-1}\alpha_k\right].
\end{equation}
We prove \cref{eq:gronwall2} by induction on $n\in \{0,1,\ldots,N\}$. For the base case $n=0$ observe that \cref{eq:gronwall3} ensures that $\gamma_0=\alpha_0$. This proves \cref{eq:gronwall2} in the base case $n=0$. For the induction step $\{0,1,\ldots,N-1\}\ni n-1 \induct n \in \{1,2,\ldots,N\}$ observe that \cref{eq:gronwall3} implies that for all  $n \in \{1,2,\ldots,N\}$ with $\gamma_{n-1}= \alpha_{n-1}+\beta \sum_{k=0}^{n-2}(1+\beta)^{n-k-2}\alpha_k$ it holds that
\begin{equation}
\begin{split}
\gamma_n&=\alpha_n +\beta\left[\sum_{k=0}^{n-1}\gamma_k\right]
=\alpha_n-\alpha_{n-1}+\beta \gamma_{n-1}+\alpha_{n-1}+\beta\left[\sum_{k=0}^{n-2}\gamma_k\right]\\
&=\alpha_n-\alpha_{n-1}+\beta \gamma_{n-1}+\gamma_{n-1}=
\alpha_n-\alpha_{n-1}+(1+\beta)\gamma_{n-1}\\
&=\alpha_n-\alpha_{n-1}+(1+\beta)\left(
 \alpha_{n-1}+\beta \left[\sum_{k=0}^{n-2}(1+\beta)^{n-k-2}\alpha_k\right]\right)\\
 &=\alpha_n+\beta
 \alpha_{n-1}+\beta \left[\sum_{k=0}^{n-2}(1+\beta)^{n-k-1}\alpha_k\right]
 =\alpha_n+\beta \left[\sum_{k=0}^{n-1}(1+\beta)^{n-k-1}\alpha_k\right].
\end{split}
\end{equation}
Induction hence establishes \cref{eq:gronwall2}. Moreover, note that \cref{eq:gronwall3}, induction, and the assumption that for all $n\in \{0,1,\ldots,N\}$ it holds that
$\varepsilon_n\le \alpha_n +\beta\big[\sum_{k=0}^{n-1}\varepsilon_k\big]$ prove that for all $n\in \{0,1,\ldots,N\}$ it holds that $\varepsilon_n\le \gamma_n$. This and \cref{eq:gronwall2} establish that for all $n\in \{0,1,\ldots,N\}$ it holds that
\begin{equation}
\varepsilon_n\le \alpha_n+\beta \left[\sum_{k=0}^{n-1}(1+\beta)^{n-k-1}\alpha_k\right]<\infty.
\end{equation}
The proof of \cref{lem:gronwall2} is thus complete.
\end{proof}

\begin{lemma}\label{g09-2} Let $K\in\N$, $a,b,\lemdreisechsc \in [0,\infty)$, ${\lemdreisechst}\in \R$, $\lemdreisechsT\in [\lemdreisechst,\infty)$, $p\in (0,\infty)$,
let $f_n\colon [{\lemdreisechst},\lemdreisechsT]\to[0,\infty]$, $n\in \N_0$, be 
measurable, assume $\sup_{s\in [\lemdreisechst,\lemdreisechsT]}\max\{|f_0(s)|,|f_1(s)|,\ldots,|f_{K-1}(s)|\}<\infty$, and assume for all $t\in[\lemdreisechst,\lemdreisechsT]$ that
\begin{align}
|f_K(t)|\leq a\lemdreisechsc^K + \sum_{\ell=0}^{K-1}\left[
b\lemdreisechsc^{K-\ell-1}
\left[
\int_\lemdreisechst^t|f_\ell(s)|^p\,ds\right]^{\!\nicefrac{1}{p}}\right].
\label{g09-2a}
\end{align}
Then $\sup_{s\in [\lemdreisechst,\lemdreisechsT]}|f_K(s)|<\infty$.
\end{lemma}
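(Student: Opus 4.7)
The plan is to observe that the right-hand side of the assumed inequality \eqref{g09-2a}, while it depends on $t$ through the integrals, is dominated uniformly in $t \in [\lemdreisechst,\lemdreisechsT]$ by a finite constant built entirely out of the quantities that the hypothesis of the lemma assumes are finite. No fixed-point or Gronwall-type iteration is needed here; this is purely a boundedness statement for the single function $f_K$, given finiteness of the preceding ones.

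First I would set $M \defeq \sup_{s\in [\lemdreisechst,\lemdreisechsT]}\max\{|f_0(s)|,|f_1(s)|,\ldots,|f_{K-1}(s)|\}$, which is finite by hypothesis. Then, for every $\ell\in\{0,1,\ldots,K-1\}$ and every $t\in[\lemdreisechst,\lemdreisechsT]$, I would estimate
\begin{equation*}
\left[\int_\lemdreisechst^t |f_\ell(s)|^p\,ds\right]^{\!\nicefrac{1}{p}}
\leq \bigl[(\lemdreisechsT-\lemdreisechst)\, M^p\bigr]^{\nicefrac{1}{p}}
= (\lemdreisechsT-\lemdreisechst)^{\nicefrac{1}{p}} M < \infty.
\end{equation*}
Substituting this into \eqref{g09-2a} yields, for all $t\in[\lemdreisechst,\lemdreisechsT]$, the uniform bound
\begin{equation*}
|f_K(t)| \leq a\lemdreisechsc^K + (\lemdreisechsT-\lemdreisechst)^{\nicefrac{1}{p}} M \sum_{\ell=0}^{K-1} b\lemdreisechsc^{K-\ell-1},
\end{equation*}
whose right-hand side is independent of $t$ and finite.

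Taking the supremum over $t\in[\lemdreisechst,\lemdreisechsT]$ then gives $\sup_{s\in [\lemdreisechst,\lemdreisechsT]}|f_K(s)|<\infty$, which is the desired conclusion. There is no genuine obstacle in this proof; the only mild point to mention is that the integrand $|f_\ell|^p$ is measurable (being the composition of a measurable function with the continuous map $x\mapsto x^p$ on $[0,\infty]$), so the integral is well-defined in $[0,\infty]$, and the monotonicity of the integral over the bounded interval $[\lemdreisechst,t]\subseteq[\lemdreisechst,\lemdreisechsT]$ delivers the estimate by the constant $M$.
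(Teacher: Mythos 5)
Your proof is correct and follows essentially the same route as the paper's: bound each integral $\int_\lemdreisechst^t |f_\ell(s)|^p\,ds$ by $(\lemdreisechsT-\lemdreisechst)\bigl[\sup_{s}|f_\ell(s)|\bigr]^p$, which makes the right-hand side of \eqref{g09-2a} uniformly finite in $t$, and then take the supremum. The only cosmetic difference is that you collapse all the individual suprema into a single constant $M$, whereas the paper keeps them separate inside the sum; the substance of the estimate is identical.
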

\begin{proof}[Proof of \cref{g09-2}]
Note that the hypothesis that $\sup_{s\in [\lemdreisechst,\lemdreisechsT]}\max\{|f_0(s)|,|f_1(s)|,\ldots,|f_{K-1}(s)|\}<\infty$ implies that
\begin{equation}
\begin{split}
&\sup_{t \in [\lemdreisechst,\lemdreisechsT]}\left(\sum_{\ell=0}^{K-1}\left[
b\lemdreisechsc^{K-\ell-1}
\left[
\int_\lemdreisechst^t|f_\ell(s)|^p\,ds\right]^{\!\nicefrac{1}{p}}\right]\right)
\le 
\sum_{\ell=0}^{K-1}\left[
b\lemdreisechsc^{K-\ell-1}
\left[
\int_\lemdreisechst^\lemdreisechsT |f_\ell(s)|^p\,ds\right]^{\!\nicefrac{1}{p}}\right]\\
&\le 
\sum_{\ell=0}^{K-1}\left[
b\lemdreisechsc^{K-\ell-1}
\left[ \sup_{s \in [\lemdreisechst,\lemdreisechsT]} |f_\ell(s)|\right]
\left[ \lemdreisechsT -\lemdreisechst\right]^{\!\nicefrac{1}{p}}\right]<\infty.
\end{split}
\end{equation}
Combining this with \cref{g09-2a} establishes that $\sup_{s\in [\lemdreisechst,\lemdreisechsT]}|f_K(s)|<\infty$. The proof of \cref{g09-2} is thus complete.
\end{proof}

\begin{lemma}\label{g09-1} Let $N\in\N$, $a,b,\lemdreisechsc \in [0,\infty)$, ${\lemdreisechst}\in \R$, $\lemdreisechsT\in [\lemdreisechst,\infty)$, $p\in [1,\infty)$,
let $f_n\colon [{\lemdreisechst},\lemdreisechsT]\to[0,\infty]$, $n\in \N_0$, be 
measurable, assume $\sup_{s\in [\lemdreisechst,\lemdreisechsT]}|f_0(s)|<\infty$, and assume for all $n\in\{1,2,\ldots,N\}$, $t\in[\lemdreisechst,\lemdreisechsT]$ that
\begin{align}
|f_n(t)|\leq a\lemdreisechsc^n + \sum_{\ell=0}^{n-1}\left[
b\lemdreisechsc^{n-\ell-1}
\left[
\int_\lemdreisechst^t|f_\ell(s)|^p\,ds\right]^{\!\nicefrac{1}{p}}\right]
\label{g09-1a}
\end{align}
(cf.\ \cref{g09-2}).
Then
\begin{equation}\label{g09-2x}
\begin{split}
f_N ({\lemdreisechsT})&\leq
ac^N+b(\beta-\alpha)^{\nicefrac{1}{p}}[1+b(\beta-\alpha)^{\nicefrac{1}{p}}]^{N-1}\left[ \max_{ k \in \{0,1,\ldots,N\} } \tfrac{ \lemdreisechsc^{N-k}}{ (k!)^{1/p} } \right]\left[\sup_{s\in [\lemdreisechst,\lemdreisechsT]}|f_0(s)|\right]\\
&\quad +ab(\beta-\alpha)^{\nicefrac{1}{p}}\sum_{n=1}^{N-1}[1+b(\beta-\alpha)^{\nicefrac{1}{p}}]^{N-n-1}\left[ \max_{ k \in \{0,1,\ldots,N-n\}} \tfrac{ \lemdreisechsc^{N-k}}{ (k!)^{1/p} } \right]\\
&\leq 
\left[ a+b(\lemdreisechsT-\lemdreisechst)^{\nicefrac{1}{p}}\left[\sup_{s\in [\lemdreisechst,\lemdreisechsT]}|f_0(s)|\right]\right]
\left[\max_{ k \in \{0,1,\ldots,N\}} \tfrac{ \lemdreisechsc^{N-k}}{ (k!)^{1/p} }\right]
\left[1+b(\lemdreisechsT-\lemdreisechst)^{\nicefrac{1}{p}}\right]^{N-1}.
\end{split}
\end{equation}
\end{lemma}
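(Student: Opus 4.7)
Proof proposal. The plan is to proceed in two stages. First, I would establish that $M_n := \sup_{s \in [\alpha,\beta]} |f_n(s)| < \infty$ for every $n \in \{0, 1, \ldots, N\}$ by induction on $n$: the base case $n=0$ is the standing hypothesis, and for the inductive step I would invoke \cref{g09-2} with $K = n$, whose hypotheses (finiteness of $\sup_s|f_\ell|$ for $\ell < n$ together with the recursive bound \eqref{g09-1a}) are exactly available.

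Next, I would exploit linearity: since \eqref{g09-1a} is linear in $|f_\ell|$, the bound $|f_n(t)| \leq \alpha_n(t) + \gamma_n(t)\,M_0$ holds, where $(\alpha_n,\gamma_n)_{n \in \N_0}$ are determined by the recursions
\begin{equation*}
\alpha_n(t) = ac^n + \sum_{\ell=0}^{n-1} bc^{n-\ell-1}\left[\int_\alpha^t |\alpha_\ell(s)|^p\,ds\right]^{\!1/p}\!, \quad \gamma_n(t) = \sum_{\ell=0}^{n-1} bc^{n-\ell-1}\left[\int_\alpha^t |\gamma_\ell(s)|^p\,ds\right]^{\!1/p}\!,
\end{equation*}
with initial data $\alpha_0 \equiv 0$ and $\gamma_0 \equiv 1$. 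The split is obtained using Minkowski's inequality (valid since $p \geq 1$) applied to the $L^p$-norm of $\alpha_\ell + \gamma_\ell M_0$ over $[\alpha,t]$.

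By induction on $n$, together with the hockey-stick identity $\sum_{m=r}^{M}\binom{m}{r} = \binom{M+1}{r+1}$, I would derive the explicit closed forms
\begin{equation*}
\alpha_n(t) \leq a\sum_{k=0}^{n-1}\binom{n-1}{k}\tfrac{b^k c^{n-k}(t-\alpha)^{k/p}}{(k!)^{1/p}}, \qquad \gamma_n(t) \leq \sum_{k=1}^{n}\binom{n-1}{k-1}\tfrac{b^k c^{n-k}(t-\alpha)^{k/p}}{(k!)^{1/p}}.
\end{equation*}
Setting $t = \beta$ and $B := b(\beta-\alpha)^{1/p}$, the bound on $\gamma_N(\beta)$ factors as $B(1+B)^{N-1} \cdot \max_{k \in \{0,\ldots,N\}} c^{N-k}/(k!)^{1/p}$ by pulling out the max and invoking the binomial theorem, which already matches the $M_0$-coefficient in the target. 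For $\alpha_N(\beta) - ac^N$, I would bound each $c^{N-k}/(k!)^{1/p} \leq D_k := \max_{j \in \{0,\ldots,k\}} c^{N-j}/(j!)^{1/p}$, expand $B(1+B)^{j-1} = B\sum_{l=0}^{j-1}\binom{j-1}{l}B^l$, swap the order of summation, and apply the hockey-stick identity a second time, using the monotonicity of $D_k$ in $k$, to convert the binomial sum into the stated form $aB\sum_{m=1}^{N-1}(1+B)^{N-m-1} D_{N-m}$.

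The second, coarser bound then follows by $D_{N-m} \leq D_N$ and the geometric identity $B\sum_{m=1}^{N-1}(1+B)^{N-m-1} = (1+B)^{N-1} - 1$, which converts the $a$-contribution into at most $aD_N[(1+B)^{N-1} - 1]$; combining with $ac^N \leq aD_N$ (since $c^N = c^{N}/(0!)^{1/p}$ is one of the terms in the max) and the $M_0$-term $BD_N M_0(1+B)^{N-1}$ yields $(a + BM_0)D_N(1+B)^{N-1}$, as claimed. The main obstacle is carefully executing the two hockey-stick combinatorial identities—first to derive the closed forms for $\alpha_n,\gamma_n$, and second to pass from the explicit binomial representation to the max-based expression in the claim—but each reduces to a standard calculation once the indexing is correctly set up.
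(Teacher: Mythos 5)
Your proof is correct, but it follows a genuinely different route from the paper's. The paper never computes explicit closed forms: it introduces the weighted measures $\gamma_k(dt)$ with densities proportional to $(\lemdreisechsT-t)^{k-1}/(k-1)!$, sets $\varepsilon_n=\sup\{\lemdreisechsc^j[\int|f_n|^p\,d\gamma_k]^{1/p}\colon j+k+n=N\}$, shows by a Fubini computation that integrating the hypothesis against $\gamma_k$ produces $\gamma_{k+1}$ and an extra factor $(\lemdreisechsT-\lemdreisechst)$, obtains the scalar recursion $\varepsilon_n\le a\max_{k\le N-n}\lemdreisechsc^{N-k}(k!)^{-1/p}+b(\lemdreisechsT-\lemdreisechst)^{1/p}\sum_{\ell<n}\varepsilon_\ell$, and concludes with the discrete Gronwall inequality of \cref{lem:gronwall2}; this yields the first displayed bound directly, with the factorials entering through $\gamma_k([\lemdreisechst,\lemdreisechsT])=1/k!$. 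You instead split off the homogeneous part (coefficient of $\sup_s|f_0(s)|$) from the inhomogeneous $a$-part, derive explicit binomial-coefficient bounds for both by induction (Minkowski for $p\ge 1$, $\int_\lemdreisechst^t(s-\lemdreisechst)^k\,ds=(t-\lemdreisechst)^{k+1}/(k+1)$, and the hockey-stick identity), and then compare these to the claimed expressions; I checked that the extra comparison step you need for the $a$-part does go through, since $\lemdreisechsc^{N-j}(j!)^{-1/p}\le \max_{i\le j}\lemdreisechsc^{N-i}(i!)^{-1/p}$, this max is nondecreasing in $j$, and a second hockey-stick application converts $\binom{N-1}{j}$ into $\sum_{m\ge j}\binom{m-1}{j-1}$, so your binomial bound is indeed dominated by the first line of \eqref{g09-2x}; your passage to the second, coarser line coincides with the paper's. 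What each approach buys: the paper's measure/Gronwall argument avoids all combinatorics, produces the $\max_k \lemdreisechsc^{N-k}(k!)^{-1/p}$ structure automatically, and reuses \cref{lem:gronwall1,lem:gronwall2}; your argument is more elementary and in fact yields a sharper intermediate bound (the explicit binomial sum), at the cost of heavier index bookkeeping and the extra comparison needed to match the exact stated form. Note also that your first stage (invoking \cref{g09-2} to get finiteness of all $\sup_s|f_n(s)|$) is not actually needed for your argument, since only $\sup_s|f_0(s)|<\infty$ enters the induction; it is harmless, but you could drop it.
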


\begin{proof}[Proof of \cref{g09-1}]
Throughout this proof assume w.l.o.g.\ that $\alpha < \beta$,
let $\gamma_k\colon \mathcal{B}([{\lemdreisechst},\lemdreisechsT])\to [0,\infty)$, $k\in\N_0$, 
satisfy
for all $k\in \N$, $A\in \mathcal{B}([{\lemdreisechst},\lemdreisechsT])$ that 
\begin{align}
\gamma_{0}(A)=\1_A({\lemdreisechsT})\qquad\text{and}\qquad\gamma_k(A)= \frac{1}{(\lemdreisechsT-{\lemdreisechst})^k}\int_A\frac{{(\lemdreisechsT-t)^{k-1}}}{(k-1)!}\,dt,\label{g10}
\end{align} 
and let $\varepsilon_n\in[0,\infty]$, $n\in \N_0$, satisfy for all $n\in \N_0$ that
\begin{align}\label{g10b}
\varepsilon_n=\sup\left\{\lemdreisechsc^j{\left[\int_{{\lemdreisechst}}^{\lemdreisechsT}|f_n(t)|^p\gamma_k(dt)\right]^{\!\nicefrac{1}{p}}}\colon j,k\in\N_0,n+j+k=N\right\}.
\end{align}
Observe that \cref{g10} ensures that for all
$k\in \N_0$ 
%
it holds that
\begin{align}\label{g11b}
\int_{{\lemdreisechst}}^{\lemdreisechsT}\gamma_k(dt)=\frac{1}{k!}
\end{align}
This
and \eqref{g10b} show that
\begin{align}\begin{split}
\varepsilon_0&= 
\sup\left\{{\lemdreisechsc^j\left[\int_{{\lemdreisechst}}^{\lemdreisechsT}|f_0(t)|^p\gamma_k(dt)\right]^{\!\nicefrac{1}{p}}}\colon j,k\in\N_0,j+k=N\right\}\\
&\leq 
\sup\left\{
{\lemdreisechsc^j|f_0(s)|\left[\int_{{\lemdreisechst}}^{\lemdreisechsT}\gamma_k(dt)\right]^{\!\nicefrac{1}{p}}}\colon j,k\in\N_0,j+k=N,{s\in[\lemdreisechst,\lemdreisechsT]}\right\}\\
&= 
\sup\left\{
{\frac{\lemdreisechsc^j|f_0(s)|}{(k!)^{1/p}}}\colon j,k\in\N_0,j+k=N,{s\in[\lemdreisechst,\lemdreisechsT]}\right\}\leq 
\left[ \max_{ k \in \{0,1,\ldots,N\} } \tfrac{ \lemdreisechsc^{N-k}}{ (k!)^{1/p} } \right]\left[\sup_{s\in [\lemdreisechst,\lemdreisechsT]}|f_0(s)|\right].\label{g11}\end{split}
\end{align}
Next note that Fubini's theorem and \eqref{g10} imply that for all $\ell\in[0,N]\cap\N$, $k\in\N_0$ it holds that
\begin{align}\begin{split}
&\int_{{\lemdreisechst}}^{\lemdreisechsT} \int_\lemdreisechst^t |{f_\ell}(s)|^p\,ds\,\gamma_k(dt)\\
&=
\1_{\{0\}}(k)\left[\int_{{\lemdreisechst}}^{\lemdreisechsT}|f_\ell(s)|^p\,ds\right]+
\1_{\N}(k)\left[
\frac{1}{(\lemdreisechsT-{\lemdreisechst})^k}
\int_{{\lemdreisechst}}^{\lemdreisechsT} \frac{{(\lemdreisechsT-t)}^{k-1}}{(k-1)!}\int_\lemdreisechst^t |f_\ell(s)|^p\,ds\,dt\right]\\
&=
\1_{\{0\}}(k)\left[\int_{{\lemdreisechst}}^{\lemdreisechsT}|f_\ell(s)|^p\,ds\right]+
\1_{\N}(k)
\left[
\frac{1}{(\lemdreisechsT-{\lemdreisechst})^k}
\int_{{\lemdreisechst}}^{\lemdreisechsT}\int_{s}^{\lemdreisechsT} \frac{{(\lemdreisechsT-t)}^{k-1}}{(k-1)!} |f_\ell(s)|^p\,dt\,ds\right]
\\
&= 
\1_{\{0\}}(k)\left[\int_{{\lemdreisechst}}^{\lemdreisechsT}|f_\ell(s)|^p\,ds\right]+\1_{\N}(k)
\left[\frac{1}{(\lemdreisechsT-{\lemdreisechst})^{k}}
\int_{{\lemdreisechst}}^{\lemdreisechsT} \frac{{(\lemdreisechsT-s)}^{k}}{k!} |f_\ell(s)|^p\,ds\right]\\
&=
\1_{\{0\}}(k)\left[(\lemdreisechsT-\lemdreisechst)\int_{{\lemdreisechst}}^{\lemdreisechsT}|f_\ell(s)|^p\,\gamma_1(ds)\right]+\1_{\N}(k)
\left[(\lemdreisechsT-{\lemdreisechst})
\int_{{\lemdreisechst}}^{\lemdreisechsT} |f_\ell(s)|^p\,\gamma_{k+1}(ds)\right]\\
&
=(\lemdreisechsT-\lemdreisechst) \int_{{\lemdreisechst}}^{\lemdreisechsT}|f_\ell(s)|^p\,\gamma_{k+1}(ds).
\end{split}\end{align}
Combining \eqref{g09-1a}
 and \eqref{g11b} hence assures that
 for all $n\in \{1,2,\ldots,N\}$, $k,j\in\N_0$ with $n+k+j=N$ it holds that
\begin{align}\small\begin{split}
&\lemdreisechsc^j\left[\int_{{\lemdreisechst}}^{\lemdreisechsT}|f_n(t)|^p\gamma_k(dt)\right]^{\!\nicefrac{1}{p}}\\
&
\leq a \lemdreisechsc^{n+j}\left[\int_{\lemdreisechst}^{\lemdreisechsT}\gamma_k(dt)\right]^{\nicefrac{1}{p}}+ \sum_{\ell=0}^{n-1}\left[b\lemdreisechsc^{n+j-\ell-1}\left[\int_{\lemdreisechst}^\lemdreisechsT
\int_\lemdreisechst^t|f_\ell(s)|^p\,ds\,\gamma_k(dt)\right]^{\!\nicefrac{1}{p}}\right]\\
&
\leq \frac{a \lemdreisechsc^{N-k}}{(k!)^{1/p}}+ \sum_{\ell=0}^{n-1}\left[
b(\lemdreisechsT-\lemdreisechst)^{\nicefrac{1}{p}}\lemdreisechsc^{n+j-\ell-1}\left[
\int_{\lemdreisechst}^\lemdreisechsT |f_\ell(s)|^p\gamma_{k+1}(ds)\right]^{\!\nicefrac{1}{p}}\right].
\end{split}\end{align}
This, \eqref{g10b}, and the fact that for all
$
n,k,j\in\N_0 $ with $n+k+j=N$ it holds that $\ell+(n+j-\ell-1)+(k+1)=N
$ imply that for all $n\in \{1,2,\ldots,N\}$ it holds that
\begin{align}
\varepsilon_n\leq 
a\left[ \max_{ k \in \{0,1,\ldots,N-n\}} \frac{ \lemdreisechsc^{N-k}}{ (k!)^{1/p} } \right]
+b(\lemdreisechsT-\lemdreisechst)^{\nicefrac{1}{p}}\left[\sum_{\ell=0}^{n-1} \varepsilon_\ell \right].
\end{align}
Combining \eqref{g10}, \eqref{g10b}, and \eqref{g11} with \cref{lem:gronwall2} (applied with $\beta\defeq b(\beta-\alpha)^{\nicefrac{1}{p}}$,
$\alpha_0\defeq [ \max_{ k \in \{0,1,\ldots,N\} } \tfrac{ \lemdreisechsc^{N-k}}{ (k!)^{1/p} } ][\sup_{s\in [\lemdreisechst,\lemdreisechsT]}|f_0(s)|]$, $(\alpha_n)_{n\in \{1,2,\ldots,N\}}\defeq (a[ \max_{ k \in \{0,1,\ldots,N-n\}} \tfrac{ \lemdreisechsc^{N-k}}{ (k!)^{1/p} } ])_{n\in \{1,2,\ldots,N\}}$ in the notation of  \cref{lem:gronwall2})
hence shows that
\begin{align}\begin{split}\label{g10x}
&f_N({\lemdreisechsT})=\left[\int_{\lemdreisechst}^\lemdreisechsT|f_N(t)|^p\,\gamma_0(dt)\right]^{\!\nicefrac{1}{p}}= \varepsilon_N\\
&\le ac^N+b(\beta-\alpha)^{\nicefrac{1}{p}}[1+b(\beta-\alpha)^{\nicefrac{1}{p}}]^{N-1}\left[ \max_{ k \in \{0,1,\ldots,N\} } \tfrac{ \lemdreisechsc^{N-k}}{ (k!)^{1/p} } \right]\left[\sup_{s\in [\lemdreisechst,\lemdreisechsT]}|f_0(s)|\right]\\
&+ab(\beta-\alpha)^{\nicefrac{1}{p}}\sum_{n=1}^{N-1}[1+b(\beta-\alpha)^{\nicefrac{1}{p}}]^{N-n-1}\left[ \max_{ k \in \{0,1,\ldots,N-n\}} \tfrac{ \lemdreisechsc^{N-k}}{ (k!)^{1/p} } \right]
.\end{split}
\end{align}
The fact that for all $l\in \{0,1,\ldots,N\}$ it holds that $\max_{ k \in \{0,1,\ldots,l\}} \tfrac{ \lemdreisechsc^{N-k}}{ (k!)^{1/p} } \le  \max_{ k \in \{0,1,\ldots,N\}} \tfrac{ \lemdreisechsc^{N-k}}{ (k!)^{1/p} }$ therefore implies that
\begin{equation}
\begin{split}
\varepsilon_N&\le a\left[\max_{ k \in \{0,1,\ldots,N\}} \tfrac{ \lemdreisechsc^{N-k}}{ (k!)^{1/p} }\right]+ab(\beta-\alpha)^{\nicefrac{1}{p}}\left[ \max_{ k \in \{0,1,\ldots,N\}} \tfrac{ \lemdreisechsc^{N-k}}{ (k!)^{1/p} } \right]\sum_{n=1}^{N-1}[1+b(\beta-\alpha)^{\nicefrac{1}{p}}]^{N-n-1}\\
&+b(\beta-\alpha)^{\nicefrac{1}{p}}[1+b(\beta-\alpha)^{\nicefrac{1}{p}}]^{N-1}\left[ \max_{ k \in \{0,1,\ldots,N\} } \tfrac{ \lemdreisechsc^{N-k}}{ (k!)^{1/p} } \right]\left[\sup_{s\in [\lemdreisechst,\lemdreisechsT]}|f_0(s)|\right]\\
&=
a\left[\max_{ k \in \{0,1,\ldots,N\}} \tfrac{ \lemdreisechsc^{N-k}}{ (k!)^{1/p} }\right] [1+b(\beta-\alpha)^{\nicefrac{1}{p}}]^{N-1}\\
&
+b(\beta-\alpha)^{\nicefrac{1}{p}}[1+b(\beta-\alpha)^{\nicefrac{1}{p}}]^{N-1}\left[ \max_{ k \in \{0,1,\ldots,N\} } \tfrac{ \lemdreisechsc^{N-k}}{ (k!)^{1/p}} \right]\left[\sup_{s\in [\lemdreisechst,\lemdreisechsT]}|f_0(s)|\right]\\
&=\left[\max_{ k \in \{0,1,\ldots,N\}} \tfrac{ \lemdreisechsc^{N-k}}{ (k!)^{1/p} }\right] [1+b(\beta-\alpha)^{\nicefrac{1}{p}}]^{N-1}\left[a+b(\beta-\alpha)^{\nicefrac{1}{p}}
\left[\sup_{s\in [\lemdreisechst,\lemdreisechsT]}|f_0(s)|\right]\right].
\end{split}
\end{equation}
Combining this with \cref{g10x} establishes \eqref{g09-2x}. The proof of \cref{g09-1} is thus complete.
\end{proof}

\begin{lemma}\label{g090a} Let $N\in\N$, $a,b,\lemdreisechsc \in [0,\infty)$, ${\lemdreisechst}\in \R$, $\lemdreisechsT\in [\lemdreisechst,\infty)$, $p\in [1,\infty)$,
let $f_n\colon [{\lemdreisechst},\lemdreisechsT]\to[0,\infty]$, $n\in \N_0$, be 
measurable, assume $\sup_{s\in [\lemdreisechst,\lemdreisechsT]}|f_0(s)|<\infty$, and assume for all $n\in\{1,2,\ldots,N\}$, $t\in[\lemdreisechst,\lemdreisechsT]$ that
\begin{align}
|f_n(t)|\leq a\lemdreisechsc^n + \sum_{\ell=0}^{n-1}\left[
b\lemdreisechsc^{n-\ell-1}
\left[
\int_t^\lemdreisechsT|f_\ell(s)|^p\,ds\right]^{\!\nicefrac{1}{p}}\right]
\label{g09a}
\end{align}
(cf.\ \cref{g09-2}).
Then
\begin{align}\label{g09a0}
f_N ({\lemdreisechst})\leq\left[ a+b(\lemdreisechsT-\lemdreisechst)^{\nicefrac{1}{p}}\left[\sup_{s\in [\lemdreisechst,\lemdreisechsT]}|f_0(s)|\right]\right]
\left[\max_{ k \in \{0,1,\ldots,N\}} \tfrac{ \lemdreisechsc^{N-k}}{ (k!)^{1/p} }\right]
\left[1+b(\lemdreisechsT-\lemdreisechst)^{\nicefrac{1}{p}}\right]^{N-1}
.\end{align}
\end{lemma}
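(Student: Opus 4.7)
The natural plan is to reduce \cref{g090a} to \cref{g09-1} by the time-reversal substitution $t\mapsto \lemdreisechst+\lemdreisechsT-t$. Concretely, I would define auxiliary functions $\tilde f_n\colon[\lemdreisechst,\lemdreisechsT]\to[0,\infty]$, $n\in\N_0$, by setting for all $n\in\N_0$, $t\in[\lemdreisechst,\lemdreisechsT]$ that $\tilde f_n(t)=f_n(\lemdreisechst+\lemdreisechsT-t)$. Since the map $t\mapsto \lemdreisechst+\lemdreisechsT-t$ is a measurable bijection of $[\lemdreisechst,\lemdreisechsT]$ onto itself, each $\tilde f_n$ is measurable, and one immediately has $\sup_{s\in[\lemdreisechst,\lemdreisechsT]}|\tilde f_0(s)|=\sup_{s\in[\lemdreisechst,\lemdreisechsT]}|f_0(s)|<\infty$ as well as $\tilde f_N(\lemdreisechsT)=f_N(\lemdreisechst)$.

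Next I would apply the hypothesis \eqref{g09a} at the point $\lemdreisechst+\lemdreisechsT-t$ and perform the change of variables $s\mapsto \lemdreisechst+\lemdreisechsT-s$ inside each integral. This turns the inner integral $\int_{\lemdreisechst+\lemdreisechsT-t}^{\lemdreisechsT}|f_\ell(s)|^p\,ds$ into $\int_{\lemdreisechst}^{t}|f_\ell(\lemdreisechst+\lemdreisechsT-u)|^p\,du=\int_{\lemdreisechst}^{t}|\tilde f_\ell(u)|^p\,du$, and it yields, for all $n\in\{1,2,\ldots,N\}$ and $t\in[\lemdreisechst,\lemdreisechsT]$, the inequality
\begin{equation*}
|\tilde f_n(t)|\leq a\lemdreisechsc^n+\sum_{\ell=0}^{n-1}\left[b\lemdreisechsc^{n-\ell-1}\left[\int_{\lemdreisechst}^{t}|\tilde f_\ell(s)|^p\,ds\right]^{\!\nicefrac{1}{p}}\right].
\end{equation*}
This is exactly the recursive assumption \eqref{g09-1a} of \cref{g09-1} applied to the family $(\tilde f_n)_{n\in\N_0}$ with the same constants $a$, $b$, $\lemdreisechsc$, $\lemdreisechst$, $\lemdreisechsT$, $p$.

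Invoking the second (cleaner) bound on the last line of \eqref{g09-2x} in \cref{g09-1} for the family $(\tilde f_n)_{n\in\N_0}$ then yields
\begin{equation*}
\tilde f_N(\lemdreisechsT)\leq\left[a+b(\lemdreisechsT-\lemdreisechst)^{\nicefrac{1}{p}}\left[\sup_{s\in[\lemdreisechst,\lemdreisechsT]}|\tilde f_0(s)|\right]\right]\left[\max_{k\in\{0,1,\ldots,N\}}\tfrac{\lemdreisechsc^{N-k}}{(k!)^{1/p}}\right]\left[1+b(\lemdreisechsT-\lemdreisechst)^{\nicefrac{1}{p}}\right]^{N-1}.
\end{equation*}
Combining this with $\tilde f_N(\lemdreisechsT)=f_N(\lemdreisechst)$ and $\sup_{s\in[\lemdreisechst,\lemdreisechsT]}|\tilde f_0(s)|=\sup_{s\in[\lemdreisechst,\lemdreisechsT]}|f_0(s)|$ then gives \eqref{g09a0}.

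There is essentially no obstacle in this reduction; the only point demanding a bit of care is the correct handling of the endpoints in the change of variables (verifying that $\int_{\lemdreisechst+\lemdreisechsT-t}^{\lemdreisechsT}$ transforms into $\int_{\lemdreisechst}^{t}$ rather than $\int_t^{\lemdreisechsT}$), and the observation that the hypothesis $\sup_{s\in[\lemdreisechst,\lemdreisechsT]}|f_0(s)|<\infty$ carries over verbatim to $\tilde f_0$, so that \cref{g09-1} is genuinely applicable. Everything else is a direct invocation of the already established \cref{g09-1}.
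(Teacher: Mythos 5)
Your proof is correct and takes essentially the same approach as the paper: both reduce \cref{g090a} to \cref{g09-1} by a time-reversal substitution, the only difference being that you reflect about the midpoint via $t\mapsto\lemdreisechst+\lemdreisechsT-t$ on the original interval $[\lemdreisechst,\lemdreisechsT]$, while the paper uses $t\mapsto -t$ on the reflected interval $[-\lemdreisechsT,-\lemdreisechst]$. These are trivially equivalent affine changes of variables, and your change-of-variables verification and endpoint bookkeeping are accurate.
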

\begin{proof}[Proof of \cref{g090a}]
Note that \cref{g09-1} (applied with 
$\lemdreisechst\defeq -\lemdreisechsT$, $\lemdreisechsT\defeq -\lemdreisechst$, $( f_n )_{ n \in \N_0 }\defeq (([-\lemdreisechsT,-\lemdreisechst] \ni t\mapsto f_n(-t)\in [0,\infty]))_{ n \in \N_0 }$
in the notation of \cref{g09-1}) 
establishes \cref{g09a0}. The proof of \cref{g090a} is thus complete.
\end{proof}

\begin{lemma}\label{g09}Let $M,N\in\N$, 
$T\in (0,\infty)$, ${\tau}\in [0,T]$,
$a,b\in [0,\infty)$, $p\in [1,\infty)$, let $f_n\colon [{\tau},T]\to[0,\infty]$, $n\in \N_0$, be 
measurable, assume $\sup_{s\in [\tau,T]}|f_0(s)|<\infty$, and assume for all $n\in\{1,2,\ldots,N\}$, $t\in[\tau,T]$ that
\begin{align}
|f_n(t)|\leq \frac{a}{\sqrt{M^n}}+ \sum_{\ell=0}^{n-1}\left[
\frac{b}{\sqrt{M^{n-\ell-1}}}
\left[
\int_t^T|f_\ell(s)|^p\,ds\right]^{\!\nicefrac{1}{p}}\right]
\label{g09z}
\end{align}
(cf.\ \cref{g09-2}).
Then
\begin{align}
f_N({\tau})\leq\left[ a+b(T-\tau)^{\nicefrac{1}{p}}\left[\sup_{s\in [\tau,T]}|f_0(s)|\right]\right]\exp\!\left(\frac{M^{\nicefrac{p}{2}}}{p}\right)M^{-\nicefrac{N}{2}}
\left[1+b(T-\tau)^{\nicefrac{1}{p}}\right]^{N-1}
.\end{align}
\end{lemma}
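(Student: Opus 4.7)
My plan is to reduce \cref{g09} directly to \cref{g090a} and then estimate the combinatorial factor. First I would apply \cref{g090a} with the constant $c$ specialised to $c\defeq M^{-\nicefrac{1}{2}}$. Under this choice $c^n=M^{-\nicefrac{n}{2}}=1/\sqrt{M^n}$ and $c^{n-\ell-1}=1/\sqrt{M^{n-\ell-1}}$, so the hypothesis \eqref{g09z} of \cref{g09} becomes exactly the hypothesis \eqref{g09a} of \cref{g090a} (with $\alpha\defeq\tau$, $\beta\defeq T$, and the same $a,b,p,N$). \Cref{g090a} therefore yields
\begin{equation}
f_N(\tau)\leq \Bigl[a+b(T-\tau)^{\nicefrac{1}{p}}\bigl[\sup\nolimits_{s\in[\tau,T]}|f_0(s)|\bigr]\Bigr]\left[\max_{k\in\{0,1,\ldots,N\}}\frac{M^{-(N-k)/2}}{(k!)^{1/p}}\right]\bigl[1+b(T-\tau)^{\nicefrac{1}{p}}\bigr]^{N-1}.
\end{equation}

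The remaining task is purely algebraic: I would bound the maximum by the claimed exponential factor. Factoring out $M^{-\nicefrac{N}{2}}$ gives
\begin{equation}
\max_{k\in\{0,1,\ldots,N\}}\frac{M^{-(N-k)/2}}{(k!)^{1/p}}=M^{-\nicefrac{N}{2}}\left[\max_{k\in\{0,1,\ldots,N\}}\left(\frac{M^{kp/2}}{k!}\right)^{\!\nicefrac{1}{p}}\right]=M^{-\nicefrac{N}{2}}\left[\max_{k\in\{0,1,\ldots,N\}}\frac{M^{kp/2}}{k!}\right]^{\!\nicefrac{1}{p}}.
\end{equation}
Then, using that each term of a non-negative series is dominated by the full series, I would estimate
\begin{equation}
\max_{k\in\{0,1,\ldots,N\}}\frac{M^{kp/2}}{k!}\leq \sum_{k=0}^{\infty}\frac{M^{kp/2}}{k!}=\exp\!\bigl(M^{\nicefrac{p}{2}}\bigr),
\end{equation}
so that the maximum is at most $M^{-\nicefrac{N}{2}}\exp(M^{\nicefrac{p}{2}}/p)$. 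Combining this with the display from \cref{g090a} yields the desired inequality.

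There is no real obstacle here; the only subtlety is correctly matching up the exponents of $M$ with the exponents of $c$ and making sure the factor $1/p$ lands in the right place, which follows from pulling the $1/p$-th power outside the maximum (valid since $x\mapsto x^{\nicefrac{1}{p}}$ is monotone on $[0,\infty)$).
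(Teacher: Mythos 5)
Your proposal is correct and follows essentially the same route as the paper: apply \cref{g090a} with $c\defeq M^{-\nicefrac{1}{2}}$, $\lemdreisechst\defeq\tau$, $\lemdreisechsT\defeq T$, and then bound the resulting maximum by $M^{-\nicefrac{N}{2}}\exp(M^{\nicefrac{p}{2}}/p)$. The only (harmless) difference is that you spell out the elementary estimate $\max_k M^{kp/2}/k!\le\exp(M^{p/2})$ via the exponential series, which the paper simply states as a known fact.
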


\begin{proof}[Proof of \cref{g09}]
Note that \cref{g090a} (applied
with
$\lemdreisechsc\defeq M^{-1/2}$,
$\lemdreisechst\defeq\tau$,
$\lemdreisechsT\defeq T$
 in the notation of
\cref{g090a}) assures
that 
\begin{align}
f_N ({\tau})\leq\left[ a+b(T-\tau)^{\nicefrac{1}{p}}\left[\sup_{s\in [\tau,T]}|f_0(s)|\right]\right]\left[ \sup_{ k \in \N_0 } \tfrac{ M^{\nicefrac{k}{2}} }{  (k!)^{1/p} } \right]M^{-\nicefrac{N}{2}}
\left[1+b(T-\tau)^{\nicefrac{1}{p}}\right]^{N-1}
.\end{align}
The fact that $\sup_{ k \in \N_0 } (\tfrac{ M^{k/2} }{  (k!)^{1/p} })\le \exp\bigl(\frac{M^{p/2}}{p}\bigr)$ hence proves that 
\begin{align}
f_N ({\tau})\leq\left[ a+b(T-\tau)^{\nicefrac{1}{p}}\left[\sup_{s\in [\tau,T]}|f_0(s)|\right]\right]\exp\!\left(\frac{M^{\nicefrac{p}{2}}}{p}\right)M^{-\nicefrac{N}{2}}
\left[1+b(T-\tau)^{\nicefrac{1}{p}}\right]^{N-1}.
\end{align}
The proof of \cref{g09} is thus complete.
\end{proof}

\subsection{Non-recursive error bounds for MLP approximations}
\label{x010}
\begin{corollary}[Error estimate]\label{x01}Assume \cref{t26} and let $N,M\in\N$,  $\tau\in [0,T]$. Then
\begin{align}\begin{split}
&\sup_{x\in\R^d}\left[\frac{
\E\!\left[|{\bigV}_{N,M}^{0}(\tau,x)-\smallV(\tau,x)|^2\right]}{\lyaV(x)}\right]^{\!\nicefrac{1}{2}}\leq 2 e^{M/2}M^{-N/2}(1+2TL)^{N-1}e^{\growrate (T-\tau)/2}
\\& \cdot 
\left[\sup_{t\in[0,T]}\sup_{x\in\R^d}\left[\frac{\max\{|T(\funcF(0))(t,x)|,|g(x)|\}}{\sqrt{\lyaV(x)}}\right]+TL\sup_{t\in[0,T]}\sup_{x\in\R^d}\left[
\frac{|\smallV(t,x)|}{\sqrt{\lyaV(x)}}\right]\right].
\end{split}\end{align}
\end{corollary}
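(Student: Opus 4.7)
The plan is to reduce the statement to a direct application of the function-space Gronwall inequality \cref{g09} applied to the per-level error recursion \cref{k01}. I would first introduce the weighted $L^2$-error
\[
u_n(t):=\sup_{x\in\R^d}\left[\frac{e^{\growrate t}\,\E[|\bigV^{0}_{n,M}(t,x)-\smallV(t,x)|^2]}{\lyaV(x)}\right]^{\!\nicefrac{1}{2}},\qquad n\in\N_0,\ t\in[0,T],
\]
together with its non-increasing majorant $\hat u_n(t):=\sup_{r\in[t,T]}u_n(r)$. The base case is explicit because $\bigV^{0}_{0,M}\equiv 0$, which gives $\sup_{s\in[\tau,T]}\hat u_0(s)\le e^{\growrate T/2}\sup_{t,x}|\smallV(t,x)|/\sqrt{\lyaV(x)}<\infty$ by the assumption that $|\smallV|^2\le c\lyaV$.

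The key preparatory step is to promote the pointwise bound \eqref{k01_b} to a bound on $\hat u_n$. Every term on the right of \eqref{k01_b} is non-increasing in $t$ (both $(T-t)^{1/2}$ and the domain of integration $\int_t^T$ shrink as $t$ grows), and the integrand $\sup_{r\in[s,T]}\sup_x[\lyaV(x)^{-1}e^{\growrate r}\E[\,\cdot\,]]$ is exactly $[\hat u_\ell(s)]^2$. Taking $\sup_{r\in[t,T]}$ on both sides of \eqref{k01_b} and bounding $(T-t)^{1/2}\le T^{1/2}$ therefore yields
\[
\hat u_n(t)\le\frac{2 e^{\growrate T/2}\mathcal{C}}{\sqrt{M^n}}+\sum_{\ell=0}^{n-1}\frac{2T^{\nicefrac{1}{2}}\LipConstF}{\sqrt{M^{n-\ell-1}}}\left[\int_t^T|\hat u_\ell(s)|^2\,ds\right]^{\!\nicefrac{1}{2}}
\]
for all $n\in\N$ and $t\in[\tau,T]$, where $\mathcal{C}:=\sup_{t,x}\max\{|T(\funcF(0))(t,x)|,|g(x)|\}/\sqrt{\lyaV(x)}$. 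This is precisely the hypothesis of \cref{g09} with $p=2$, $a=2 e^{\growrate T/2}\mathcal{C}$ and $b=2T^{\nicefrac{1}{2}}\LipConstF$.

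Invoking \cref{g09} (noting $\exp(M^{p/2}/p)=e^{M/2}$ for $p=2$ and $b(T-\tau)^{1/2}=2\LipConstF\sqrt{T(T-\tau)}\le 2TL$) produces
\[
\hat u_N(\tau)\le 2 e^{\growrate T/2}\left[\mathcal{C}+TL\sup_{t,x}\frac{|\smallV(t,x)|}{\sqrt{\lyaV(x)}}\right]e^{M/2}\,M^{-N/2}\,(1+2TL)^{N-1}.
\]
To finish, I would divide by $e^{\growrate\tau/2}$: the left-hand side of the corollary equals $e^{-\growrate\tau/2}u_N(\tau)\le e^{-\growrate\tau/2}\hat u_N(\tau)$, and this transforms the $e^{\growrate T/2}$-prefactor into the advertised $e^{\growrate(T-\tau)/2}$. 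The only non-routine step is the envelope argument in the second paragraph: one has to verify that the inner $\sup_{r\in[s,T]}$ in \eqref{k01_b} really matches $[\hat u_\ell(s)]^2$ and that all $t$-dependent factors on the right of \eqref{k01_b} are monotone in $t$, so that no additional slack appears when one passes from $u_n$ to $\hat u_n$. Once that is in place, the remainder is a direct quotation of \cref{g09}.
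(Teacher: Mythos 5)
Your proposal is correct and follows essentially the same route as the paper's proof: the paper also introduces the non-increasing envelope (there called $f_n$, identical to your $\hat u_n$), uses \cref{k01} to obtain the recursive bound, applies the Gronwall-type \cref{g09} with $p=2$, and absorbs the $e^{\rho\tau/2}$ factor at the end to turn $e^{\rho T/2}$ into $e^{\rho(T-\tau)/2}$. The only cosmetic difference is that you separate the pointwise quantity $u_n$ from its majorant $\hat u_n$ before taking the supremum, whereas the paper works directly with $f_n=\hat u_n$; both handle the monotonicity and the final exponential cancellation in the same way.
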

\begin{proof}[Proof of \cref{x01}]
Throughout this proof 
let
$a_1,a_2\in \R$ satisfy
\begin{equation}
a_1= 2e^{\growrate T/2}\left[\sup_{t\in[0,T]}\sup_{x\in\R^d}\left(\frac{\max\{|T(\funcF(0))(t,x)|,|g(x)|\}}{\sqrt{\lyaV(x)}}\right)\right]
\qquad\text{and}\qquad a_2=
2\sqrt{T}\LipConstF,
\end{equation}
let
 $f_n\colon [\tau,T]\to[0,\infty]$,
$n\in\{ 0, 1, \ldots, N \}$, satisfy for all $n\in\{ 0, 1, \ldots, N \}$,
$t\in [\tau,T]$ that
\begin{align}\begin{split}
&f_n(t)= \sup_{s\in[t,T]}\sup_{x\in\R^d}
\Bigl[
e^{\growrate s}|\lyaV(x)|^{-1}\E\big[|{\bigV}_{n,M}^{0}(s,x)-\smallV(s,x)|^2\big]\Bigr]^{\!\nicefrac{1}{2}}.\label{k10}
\end{split}\end{align}
Observe that \eqref{k10} ensures that for all $n\in \{ 0, 1, \ldots, N \}$ it holds that $f_n$ is
measurable. 
Furthermore, note that \eqref{k10} and \cref{k01} imply that for all $n\in \{ 1, 2, \ldots, N \}$, $t\in[\tau,T]$ it holds that
\begin{align}\begin{split}
|f_n(t)|&\leq 
\sup_{r\in[t,T]}\left[
\frac{a_1}{\sqrt{M^n}}+ \left[\sum_{\ell=0}^{n-1}
\frac{a_2}{\sqrt{M^{n-\ell-1}}}
\left[
\int_r^T|f_\ell(s)|^2\,ds\right]^{\!\nicefrac{1}{2}}\right]\right]\\
&\leq 
\frac{a_1}{\sqrt{M^n}}+ \left[\sum_{\ell=0}^{n-1}
\frac{a_2}{\sqrt{M^{n-\ell-1}}}
\left[
\int_t^T|f_\ell(s)|^2\,ds\right]^{\!\nicefrac{1}{2}}\right].
\end{split}\end{align}
\cref{g09}, \eqref{k10}, and the fact that for all $t\in[\tau,T]$ it holds that
\begin{align}
f_0(t)=\sup_{s\in[t,T]}\sup_{x\in\R^d}\left(
\frac{e^{\growrate s/2}|\smallV(s,x)|}{\sqrt{\lyaV(x)}}\right)
\leq e^{\growrate T/2}\left[ \sup_{s\in[0,T]}\sup_{x\in\R^d}
\left(\frac{|\smallV(s,x)|}{\sqrt{\lyaV(x)}}\right]\right)<\infty
\end{align}
 therefore demonstrate that
\begin{align}\begin{split}
&\sup_{t \in[\tau,T]}\sup_{x\in\R^d}
\Bigl[
e^{\growrate t}\lyaV(x)^{-1}\E\!\left[\left|{\bigV}_{N,M}^{0}(t,x)-\smallV(t,x)\right|^2\right]\Bigr]^{\!\nicefrac{1}{2}}=
f_N({\tau})\\
&\leq \left[ a_1+a_2\sqrt{T}\textstyle\sup_{t\in [{\tau},T)}  |f_0(t)|\right]e^{M/2}M^{-N/2}(1+a_2\sqrt{T})^{N-1}
\\
&\leq e^{M/2}M^{-N/2}(1+2TL)^{N-1}
\\& \cdot
\left[2e^{\growrate T/2}\sup_{t\in[0,T]}\sup_{x\in\R^d}\left[\frac{\max\{|T(\funcF(0))(t,x)|,|g(x)|\}}{\sqrt{\lyaV(x)}}\right]+2TLe^{\growrate T/2} \sup_{t\in[0,T]}\sup_{x\in\R^d}\left[
\frac{|\smallV(t,x)|}{\sqrt{\lyaV(x)}}\right]\right].
\end{split}\end{align}
This completes the proof of \cref{x01}.
\end{proof}


\section{Computational complexity analysis for MLP approximations}\label{sec:main_result}

In this section we combine the existence, uniqueness, and regularity properties for solutions of stochastic fixed point equations, which we have established in \cref{s02}, with the error analysis for MLP approximations for stochastic fixed point equations, which we have established in \cref{sec:rate} (see \cref{x01} in \cref{x010}), to obtain in \cref{m01_x} in \cref{sec:fix_dim} a computational complexity analysis for MLP approximations 
for semilinear second-order PDEs 
in fixed space dimensions. In \cref{sec:var_dim} we combine the computational complexity analysis in \cref{m01_x} with the elementary auxiliary result in \cref{a01} to obtain in \cref{m01c}  a computational complexity analysis for MLP approximations 
for semilinear second-order PDEs 
in variable space dimensions.

\subsection{Error bounds for MLP approximations involving Euler-Maruyama approximations}

\renewcommand{\lyaV}{\varphi}
\begin{proposition}
\label{m01}
\newcommand{\bfdelta}{\boldsymbol{\delta}}
Let $d,m,M,K\in\N$,
  $\beta,b,c\in [1,\infty)$,  
$p\in[2\beta,\infty)$, 
$\lyaV\in C^2(\R^d, [1,\infty))$,
$\funcG\in C( \R^d,\R)$,
$\mu\in C( \R^{d} ,\R^{d})$, $\sigma=(\sigma_1,\sigma_2,\ldots,\sigma_m)\in C( \R^{d},\R^{d\times m})$,
$T,\tau_0, \tau_1,\ldots, \tau_K\in \R$ satisfy $0=\tau_0<\tau_1<\ldots<\tau_K=T$,
let $\lVert\cdot \rVert\colon \R^{d}\to[0,\infty)$ be the standard norm on $\R^d$, 
 let
$\smallF\colon [0,T]\times \R^{d}\times \R \to\R$ be 
measurable,
let $\funcF\colon\R^{[0,T]\times\R^d}\to\R^{[0,T]\times\R^d}$ satisfy for all
$t\in [0,T]$,
$x\in \R^d$,
 $v\in \R^{[0,T]\times\R^d}$ that
$
(\funcF(v))(t,x)= f(t,x,v(t,x)),
$
assume  for all  $x,y\in\R^d$, $z\in \R^d\backslash \{0\}$, $t\in[0,T]$,  $v,w\in\R $  that 
\begin{gather}
\label{v05b}
\max\bigl\{\tfrac{|( \lyaV'(x))(z)|}{(\lyaV(x))^{(p-1)/p}\|z\|}, \tfrac{|( \lyaV'' (x))(z,z)|}{(\lyaV(x))^{(p-2)/p}\|z\|^2},
\tfrac{c\|x\|+\|\mu(0)\|}{(\lyaV(x))^{1/p}}, 
\tfrac{c\|x\|+[ \sum_{ i=1 }^m \| \sigma_i(0) \|^2 ]^{ 1/2 }} {(\lyaV(x))^{1/p}} \bigr\}
\leq c,
\\
\label{v04b}
\max\bigl\{|Tf(t,x,0)|,|g(x)|\bigr\}\leq b(\lyaV(x))^{\beta/p},
\\
\label{v07}
\max\bigl\{|g(x)-g(y)|,T|f(t,x,v)-f(t,y,w)|\bigr\}\leq cT|v-w|+ \tfrac{(\lyaV(x)+\lyaV(y))^{\beta/p}
\|x-y\|}{T^{1/2}b^{-1}}
,
\\
\max\bigl\{\|\mu(x)-\mu(y)\|^2,
 \smallsum_{ i=1 }^m \| \sigma_i(x)-\sigma_i(y) \|^2 
\bigr\}
\leq c^2\|x-y\|^2,\label{v03-2}
\end{gather}
let $(\Omega,\mathcal{F},\P, (\F_t)_{t\in[0,T]})$ be a filtered probability space which satisfies the usual conditions\footnote{Let $T \in [0,\infty)$ and let ${\bf \Omega} = (\Omega,\mathcal{F},\P, (\F_t)_{t\in[0,T]})$ be 
a filtered probability space. 
Then we say that ${\bf \Omega}$
satisfies the usual conditions if and only if 
it holds that $\{ A\in \mathcal F: \P(A)=0 \} \subseteq \F_0$ and $\forall \, t\in [0,T) \colon \mathbb{F}_t 
= \cap_{ s \in (t,T] } \F_s$.},
let 
$  \Theta = \bigcup_{ n \in \N }\! \Z^n$,
let $\unif^\theta\colon \Omega\to[0,1]$, $\theta\in \Theta$, be i.i.d.\ random variables,
assume for all $t\in(0,1)$ that $\P(\unif^0\le t)=t$, 
let $\uniform^\theta\colon [0,T]\times \Omega\to [0, T]$, $\theta\in\Theta$, satisfy 
for all $\theta\in \Theta$, $t\in [0,T]$ that 
$\uniform^\theta _t = t+ (T-t)\unif^\theta$,
 let $W^\theta\colon [0,T]\times\Omega \to \R^{m}$, $\theta\in\Theta$, be i.i.d.\ standard $(\F_{t})_{t\in[0,T]}$-Brownian motions, 
assume that
$(\unif^\theta)_{\theta\in\Theta}$ and
$(W^\theta)_{\theta\in\Theta}$ are independent,
let
$\rdownni{\cdot}{\tau}\colon \R\to \R $ satisfy for all $t\in \R$ that
$\rdownni{t}{\tau}=\max\bigl(\{\tau_0,\tau_1, \ldots, \tau_n\} \cap ((-\infty,t)\cup \{\tau_0\}) \bigr)$,
for every 
$\theta\in\Theta$,
$t\in[0,T]$, $x\in\R^d$
let
$
Y^{\theta,x}_{t}=(Y^{\theta,x}_{t,s})_{s\in[t,T]}\colon[t,T]\times\Omega\to\R^d$ 
 satisfy for all $s\in[t,T]$ that $Y_{t,t}^{\theta,x}=x$ and
\begin{align}\begin{split}\label{v01}
&Y_{t,s}^{\theta,x} - Y_{t,\max\{t, \rdownni{s}{\tau}\}}^{\theta,x}\\
&=  \mu(Y_{t,\max\{t, \rdownni{s}{\tau}\}}^{\theta,x})\bigl(s-\max\{t, \rdownni{s}{\tau}\}\bigr)+\sigma(Y_{t,\max\{t, \rdownni{s}{\tau}\}}^{\theta,x})\bigl(W^\theta_{s} -W^\theta_{\max\{t, \rdownni{s}{\tau}\}} \bigr),
\end{split}\end{align}
and let
$ 
  {\bigV}_{ n}^{\theta} \colon [0, T] \times \R^d \times \Omega \to \R
$, $n\in\Z$, $\theta\in\Theta$,
satisfy 
for all 
$\theta\in\Theta $,
$n\in \N_0$, 
$ t \in [0,T]$, $x\in\R^d $
that 
\begin{equation}\label{v02}
\begin{split}
  &{\bigV}_{n}^{\theta}(t,x)
=
  \frac{\1_{\N}(n)}{M^n}
 \sum_{i=1}^{M^n} 
      \funcG \big(Y^{(\theta,0,-i),x}_{t,T}\big)
 \\
&  +
  \sum_{\ell=0}^{n-1} \frac{(T-t)}{M^{n-\ell}}
    \left[\sum_{i=1}^{M^{n-\ell}}
      \bigl(\funcF\bigl({\bigV}_{\ell}^{(\theta,\ell,i)}\bigr)-\1_{\N}(\ell)\funcF\bigl( {\bigV}_{\ell-1}^{(\theta,-\ell,i)}\bigr)\bigr)
      \bigl(\uniform_t^{(\theta,\ell,i)},Y_{t,\uniform_t^{(\theta,\ell,i)}}^{(\theta,\ell,i),x}\bigr)
    \right].
\end{split}
\end{equation}
Then
\begin{enumerate}[(i)]
\item \label{k20}
for every $t\in[0,T]$, $\theta\in\Theta$
there exists  
  an up to indistinguishability unique continuous random  field 
$
( X^{ \theta, x }_{ t, s } )_{ (s,x)\in[t,T]\times \R^d} \colon
[t,T]\times\R^d\times \Omega\to\R^d$ 
which satisfies that for all $x\in\R^d$ it holds
that
$( X^{ \theta, x }_{ t, s } )_{s\in[t,T] }$
is $(\F_{s})_{s\in[t,T]}$-adapted and which satisfies that for all
$s\in[t,T]$, $x\in\R^d$
it holds 
$\P$-a.s.\ that
\begin{align}
X_{t,s}^{\theta,x}= x+\int_{t}^s \mu(X_{t,r}^{\theta,x})\,dr +\int_t^s \sigma(X_{t,r}^{\theta,x})\,dW_r^\theta,
\end{align}
\item \label{k20b}
it holds
for all $\theta\in\Theta$, $t\in[0,T]$, $s\in[t,T]$, $r\in[s,T]$, $x\in\R^d$ that
$
\P\bigl(X_{s,r}^{\theta,X_{t,s}^{\theta,x}}= X_{t,r}^{\theta,x}\bigr)=1
$,
\item \label{k21}there exists a unique
measurable
$u\colon [0,T]\times\R^d\to\R$ which satisfies
for all $t\in[0,T]$, $x\in \R^d$ that
$
\bigl(\sup_{s\in[0,T],y\in\R^d} [{|\smallU(s,y)|}{(\lyaV(y))^{-\beta /p}}]\bigr)+\int_{t}^{T}\E\bigl[|f(s,X_{t,s}^{0,x},u(s,X_{t,s}^{0,x}))| \bigr]\,ds
+
\E\bigl[|g(X^{0,x}_{t,T})|\bigr] \allowbreak < \infty$
and
\begin{align}
u(t,x)=\E\!\left[g(X^{0,x}_{t,T})\right]+\int_{t}^{T}\E\!\left[f(s,X_{t,s}^{0,x},u(s,X_{t,s}^{0,x}))\right]ds ,
\end{align}%
\item \label{k22}it holds for all $t\in[0,T]$, $x\in\R^d$, 
$n\in\N_0$, $\theta \in \Theta$
 that ${\bigV}_{n}^{\theta}(t,x)$ is 
 measurable, and
\item \label{k22b} 
 it holds for all $t\in[0,T]$, $x\in\R^d$, 
$n\in\N_0$ that
\begin{align}\small\begin{split}
&\frac{\big(\E\big[|{\bigV}_{n}^{0}(t,x)-\smallU(t,x)|^2\big]\big)^{\!\nicefrac{1}{2}}}
{12bc^2 \! \exp(9c^3T)
|\lyaV(x)|^{(\beta+1)/p}}
\leq 
\left[
\frac{ \exp( 2 n c T + \frac{ M }{ 2 } ) }{ M^{ \nicefrac{n}{2} } }
+\max\limits_{i\in \{1,2,\ldots,K\}}\left(\tfrac
{|\tau_{i}-\tau_{i-1}|^{\nicefrac{1}{2}}}{T^{\nicefrac{1}{2}}}\right)\right].\end{split}
\end{align}
\end{enumerate}
\end{proposition}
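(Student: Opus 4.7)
The plan is to reduce \cref{m01} to the auxiliary results established earlier in the paper: \cref{b01} supplies existence/uniqueness of stochastic fixed point solutions, \cref{x01} bounds the MLP error relative to a fixed point of the \emph{discretised} stochastic fixed point equation, and \cref{d08} controls the perturbation error between the exact and discretised fixed points. Item \cref{k20} follows from standard existence and uniqueness theory for SDEs with globally Lipschitz continuous drift and diffusion (guaranteed by \eqref{v03-2}), together with the linear growth bound implicit in \eqref{v05b}; item \cref{k20b} is then the usual flow property for such SDEs. Item \cref{k22} is proved by induction on $n$: for $n=0$ the approximation vanishes, and the inductive step uses \eqref{v02}, the measurability of $f$, $g$, $\mu$, $\sigma$, and of the Euler--Maruyama scheme $Y^{\theta,x}$, combined with \cref{lem:fubini}-type arguments.

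For item \cref{k21}, I apply \cref{b01} with $\cO=\R^d$, $V=b\,\lyaV^{\beta/p}$, and $L=cT$. The essential preparatory estimate is a Lyapunov moment bound of the form $\E[\lyaV(X^{0,x}_{t,s})]\le e^{\rho_1(s-t)}\lyaV(x)$, which I would obtain by applying It\^o's formula to $\lyaV(X^{0,x}_{t,\cdot})$ and using \eqref{v05b} to control $\lyaV'\cdot\mu$ and $\trace(\sigma\sigma^{*}\lyaV'')$ in terms of $\lyaV$ itself. The same argument, carried out along the piecewise-constant grid $(\tau_k)$, produces an analogous bound for the Euler--Maruyama process $Y^{\theta,x}$, possibly with a different exponential constant. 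Together with \eqref{v04b} and \eqref{v07}, this verifies all hypotheses of \cref{b01}, yielding a unique fixed point $u$ satisfying the required integral equation and the growth estimate via \cref{t99}. The same argument applied with $X$ replaced by $Y$ produces a unique $\bar u\colon[0,T]\times\R^d\to\R$ solving the analogous stochastic fixed point equation driven by the Euler--Maruyama process.

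For item \cref{k22b}, the core estimate, I decompose
\begin{equation*}
\big(\E\big[|\bigV_n^0(t,x)-u(t,x)|^2\big]\big)^{1/2}\le \big(\E\big[|\bigV_n^0(t,x)-\bar u(t,x)|^2\big]\big)^{1/2}+|\bar u(t,x)-u(t,x)|.
\end{equation*}
The first summand is handled by placing the Euler--Maruyama process $Y$ into \cref{t26} and invoking \cref{x01}: the hypotheses of \cref{t26} are precisely the Lyapunov and growth bounds verified above, and \cref{x01} gives the $e^{M/2}M^{-n/2}(1+2Tc)^{n-1}$ factor times $\sup\bigl[|g|+|Tf(\cdot,0)|+|\bar u|\bigr]\,\lyaV^{-1/2}$. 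The bound on $\bar u$ furnished by \cref{v18} in \cref{b01} then produces the $b\lyaV^{\beta/p}$ contribution, which after squaring yields the $\lyaV^{(\beta+1)/p}$ prefactor. For the second summand I apply \cref{d08} with $X^{x,1}=X^{0,x}$, $X^{x,2}=Y^{0,x}$, $\smallU_1=u$, $\smallU_2=\bar u$, and $\psi$ a suitable Lyapunov-type function (for instance $\psi=c'\lyaV$). The perturbation parameter $\delta$ in \cref{d08} is then identified with a bound for $(\E\|X^{0,x}_{t,s}-Y^{0,x}_{t,s}\|^q)^{1/q}$, which by standard strong-convergence theory for the Euler--Maruyama scheme under Lipschitz coefficients is of order $\max_{i}|\tau_i-\tau_{i-1}|^{1/2}$ times a power of $\lyaV(x)$. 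Substituting this into the conclusion of \cref{d08} and combining with the first summand gives the claimed estimate.

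The principal technical obstacle is the derivation of the Lyapunov moment bounds for both $X$ and $Y$ and the Euler--Maruyama strong error in the $\lyaV$-weighted $L^q$-norm, since every constant must be tracked carefully enough to produce the explicit prefactor $12bc^2\exp(9c^3T)\lyaV(x)^{(\beta+1)/p}$. The growth-rate parameter $\rho$ appearing in \cref{t26} and \cref{d08} has to be identified, once and for all, with an explicit multiple of $c$ that is independent of $d$, $p$, and $\beta$, and the Gronwall step in \cref{b01} and \cref{d08} has to be combined so that the resulting exponent matches $9c^3T$. All other steps are then routine assembly of the cited results.
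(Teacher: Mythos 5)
Your plan follows essentially the same route as the paper: items \eqref{k20}--\eqref{k22} via standard Lipschitz SDE theory and the measurability results, item \eqref{k21} via \cref{b01} with a Lyapunov weight built from It\^o's formula and \eqref{v05b} for both $X$ and the Euler--Maruyama scheme (yielding also the discretised fixed point $\bar u$), and item \eqref{k22b} via the triangle-inequality split into the MLP-versus-$\bar u$ error handled by \cref{x01} and the fixed-point perturbation $|u-\bar u|$ handled by \cref{d08} with $\delta$ of order $\max_i|\tau_i-\tau_{i-1}|^{1/2}$. The work you flag as remaining (tracking explicit constants, verifying the disintegrated hypothesis \eqref{b10} by combining the Euler strong error with the Lipschitz flow estimate, and the $\lyaV$-weighted strong-error bound, which the paper obtains from \cite{HJ20}) is precisely what the paper's proof carries out, so the approach is the same.
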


\begin{proof}[Proof of \cref{m01}]\sloppy
\newcommand{\bfdelta}{\boldsymbol{\tau}}
\newcommand{\Xprop}{\mathfrak{X}}
\newcommand{\uprop}{\mathscr{u}}
\newcommand{\linbm}{\mathfrak{Y}}
Throughout this proof let $\tripleNorm{\cdot}\colon (\bigcup_{L,N\in\N}\R^{L\times N})\to[0,\infty)$ satisfy for all $L,N\in\N$,
$
A=(A_{i,j})_{ (i,j) \in \{1,2,\ldots,L\}\times \{1,2,\ldots,N\}}\in \R^{L\times N}
$  
that $\tripleNorm{A}=[\sum_{i=1}^{L}\sum_{j=1}^{N}|A_{ij}|^2]^{1/2}$, 
let
$
\size{\tau} =\max_{i\in \{1,2,\ldots,K\}} |\tau_{i}-\tau_{i-1}|
$,
let 
$\Delta\subseteq [0,T]^2$ satisfy $\Delta= \{(t,s)\in[0,T]^2\colon t\leq s\}$,
let $\Xprop^k=(\Xprop^{k,x}_{t,s})_{(t,s,x)\in \Delta \times \R^d}\colon \Delta \times \R^d \times \Omega \to \R^d$, $k\in \{0,1\}$, satisfy for all $t\in [0,T]$, $s\in [t,T]$, $x\in \R^d$ that $\Xprop^{0,x}_{t,s}=X^{0,x}_{t,s}$ and $\Xprop^{1,x}_{t,s}=Y^{0,x}_{t,s}$,
let $\linbm^x=(\linbm^x_t)_{t\in [0,T]}\colon [0,T]\times \Omega \to \R^d$, $x\in \R^d$, satisfy for all $x\in \R^d$, $t\in [0,T]$ that $\linbm^x_t=x+\mu(x)t+\sigma(x)W_{t}$, and let $\tau^x_n\colon \Omega \to [0,T]$, $x\in \R^d$, $n\in \N$, satisfy for all $n\in \N$, $x\in \R^d$ that
$\tau^x_n=\inf(\{T\}\cup \{t\in [0,T]\colon [\sup_{s\in [0,t]}\lyaV(\linbm^x_s)]+\int_0^t\sum_{i=1}^m |(\lyaV'(\linbm^x_s))(\sigma_i(x))|^2\,ds\ge n\} )$. 
Observe that
 \eqref{v03-2} establishes \cref{k20,k20b}
 (cf., e.g., Rogers \& Williams~\cite[Theorem~13.1 and Lemma~13.6]{RogersWilliams2000b}).
Next note that \eqref{v03-2} and
\eqref{v05b} imply that for all $x\in\R^d$ it holds that
\begin{equation}
\begin{split}
\max\{\|\mu(x)\|,\vertiii{\sigma(x)}\}&\leq 
\max\{\|\mu(x)-\mu(0)\|+\|\mu(0)\|,\vertiii{\sigma(x)-\sigma(0)}
+\vertiii{\sigma(0)}\}\\
&\leq 
\max\{c\|x\|+\|\mu(0)\|,c\|x\|+\vertiii{\sigma(0)}\}\leq 
c(\lyaV(x))^{\frac{1}{p}}.\label{r11}
\end{split}
\end{equation}
This, \eqref{v05b}, and the fact that
$\forall\,a,b\in [0,\infty),\lambda\in (0,1)\colon a^\lambda b^{1-\lambda}\leq \lambda a+(1-\lambda)b$ imply that for all $x,y\in\R^d$ it holds that 
\begin{align}\small\begin{split}
&\left|( \lyaV'(y))(\mu(x))\right|+\frac{1}{2}\left|\sum_{k=1}^{m}
(\lyaV''(y))(\sigma_k(x),\sigma_k(x))\right|\\
&
\leq
c(\lyaV(y))^{1-\frac{1}{p}}
\|\mu(x)\|+\tfrac{c}{2} 
(\lyaV(y))^{1-\frac{2}{p}}\sum_{k=1}^{m}
\|\sigma_k(x)\|^2=
c(\lyaV(y))^{1-\frac{1}{p}}
\|\mu(x)\|+\tfrac{c}{2} 
(\lyaV(y))^{1-\frac{2}{p}}\vertiii{\sigma(x)}^2
\\
&
\leq  c(\lyaV(y))^{1-\frac{1}{p}}c(\lyaV(x))^{\frac{1}{p}}+\tfrac{c}{2}(\lyaV(y))^{1-\frac{2}{p}}c^2(\lyaV(x))^\frac{2}{p}
\\
&
\leq
c^2\left[\left(1-\tfrac{1}{p}\right)\lyaV(y)+\tfrac{1}{p}\lyaV(x)\right]+ \tfrac{c^3}{2}\left[\left(1-\tfrac{2}{p}\right)\lyaV(y)+\tfrac{2}{p}\lyaV(x)\right]\\
&\leq \left[c^3\left(1-\tfrac{1}{p}\right)+\tfrac{c^3}{2}\left(1-\tfrac{2}{p}\right)\right]\lyaV(y)
+
\left[\tfrac{c^3}{p}+\tfrac{2c^3}{2p}
\right]\lyaV(x)
= \left(\tfrac{3c^3}{2}-\tfrac{2c^3}{p}\right)\lyaV(y)+\tfrac{2c^3}{p}\lyaV(x).
\end{split}\label{r04}\end{align}
Combining this and, e.g., Cox et al.~\cite[Lemma~2.2]{CoxHutzenthalerJentzen2014} (applied
for every $t\in[0,T)$,
$s\in[t,T]$,  $x\in\R^d$, $\theta\in\Theta$
with
$T\defeq T-t$,
$O\defeq\R^d$,
$V\defeq( [0,T-t]\times\R^d\ni(s,x)\mapsto\lyaV(x)\in[0,\infty))$, $\alpha\defeq ([0,T-t]\ni s\mapsto 2c^3 \in [0,\infty) )$, 
$\tau\defeq s-t$,
$X\defeq (X^{\theta,x}_{t,t+r})_{r\in[0,T-t]}$  
 in the notation of Cox et al.~\cite[Lemma~2.2]{CoxHutzenthalerJentzen2014}) demonstrates that for all
$\theta\in\Theta$, $x\in\R^d$, $t\in[0,T]$, $s\in [t,T]$ it holds
  that 
\begin{align}\label{r05}
\E \bigl[\lyaV(X_{t,s}^{\theta,x})\bigr]\leq e^{2c^3(s-t)}\lyaV(x).
\end{align}
It\^o's formula, \cref{r04}, and the fact that $\lyaV\ge 1$ imply that for all $x\in\R^d$, $t\in[0,T]$ it holds that
\begin{equation}
\begin{split}
&\E[\lyaV(\linbm^x_{\min\{\tau_n^x,t\}})]\\
&=
\lyaV(x)+\E\!\left[\int_0^{\min\{\tau_n^x,t\}}
( \lyaV'(\linbm^x_s))(\mu(x))+\frac{1}{2}\sum_{k=1}^{m}
(\lyaV''(\linbm^x_s))(\sigma_k(x),\sigma_k(x))\,ds\right]\\
&\le 
\lyaV(x)+\E\!\left[\int_0^{\min\{\tau_n^x,t\}}
\left(\tfrac{3c^3}{2}-\tfrac{2c^3}{p}\right)\lyaV(\linbm_s^x)+\tfrac{2c^3}{p}\lyaV(x)\,ds\right]\\
&\le 
\lyaV(x)\left(1+\tfrac{2c^3t}{p}\right)+\left(\tfrac{3c^3}{2}-\tfrac{2c^3}{p}\right)\E\!\left[\int_0^{t}
 \lyaV(\linbm_s^x)\1_{[0,\tau_n^x]}(s) \,ds\right]\\
&\le 
\lyaV(x)\left(1+\tfrac{2c^3t}{p}\right)+\left(\tfrac{3c^3}{2}-\tfrac{2c^3}{p}\right)\int_0^{t}
\E[\lyaV(\linbm_{\min\{\tau_n^x,s\}}^x)]\,ds.
\end{split}
\end{equation}
Gronwall's inequality and
the fact that for all $a\in \R$ it holds that $ 1+ a\leq e^{a}$ therefore assure that for all $x\in\R^d$, $t\in[0,T]$ it holds that 
\begin{align}
\E[\lyaV(\linbm^x_{\min\{\tau_n^x,t\}})]
\leq \exp\!\left( \left[ \tfrac{ 3c^3}{ 2 } - \tfrac{ 2c^3}{ p } \right] t \right) 
\left[ 1 + \tfrac{ 2c^3t }{p } \right] \lyaV(x)
\leq e^{2c^3t}\lyaV(x).
\end{align}
Fatou's lemma hence proves that for all $x\in\R^d$, $t\in[0,T]$ it holds that
\begin{align}
\E\bigl[
\lyaV(x+\mu(x)t+\sigma(x)W_{t})\bigr]
=\E[\lyaV(\linbm^x_{t})]
\leq e^{2c^3t}\lyaV(x)
\end{align}%
(cf., e.g., also Hudde et al.~\cite[Theorem~2.4]{HMH19}).
The tower property for conditional expectations, 
the fact that for all $t\in[0,T]$, $s\in[t,T]$, $\theta\in\Theta$ it holds that
$W^\theta_{s}-W^\theta_{t}$ and $\F_t$ are independent,
 and the fact that for all $t\in[0,T]$, $s\in[t,T]$, $\theta\in\Theta$, $B\in\mathcal{B}(\R^d)$ it holds that $\P((W^\theta_{s}-W^\theta_{t})\in B)=\P( W^\theta_{s-t}\in B)$ hence prove that for all
$\theta\in\Theta$,
$x\in\R^d$, $t\in[0,T]$, $s\in [t,T]$ 
it holds 
  that
\begin{align}\begin{split}
&\E \bigl[\lyaV(Y_{t,s}^{\theta,x})\bigr]\\
&=\E\biggl[\E \Bigl[\lyaV\bigl(Y_{t,\max\{t,\rdownni{s}{\tau}\}}^{\theta,x}+
\mu(Y_{t,\max\{t,\rdownni{s}{\tau}\}}^{\theta,x})(s- \max\{t,\rdownni{s}{\tau}\})\\
&\quad+
\sigma(Y_{t,\max\{t,\rdownni{s}{\tau}\}}^{\theta,x})(W^{ {\theta}}_{s} -W^{ {\theta}}_{ \max\{t,\rdownni{s}{\tau}\}} )\bigr)\Big|\F_{\rdownni{s}{\tau}}\Bigr] \biggr]
\\
&
=\E\!\left[\E \Bigl[\lyaV \bigl(z+
\mu(z)(s- \max\{t,\rdownni{s}{\tau}\})+
\sigma(z)(W^{ {\theta}}_{s- \max\{t,\rdownni{s}{\tau}\}} ) \bigr) \Bigr]\Bigr|_{ z=Y_{t,\max\{t,\rdownni{s}{\tau}\}}^{\theta,x}}\right]\\
&\leq e^{2c^3(s-\max\{t,\rdownni{s}{\tau}\})}
\E\!\left[
\lyaV\bigl(Y_{t,\max\{t,\rdownni{s}{\tau}\}}^{\theta,x}\bigr)\right].
\end{split}\end{align}
Induction and \eqref{v01} hence show that
for all 
$\theta\in\Theta$,
$x\in\R^d$, $t\in[0,T]$, $s\in [t,T]$ it holds
  that 
$
\E \bigl[\lyaV(Y_{t,s}^{\theta,x})\bigr]\leq e^{2c^3(s-t)}\lyaV(x).
$ 
Jensen's inequality and \eqref{r05} therefore prove that 
for all $q\in[0,p]$,
$\theta\in\Theta$,
$x\in\R^d$, $t\in[0,T]$, $s\in [t,T]$ it holds
that
\begin{align}\label{r06}\begin{split}
&\max\!\big\{\E \bigl[(\lyaV(Y_{t,s}^{\theta,x}))^{\frac{q}{p}}\bigr],
\E \bigl[(\lyaV(X_{t,s}^{\theta,x}))^{\frac{q}{p}}\bigr]\big\}\\
&
\leq 
\max\!\left\{
\left(
\E \bigl[\lyaV(Y_{t,s}^{\theta,x})\bigr]\right)^{\frac{q}{p}},
\left(\E \bigl[\lyaV(X_{t,s}^{\theta,x})\bigr]\right)^{\frac{q}{p}}\right\}
\leq e^{2qc^3(s-t)/p}(\lyaV(x))^{\frac{q}{p}}.
\end{split}\end{align}
Moreover, observe that the fact that $\mu$ is continuous,
the fact that $\sigma$ is continuous, the fact that for all 
$\theta\in\Theta$, $\omega \in \Omega$ it holds that
$[0,T]\ni t\mapsto W^\theta_t(\omega)\in \R^d$ is continuous,
and Fubini's theorem imply that for all
$\theta\in\Theta$ and all
measurable
$\eta\colon  [0,T]\times\R^d\to[0,\infty)$ it holds
that  
\begin{equation}\label{v09d}
\Delta\times\R^d \ni (t,s,x)\mapsto
\E\bigl[\eta\bigl(s,Y_{t,s}^{\theta,x}\bigr)\bigr]\in[0,\infty]
\end{equation}
is measurable.
Furthermore, note that \eqref{v05b}, \eqref{v03-2}, \eqref{r04}, and, e.g., Beck et al.~\cite[Lemma~3.7]{beck2019existence} (applied with $\mathcal{O}\defeq\R^d$,  
$V\defeq
([0,T]\times\R^d\ni (t,x)\mapsto e^{-2c^3 t/p}\lyaV(x) \in(0,\infty))
$
 in the notation of Beck et al.~\cite[Lemma~3.7]{beck2019existence})
imply that
$\Delta\times\R^d\times\R^d \ni (t,s,x,y)\mapsto
\bigl(s,X_{t,s}^{\theta,x},X_{t,s}^{\theta,y}\bigr)\in\mathcal{L}^0(\Omega;\R\times\R^d\times\R^d )$ is continuous.
This and the dominated convergence theorem prove that for all $\theta\in\Theta$ and all bounded and
continuous $\eta\colon  [0,T]\times\R^d\times\R^d\to[0,\infty)$ it holds that
$ 
\Delta\times\R^d\times\R^d \ni (t,s,x,y)\mapsto
\E\bigl[\eta\bigl(s,X_{t,s}^{\theta,x},X_{t,s}^{\theta,y}\bigr)\bigr]\in[0,\infty]$
is continuous. Hence, we obtain that 
for all $\theta\in\Theta$ and all bounded and
continuous $\eta\colon  [0,T]\times\R^d\times\R^d\to[0,\infty)$ it holds that
$ 
\Delta\times\R^d\times\R^d \ni (t,s,x,y)\mapsto
\E\bigl[\eta\bigl(s,X_{t,s}^{\theta,x},X_{t,s}^{\theta,y}\bigr)\bigr]\in[0,\infty]$
is measurable.
This implies
 that for all $\theta\in\Theta$
and all
measurable
$\eta\colon  [0,T]\times\R^d\times\R^d\to[0,\infty)$ it holds
that  
\begin{align}\label{v09e}
\Delta\times\R^d\times\R^d \ni (t,s,x,y)\mapsto
\E\bigl[\eta\bigl(s,X_{t,s}^{\theta,x},X_{t,s}^{\theta,y}\bigr)\bigr]\in[0,\infty]
\end{align}
is measurable.
Combining
 \eqref{v09d}, \eqref{r06},
\eqref{v04b}, \eqref{v07}, and
\cref{b01} (applied
for every 
$ k\in \{0,1\}$
with $L\defeq c$,
$\mathcal{O}\defeq \R^d$, 
$ (X_{t,s}^x)_{(t,s,x)\in \Delta \times \R^d}\defeq  (\mathfrak{X}^{k,x}_{t,s})_{(t,s,x)\in\Delta \times \R^d}$, 
$
V\defeq ([0,T]\times\R^d\ni(s,x)\mapsto e^{2c^3  \beta(T-s)/p}(\lyaV (x))^{\beta /p}\in(0,\infty))
$
 in the notation of \cref{b01}) hence
 establishes that 
\begin{enumerate}[a)]
\item there exist unique
measurable
$\uprop_k \colon [0,T]\times\R^d\to\R$, $k\in \{0,1\}$,
which satisfy for all $k\in \{0,1\}$, $t\in[0,T]$, $x\in\R^d$ 
that
$\sup_{s\in[0,T]}\sup_{x\in\R^d} \bigl[|\uprop_k(s,x)|{(\lyaV(x))^{-\beta /p}}\bigr]+\E\bigl[\bigl|g\bigl(\Xprop^{k,x}_{t,T}\bigr)\bigr|+\int_{t}^{T}\bigl|f\bigl(s,\Xprop_{t,s}^{k,x},\uprop_k\bigl(s,\Xprop_{t,s}^{k,x}\bigr)\bigr)\bigr|\,ds \bigr]<\infty$
and 
\begin{align}\begin{split}
&\uprop_k(t,x)=\E\!\left[g\bigl(\Xprop^{k,x}_{t,T}\bigr)+\int_{t}^{T}f\bigl(s,\Xprop_{t,s}^{k,x},\uprop_k(s,\Xprop_{t,s}^{k,x})\bigr)\,ds \right]
\end{split}\label{v15}\end{align}
and
\item 
it holds for all $k\in \{0,1\}$ that 
\begin{align}\label{v14}\small\begin{split}
\sup_{t\in[0,T]}\sup_{x\in\R^d} \left[\frac{|\uprop_k(t,x)|}{e^{2c^3 \beta  (T-t)/p}(\lyaV(x))^{\beta /p}}\right]\leq 
\sup_{t\in[0,T]}\sup_{x\in\R^d}\left[\left[\frac{|g(x)|}{(\lyaV(x))^{\beta/p}}
+\frac{|Tf(t,x,0)|}{(\lyaV(x))^{\beta /p}}\right]e^{cT}\right]
\leq
2be^{cT}.
\end{split}\end{align}
\end{enumerate}
This proves \cref{k21}. Moreover, note that \cref{ppt0} establishes \cref{k22}.
Next observe that \eqref{r11} and \eqref{r06} demonstrate that for all 
$\theta\in\Theta$, $t\in[0,T]$, $r\in[t,T]$, $x\in\R^d$ it holds that
\begin{align}\label{r12}\begin{split}
&\max\!\left\{\E\!\left[\bigl\|\mu(Y_{t,\max\{t,\rdownni{r}{\tau}\}}^{\theta,x})\bigr\|^2\right],
\E\!\left[\vertiii{\sigma(Y_{t,\max\{t, \rdownni{r}{\tau}\}}^{\theta,x})}^2\right]\right\}\\&\leq c^2
\E\!\left[\bigl(\lyaV\bigl(Y_{t,\max\{t, \rdownni{r}{\tau}\}}^{\theta,x}\bigr)\bigr)^{\nicefrac{2}{p}}\right]\leq c^2 e^{4c^3(r-t)/p}(\lyaV(x))^{\nicefrac{2}{p}}.
\end{split}\end{align}
Furthermore, note that \eqref{v01} demonstrates that 
for all $t\in[0,T]$, 
$r\in[t,T]$,
 $x\in\R^d$, $\theta\in\Theta$
 it holds that
$ \sigma (\{Y_{t,\max\{t, \rdownni{r}{\tau}\}}^{\theta,x}\})\subseteq {\F_r}$.
Combining this and \eqref{r12} with the fact that for all $t\in[0,T]$,  $x\in\R^d$ it holds that 
$\E\!\left[\big.\!\left\| \sigma(x)W_t\right\|  ^{2} \right]
=
\vertiii{\sigma(x)}^2{t}
$ shows that for all
$\theta\in\Theta$,
 $t\in[0,T]$, $r\in[t,T]$, $x\in\R^d$ it holds that
\begin{align}\begin{split}
&\E\!\left[\bigl\|\sigma(Y_{t,\max\{t, \rdownni{r}{\tau}\}}^{\theta,x})(W^\theta_{r} -W^\theta_{\max\{t, \rdownni{r}{\tau}\}} )\bigr\|^2\right]
\\
&
=
\E\!\left[\E\!\left[\bigl\|\sigma(y)(W^\theta_{r} -W^\theta_{\max\{t, \rdownni{r}{\tau}\}} )\bigr\|^2\right]\Bigr|_{y= Y_{t,\max\{t, \rdownni{r}{\tau}\}}^{\theta,x}}\right]
\\
&
=
\E\!\left[ 
\vert \kern-0.25ex\vert \kern-0.25ex\vert \sigma(Y_{t,\max\{t, \rdownni{r}{\tau}\}}^{\theta,x})\vert \kern-0.25ex \vert \kern-0.25ex \vert
^2(r- \max\{t, \rdownni{r}{\tau}\} )\right]
\\
&
\leq 
\E\!\left[ 
\vert \kern-0.25ex\vert \kern-0.25ex\vert \sigma(Y_{t,\max\{t, \rdownni{r}{\tau}\}}^{\theta,x})\vert \kern-0.25ex \vert \kern-0.25ex \vert
^2{\size{\tau}}\right]
\leq  c^2 e^{4c^3(r-t)/p}(\lyaV(x))^{\nicefrac{2}{p}}{\size{\tau}} .
\end{split}\end{align}
This, \eqref{v01}, the triangle inequality, and \eqref{r12}
 imply that for all
$\theta\in\Theta$, $t\in[0,T]$, $r\in[t,T]$, $x\in\R^d$ it holds that 
\begin{align}\begin{split}
&\left(\E\!\left[ \bigl\|Y_{t,\max\{t, \rdownni{r}{\tau}\}}^{\theta,x}-
Y_{t,r}^{\theta,x}\bigr\|^2 \right]\right)^{\nicefrac{1}{2}}\\
&\leq 
\left(\E\!\left[\bigl\|\mu(Y_{t,\max\{t, \rdownni{r}{\tau}\}}^{\theta,x})\bigr\|^2\right]\right)^{\nicefrac{1}{2}}
(r-\max\{t, \rdownni{r}{\tau}\})\\&\quad
+\left(\E\!\left[\bigl\|\sigma(Y_{t,\max\{t, \rdownni{r}{\tau}\}}^{\theta,x})(W^\theta_{r} -W^\theta_{\max\{t, \rdownni{r}{\tau}\}} )\bigr\|^2\right]\right)^{\nicefrac{1}{2}}\\
&\leq ce^{2c^3 (r-t)/p}(\lyaV(x))^{\nicefrac{1}{p}}
\size{\tau}^{\nicefrac{1}{2}}
|r-t|^{\nicefrac{1}{2}}
+
 ce^{2c^3 (r-t)/p}(\lyaV(x))^{\nicefrac{1}{p}}{\size{\tau}}^{\nicefrac{1}{2}}
\\
&=
 c\bigl[|r-t|^{\nicefrac{1}{2}}+1\bigr]
e^{2c^3 (r-t)/p}(\lyaV(x))^{\nicefrac{1}{p}}
{\size{\tau}}^{\nicefrac{1}{2}}
.\label{r35}
\end{split}\end{align}
Next note that \eqref{v03-2} and the fact that $c\geq 1$ assure that for all $ {z},y\in\R^d$ with $ {z}\neq y$ it holds that
\begin{align}\footnotesize\begin{aligned}\label{r34}
\frac{\langle  {z}-y,\mu( {z})-\mu(y)\rangle+\tfrac{1}{2}\vertiii{\sigma( {z})-\sigma(y)}^2}{\| {z}-y\|^2}+\frac{(\tfrac{2}{2}-1)\|(\sigma ( {z})-\sigma(y))^{\mathsf{T}}( {z}-y)\|^2}{\| {z}-y\|^4}\end{aligned}\leq   2c^2.
\end{align}
This, \cite[Theorem 1.2]{HJ20} (applied
for every $\theta\in\Theta$,
$t\in[0,T)$, $s\in(t,T]$,
$x\in\R^d$ 
with
$
H\defeq \R^d$, 
$U\defeq \R^m$, 
$D\defeq\R^d$,
$T\defeq (s-t)$, $({\F}_r)_{r\in [0,T]}\defeq (\F_{r+t})_{r\in[0,s-t]}$,
$
(W_r)_{r\in [0,T]}\defeq 
(W_{t+r}^{\theta}-W_t^{\theta})_{r\in [0,s-t]}
$,
$(X_r)_{r\in[0,T]}\defeq (X^{\theta,x}_{t,t+r})_{r\in [0,s-t]}$,
$(Y_r)_{r\in[0,T]}\defeq (Y^{\theta,x}_{t,t+r})_{r\in [0,s-t]}$,
$(a_r)_{r\in[0,T]}\defeq (\mu(Y^{\theta,x}_{t,\max\{t,\rdownni{t+r}{\tau}\}}))_{r\in [0,s-t]}$,
$(b_r)_{r\in[0,T]}\defeq (\sigma(Y^{\theta,x}_{t,\max\{t,\rdownni{t+r}{\tau}\}}))_{r\in [0,s-t]}$, $\epsilon\defeq 1$,
$p\defeq 2$, $\tau \defeq (\Omega\ni \omega\mapsto s-t\in[0,s-t])$,
$\alpha\defeq 1$, $\beta \defeq 1$, $r\defeq 2$, 
$q\defeq \infty$
in the notation of \cite[Theorem 1.2]{HJ20}), \eqref{v03-2}, \eqref{r35}, the fact that for all $t\in[0,\infty)$ it holds that $\sqrt{t}(\sqrt{t}+1)\leq e^t$, the fact that $1\leq c$, and the fact that $p\geq 2$ imply that for all
$\theta\in\Theta$, $t\in[0,T]$, $s\in[t,T]$, $x\in\R^d$ it holds that
\begin{equation}\small\begin{split}
&\left(\E\!
\left[\bigl\|X_{t,s}^{\theta,x}-Y_{t,s}^{\theta,x}\bigr\|^2\right]\right)^{\nicefrac{1}{2}}\\
&
\leq 
\sup_{\substack{ {z},y\in\R^d,\\ {z}\neq y}}
\exp\! \left(\int_{t}^{s}\left[\frac{\langle  {z}-y,\mu( {z})-\mu(y)\rangle+
\frac{(2-1)(1+1)}{2}
\vertiii{\sigma( {z})-\sigma(y)}^2}{\| {z}-y\|^2}+\tfrac{1-\frac{1}{2}}{1}+\tfrac{\frac{1}{2}-\frac{1}{2}}{1}\right]^+
dr
\right)\\
&\quad\cdot
\Biggl[\left(\int_{t}^{s}\E\!\left[ \bigl\|\mu \bigl(Y_{t,\max\{t,\rdownni{r}{\tau}\}}^{\theta,x}\bigr)-
\mu\bigl (Y_{t,r}^{\theta,x}\bigr)\bigr\|^2 \right]dr\right)^{\nicefrac{1}{2}}\\
&\quad
+\sqrt{\tfrac{(2-1)(1+1)}{1}}\left(\int_{t}^{s}\E\!\left[ \vertiii{\sigma \bigl(Y_{t,\max\{t,\rdownni{r}{\tau}\}}^{\theta,x}\bigr)-
\sigma \bigl(Y_{t,r}^{\theta,x}\bigr)}^2 \right]dr\right)^{\nicefrac{1}{2}}
\Biggr]
\\
&\leq e^{3c^2(s-t)}3c\left(|s-t|\sup_{r\in[t,s]}\E\!\left[ \bigl\|Y_{t,\max\{t,\rdownni{r}{\tau}\}}^{\theta,x}-
Y_{t,r}^{\theta,x}\bigr\|^2 \right]\right)^{\nicefrac{1}{2}}\\
&\leq e^{3c^2(s-t)}3c 
|s-t|^{\nicefrac{1}{2}}
 c\bigl[|s-t|^{\nicefrac{1}{2}}+1\bigr]
e^{2c^3 (s-t)/p}(\lyaV(x))^{\nicefrac{1}{p}}
{\size{\tau}}^{\nicefrac{1}{2}}
\\&\leq 3c^2 e^{4c^2T} 
e^{2c^3 ( {s}-t)/p}(\lyaV(x))^{\nicefrac{1}{p}}
{\size{\tau}}^{\nicefrac{1}{2}}.\label{r16}
\end{split}
\end{equation}
 Next observe that
 \cref{k20}, \eqref{r34}, and, e.g.,
Cox et al.~\cite[Corollary~2.26]{CoxHutzenthalerJentzen2014} (applied
for every $t\in[0,T)$, $s\in(t,T]$
with
 $T\defeq s-t$,
$O\defeq \R^d$,
 $(\mathcal{F}_{r})_{r\in[0,T]} \defeq (\F_{t,t+r})_{r\in[0,s-t]} $,
 $(W_{r})_{r\in[0,T]} \defeq (W^0_{t,t+r}-W^0_t)_{r\in[0,s-t]} $,
$\alpha_0\defeq 0$, 
$\alpha_1\defeq 0$,
$\beta_0\defeq 0$,
$\beta_1\defeq 0$,
$c\defeq 2c^2$,
$r\defeq 2$, $p\defeq 2$, $q_0\defeq \infty$, $q_1\defeq \infty$,
$U_0\defeq (\R^d\ni x\mapsto 0\in \R)$,
$U_1\defeq (\R^d\ni x\mapsto 0\in [0,\infty))$,
$\overline{U}\defeq (\R^d\ni x\mapsto 0\in \R)$,
$(X^x_{r})_{r\in[0,T],x\in\R^d}\defeq (X_{t,t+r}^{0,x})_{r\in[0,s-t],x\in\R^d} $
in the notation of Cox et al.~\cite[Corollary~2.26]{CoxHutzenthalerJentzen2014}) demonstrate that  
for all  $t\in[0,T)$, $s\in(t,T]$, $x,y\in\R^d$ it holds that
$
\left(\E\!\left[
\|X_{t,s}^{0,x}-X_{t,s}^{0,y}\|^2\right]\right)^{\nicefrac{1}{2}}\leq e^{2c^2(s-t)}\|x-y\|.
$
This
and \eqref{r16}
imply that
for all $t\in[0,T]$, $s\in[t,T]$,
$r\in[s,T]$, $x,y\in\R^d$  it holds that
\begin{align}\begin{split}
&
\left(\E\!\left[
\E\!\left[\bigl\|X_{s,r}^{0,\mathfrak{x}}-X_{s,r}^{0,\mathfrak{y}}\bigr\|^{2}\right]\Bigr|_{\substack{(\mathfrak{x},\mathfrak{y})=(X^{0,x}_{t,s},Y_{t,s}^{0,x})}}\right]\right)^{\!\!\nicefrac{1}{2}}
\leq \left(\E\!\left[
\left[e^{2c^2(r-s)}\left\|X^{0,x}_{t,s}-Y_{t,s}^{0,x}\right\|\right]^2\right]\right)^{\!\nicefrac{1}{2}}\\&\leq 
e^{2c^2(r-s)}
3c^2 e^{4c^2T} 
e^{2c^3 (s-t)/p}(\lyaV(x))^{\nicefrac{1}{p}}
{\size{\tau}}^{\nicefrac{1}{2}}
\leq 
3c^2 e^{4c^2T} 
{\size{\tau}}^{\nicefrac{1}{2}}\bigl[ e^{4c^3 (T-t)/p}(\lyaV(x))^{\nicefrac{2}{p}}\bigr]^{\nicefrac{1}{2}}
.
\end{split}\label{v09c}\end{align}
Furthermore, note that \cref{k20} and Tonelli's theorem
 ensure that for all 
$t\in[0,T]$, $s\in[t,T]$, $r\in[s,T]$, $x,y\in\R^d$ and all measurable $h\colon \R^{d}\times\R^d\to [0,\infty)$ it holds
 that
$
\R^d \times \R^d \ni (y_1,y_2) \mapsto \E\bigl[h\bigl(X^{0,y_1}_{s,r},X^{0,y_2}_{s,r}\bigr)\bigr] \in [0,\infty]
$
is measurable.
Moreover, observe that \cref{k20} assures that for all 
$t\in[0,T]$, $s\in[t,T]$, $r\in[s,T]$, $x,y\in\R^d$ it holds that
$X^{0,x}_{t,s} $ and 
$X^{0,y}_{s,r} $ are independent.
This and, e.g., the disintegration-type result in \cite[Lemma~2.2]{HJKNW2018}
show that for all
$t\in[0,T]$, $s\in[t,T]$, $r\in[s,T]$, $x,y\in\R^d$ and all measurable $h\colon \R^{d}\times\R^d\to [0,\infty)$ it holds
 that
$
\E\bigl[ \E\bigr[ h\bigl(X^{0,\tilde{x}}_{s,r},X^{0,\tilde{y}}_{s,r}\bigr) \bigr] |_{\tilde{x}=X_{t,s}^{0,x},\tilde{y}=X_{t,s}^{0,y}} \bigr]
   =\E\!\left[h\bigl(X^{0,x}_{t,r},X^{0,y}_{t,r}\bigr)\right] .
$
Combining 
\cref{k20}, 
\eqref{v01},  
\eqref{r06}, 
\eqref{v09e}, 
\eqref{v07}, 
\eqref{v09c}, 
\eqref{v15}, 
\eqref{v14}, 
\cref{d08} (applied with
$L\defeq c$, $\rho\defeq 2c^3$, 
$\eta\defeq 1$,
$\delta\defeq  3c^2 e^{4c^2T} 
{\size{\tau}}^{\nicefrac{1}{2}}$,
$p\defeq p/\beta$,
$ q\defeq 2$,
$(X_{t,s}^{x,1})_{t\in[0,T],s\in[t,T],x\in\R^d}\defeq (X_{t,s}^{0,x})_{t\in[0,T],s\in[t,T],x\in\R^d}$,
$(X_{t,s}^{x,2})_{t\in[0,T],s\in [t,T],x\in\R^d}\defeq (Y_{t,s}^{0,x})_{t\in[0,T],s\in [t,T],x\in\R^d}$,
$V\defeq b^{p/\beta}\lyaV$,
$\psi \defeq \bigl([0,T]\times\R^d\ni(t,x)\mapsto 
e^{
4c^3 (T-t)/p}(\lyaV(x))^{\nicefrac{2}{p}}\in(0,\infty)\bigr)$, 
$u_1\defeq \uprop_0$, $u_2\defeq \uprop_1$  in the notation of \cref{d08}),
the fact that
$1+cT\leq e^{cT}$, 
the fact that $c\geq1$,
the fact that $\lyaV\geq1$,
the fact that $p\geq 2$,
and
the fact that $p\geq 2\beta$
hence implies that 
for all $t\in[0,T]$, $x\in\R^d$ it holds that
\begin{align}\begin{split}
&|\uprop_0(t,x)-  \uprop_1(t,x)|\\&\leq
4(1+cT)T^{-\nicefrac{1}{2}}
e^{cT+(2c^3\beta/p+c) T}
(b^{p/\beta}\lyaV(x))^{\nicefrac{\beta }{p}}
\bigl[ e^{
4c^3 (T-t)/p}(\lyaV(x))^{\nicefrac{2}{p}}\bigr]^{\nicefrac{1}{2}}
3c^2 e^{4c^2T} 
{\size{\tau}}^{\nicefrac{1}{2}}\\
&\leq 4 e^{cT} T^{-\nicefrac{1}{2}}e^{cT+c^3T+cT}b (\lyaV(x))^{\frac{\beta}{p}}e^{c^3T}(\lyaV(x))^{\frac{1}{p}}3c^2 e^{4c^2T}{\size{\tau}}^{\nicefrac{1}{2}}
\\
&\leq 12bc^2 T^{-\nicefrac{1}{2}}e^{{9}c^3T}
(\lyaV(x))^{\frac{\beta+1}{p}}\size{\tau}^{\nicefrac{1}{2}}.
\end{split}\label{v16}\end{align}
Next observe that \cref{x01} (applied for every $t\in[0,T)$, $n\in\N$ with $L\defeq c$, $\rho\defeq 4 \beta c^3/p$, 
$Y^\theta\defeq Y^{\theta}$,
$\bigV^\theta\defeq \bigV^{\theta}$,
$\smallV\defeq \uprop_1$,
$\varphi\defeq \lyaV^{2\beta  /p}$, 
$N\defeq n$,
$t_0\defeq t$  in the notation of \cref{x01}), 
\eqref{v07}, 
\eqref{v15},
\eqref{v14},
\eqref{r06}, 
\eqref{v04b}, and the fact that $p\geq 2\beta\geq 2 $ assure that for all
$t\in[0,T]$,  $n\in\N$ it holds that
\begin{align}\begin{split}
&\sup_{x\in\R^d}\left[\frac{
\E\!\left[|{\bigV}_{n}^{0}(t,x)-\uprop_1(t,x)|^2\right]}{(\lyaV(x))^{{2\beta}/{p}}}\right]^{\!\nicefrac{1}{2}}\\
&\leq e^{M/2}M^{-n/2}(1+2Tc)^{n-1}e^{2c^3 \beta T/p}
\\& \quad\cdot
\left[2\sup_{s\in[0,T]}\sup_{x\in\R^d}\left[\frac{\max\{|T(\funcF(0))(s,x)|,|g(x)|\}}{(\lyaV(x))^{\beta/p}}\right]+2Tc\sup_{s\in[0,T]}\sup_{x\in\R^d}\left[
\frac{|\uprop_1(s,x)|}{(\lyaV(x))^{\beta/p}}\right]\right]\\
&\leq e^{M/2}M^{-n/2}(1+2Tc)^{n-1}e^{2c^3 \beta T/p}\left[2b+4Tcb e^{cT+2c^3 \beta T/p}\right]\\
&\leq  e^{M/2}M^{-n/2}(1+2Tc)^{n-1}e^{2c^3 \beta T/p}2be^{cT+2c^3 \beta T/p}(1+2Tc )
\leq 
2be^{M/2}M^{-n/2}e^{2ncT} e^{3c^3T}.
\end{split}\end{align}
The triangle inequality, \eqref{v16},
the fact that $c\geq 1$, 
the fact that $\lyaV \geq 1$,
and the fact that $p\geq 2$
hence
 show that for all $t\in[0,T]$, 
$x\in\R^d$,
$n\in\N$ it holds that
\begin{align}\begin{split}
&\left(\E\!\left[|{\bigV}_{n}^{0}(t,x)-\uprop_0(t,x)|^2\right]\right)^{\nicefrac{1}{2}}\leq 
\left(\E\!\left[|{\bigV}_{n}^{0}(t,x)-\uprop_1(t,x)|^2\right]\right)^{\nicefrac{1}{2}}+
| \uprop_1(t,x)-\uprop_0(t,x)|\\
&
\leq 2be^{M/2}M^{-n/2}e^{2ncT} e^{3c^3T}(\lyaV(x))^{\frac{\beta}{p}}+12bc^2 T^{-\nicefrac{1}{2}}e^{{9}c^3T}
(\lyaV(x))^{\frac{\beta+1}{p}}\size{\tau}^{\nicefrac{1}{2}}\\
&\leq 
\left(e^{M/2}e^{2ncT}M^{-n/2}+{\size{\tau}^{\nicefrac{1}{2}}}{T^{-\nicefrac{1}{2}}}\right)12bc^2 e^{{9}c^3T}
(\lyaV(x))^{\frac{\beta+1}{p}}.\end{split}\label{r21}
\end{align}
This and \eqref{v14} establish \cref{k22b}.
The proof of \cref{m01} is thus complete.
\end{proof}

\subsection{Complexity analysis for MLP approximations in fixed space dimensions}
\label{sec:fix_dim}
\newcommand{\cone}{\mathfrak{m}}
\newcommand{\ctwo}{\mathfrak{g}}
\newcommand{\cthree}{\mathfrak{f}}
\begin{theorem}
\label{m01_x}
Let $d,m\in\N$,
$T\in  (0,\infty)$, 
$\cthree, \ctwo, \cone \in [0,\infty)$,
  $\beta,b,c\in [1,\infty)$,  
$p\in[2\beta,\infty)$, 
$\lyaV\in C^2(\R^d, [1,\infty))$,
$\funcG\in C( \R^d,\R)$,
$\mu\in C( \R^{d} ,\R^{d})$, $\sigma=(\sigma_1,\sigma_2,\ldots,\sigma_m)\in C( \R^{d},\R^{d\times m})$,
let $\lVert\cdot \rVert\colon \R^{d}\to[0,\infty)$ be the standard norm on $\R^d$, 
 let
$\smallF\colon [0,T]\times \R^{d}\times \R \to\R$ be 
measurable,
let $\funcF\colon\R^{[0,T]\times\R^d}\to\R^{[0,T]\times\R^d}$ satisfy for all
$t\in [0,T]$,
$x\in \R^d$,
 $v\in \R^{[0,T]\times\R^d}$ that
$
(\funcF(v))(t,x)= f(t,x,v(t,x)),
$
assume  for all  $x,y\in\R^d$, $z\in \R^d\backslash \{0\}$, $t\in[0,T]$,  $v,w\in\R $  that 
\begin{gather}
\label{v05b_x}
\max\bigl\{\tfrac{|( \lyaV'(x))(z)|}{(\lyaV(x))^{(p-1)/p}\|z\|}, \tfrac{|( \lyaV'' (x))(z,z)|}{(\lyaV(x))^{(p-2)/p}\|z\|^2},
\tfrac{c\|x\|+\|\mu(0)\|}{(\lyaV(x))^{1/p}}, 
\tfrac{c\|x\|+[ \sum_{ i=1 }^m \| \sigma_i(0) \|^2 ]^{ 1/2 }} {(\lyaV(x))^{1/p}} \bigr\}
\leq c,
\\
\label{v04b_x}
\max\bigl\{|Tf(t,x,0)|,|g(x)|\bigr\}\leq b(\lyaV(x))^{\beta/p},
\\
\label{v07_x}
\max\bigl\{|g(x)-g(y)|,T|f(t,x,v)-f(t,y,w)|\bigr\}\leq cT|v-w|+ \tfrac{(\lyaV(x)+\lyaV(y))^{\beta/p}
\|x-y\|}{T^{1/2}b^{-1}}
,
\\
\max\bigl\{\|\mu(x)-\mu(y)\|^2,
 \smallsum_{ i=1 }^m \| \sigma_i(x)-\sigma_i(y) \|^2 
\bigr\}
\leq c^2\|x-y\|^2,\label{v03-2_x}
\end{gather}
let $(\Omega,\mathcal{F},\P, (\F_t)_{t\in[0,T]})$ be a filtered probability space which satisfies the usual conditions,
let 
$  \Theta = \bigcup_{ n \in \N }\! \Z^n$,
let $\unif^\theta\colon \Omega\to[0,1]$, $\theta\in \Theta$, be i.i.d.\ random variables,
assume for all $t\in(0,1)$ that $\P(\unif^0\le t)=t$, 
let $\uniform^\theta\colon [0,T]\times \Omega\to [0, T]$, $\theta\in\Theta$, satisfy 
for all $\theta\in \Theta$, $t\in [0,T]$ that 
$\uniform^\theta _t = t+ (T-t)\unif^\theta$,
 let $W^\theta\colon [0,T]\times\Omega \to \R^{m}$, $\theta\in\Theta$, be i.i.d.\ standard $(\F_{t})_{t\in[0,T]}$-Brownian motions, 
assume that
$(\unif^\theta)_{\theta\in\Theta}$ and
$(W^\theta)_{\theta\in\Theta}$ are independent,
for every $N\in\N$,
$\theta\in\Theta$, $x\in\R^d$,
$t\in[0,T]$
let 
$
Y^{N,\theta,x}_{t}=
(Y^{N,\theta,x}_{t,s})_{s\in[t,T]}\colon[t,T]\times\Omega\to\R^d$ 
 satisfy for all $n\in\{0,1,\ldots,N\}$,
 $s\in[\frac{nT}{N},\frac{(n+1)T}{N}]\cap[t,T]$ that $Y_{t,t}^{N,\theta,x}=x$ and
\begin{equation}\label{v01_x}
\begin{split}
&Y_{t,s}^{N,\theta,x} -
Y_{t,\max\left\{t,nT/N\right\}}^{N,\theta,x}\\
&=
\mu\big(Y_{t,\max\left\{t,nT/N\right\}}^{N,\theta,x}\big)\big(s-\max\!\big\{t,\tfrac{nT}{N}\big\}\big)+
\sigma\big(Y_{t,\max\left\{t,nT/N\right\}}^{N,\theta,x}\big)\big(W^{\theta}_{s} -W^{\theta}_{\max\{t,nT/N\}} \big),
\end{split}
\end{equation}
let
$ 
  {\bigV}_{ n,M}^{\theta} \colon [0, T] \times \R^d \times \Omega \to \R
$, $n,M\in\Z$, $\theta\in\Theta$,
satisfy 
for all 
$\theta\in\Theta $,
$n\in \N_0$,
$M\in \N$,
$ t \in [0,T]$, $x\in\R^d $
that 
\begin{equation}\label{v02_x}
\begin{split}
  &{\bigV}_{n,M}^{\theta}(t,x)
=
  \frac{\1_{\N}(n)}{M^n}
 \sum_{i=1}^{M^n} 
      \funcG \big(Y^{M^M,(\theta,0,-i),x}_{t,T}\big)
 \\
&  +
  \sum_{\ell=0}^{n-1} \frac{(T-t)}{M^{n-\ell}}
    \left[\sum_{i=1}^{M^{n-\ell}}
      \bigl(\funcF\bigl({\bigV}_{\ell,M}^{(\theta,\ell,i)}\bigr)-\1_{\N}(\ell)\funcF\bigl( {\bigV}_{\ell-1,M}^{(\theta,-\ell,i)}\bigr)\bigr)
      \bigl(\uniform_t^{(\theta,\ell,i)},Y_{t,\uniform_t^{(\theta,\ell,i)}}^{M^M,(\theta,\ell,i),x}\bigr)
    \right],
\end{split}
\end{equation}
and
let 
$\FE_{n,M} \in \R$, $n,M\in \Z$, satisfy for all $n\in \Z$, $M\in \N$ that
\begin{align}
\FE_{n,M}\le M^n(M^M\cone+\ctwo) \1_{ \N }( n ) +\sum_{\ell=0}^{n-1}\left[M^{n-\ell}
(M^M\cone +\cthree+\FE_{\ell,M}+\FE_{\ell-1,M}
)
\right].\label{c01}
\end{align} 
Then
\begin{enumerate}[(i)]
\item \label{k20_x}
for every $t\in[0,T]$,  $x\in\R^d$, $\theta\in\Theta$
there exists a unique $(\F_{s})_{s\in[t,T]}$-adapted stochastic process
$
X^{ \theta, x }_{ t } = ( X^{ \theta, x }_{ t, s } )_{ s\in[t,T] } \colon
[t,T]\times\Omega\to\R^d$ 
with continuous sample paths
which satisfies that for all $s\in[t,T]$ it holds  $\P$-a.s. that
\begin{align}
X_{t,s}^{\theta,x}= x+\int_{t}^s \mu(X_{t,r}^{\theta,x})\,dr +\int_t^s \sigma(X_{t,r}^{\theta,x})\,dW_r^\theta,
\end{align}
\item \label{k21_x}there exists a unique
measurable
$u\colon [0,T]\times\R^d\to\R$ which satisfies
for all $t\in[0,T]$, $x\in \R^d$ that
$
\bigl(\sup_{s\in[0,T],y\in\R^d} [{|\smallU(s,y)|}{(\lyaV(y))^{-\beta /p}}]\bigr)+\int_{t}^{T}\E\bigl[|f(s,X_{t,s}^{0,x},u(s,X_{t,s}^{0,x}))| \bigr]\,ds
+
\E\bigl[|g(X^{0,x}_{t,T})|\bigr] \allowbreak < \infty$
and
\begin{align}
u(t,x)=\E\!\left[g(X^{0,x}_{t,T})\right]+\int_{t}^{T}\E\!\left[f(s,X_{t,s}^{0,x},u(s,X_{t,s}^{0,x}))\right]ds ,
\end{align}%
\item \label{k22_x}it holds for all $t\in[0,T]$, $x\in\R^d$, 
$n\in\N_0$, $M\in \N$, $\theta \in \Theta$
 that ${\bigV}_{n,M}^{\theta}(t,x)$ is 
 measurable,
\item \label{k22b_x} 
 it holds for all $t\in[0,T]$, $x\in\R^d$, 
$n\in\N_0$, $M\in \N$ that
\begin{align}\small\begin{split}
&\big(\E\big[|{\bigV}_{n,M}^{0}(t,x)-\smallU(t,x)|^2\big]\big)^{\!\nicefrac{1}{2}}
\leq 
\left[
\frac{ \exp( 2 n c T + \frac{ M }{ 2 } ) }{ M^{ \nicefrac{n}{2} } }
+\frac
{1}{M^{M/2}}\right]
12bc^2
|\lyaV(x)|^{\frac{\beta+1}{p}}
\exp(9c^3T)
,\end{split}
\end{align}
\item\label{k23_y} it holds for all $n\in \N$ that
$
\sum_{k=1}^{n+1}\FE_{k,k}
\le 
12(3 \cone +\ctwo+2\cthree) 36^n n^{2n}
$,
and
\item\label{k23_x}there exist $\mathsf{n}
\colon  (0,1]\times \R^d \to \N$ such that
 for all 
 $\epsilon,\gamma\in (0,1]$,
$t\in[0,T]$, $x\in\R^d$
it holds that $
\sup_{n\in[\mathsf{n}(\epsilon,x),\infty)\cap\N}
(\E[|{\bigV}_{n,n}^{0}(t,x)-\smallU(t,x)|^2])^{1/2}<\epsilon
$ and
\begin{align}\begin{split}
\left[\sum_{n=1}^{\mathsf{n}(\epsilon,x)}
\FE_{n,n}\right]\epsilon^{\gamma+4}
&\leq (3 \cone +\ctwo+2\cthree)
\big[\sup\nolimits_{n\in\N}\left[n^{-\gamma n/2}\left(5^n\exp(2ncT)\right)^{\gamma+4}\right]
\big]
\\
&\quad \cdot
\bigl[45bc^2\! \exp(9c^3T)
(\lyaV(x))^{(\beta+1)/p}\bigr]^{\gamma+4}<\infty.
\end{split}\end{align}
\end{enumerate}
\end{theorem}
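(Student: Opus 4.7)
Parts \cref{k20_x}--\cref{k22b_x} follow by specializing \cref{m01} to the uniform partition. For each $M\in\N$, set $K=M^M$ and $\tau_k = kT/M^M$ for $k\in\{0,1,\ldots,M^M\}$. Then the Euler scheme in \cref{v01} reduces to the scheme $Y^{M^M,\theta,x}$ in \cref{v01_x}, the MLP scheme in \cref{v02} reduces to that in \cref{v02_x}, and $\max_{i}(\tau_i-\tau_{i-1})^{1/2}/T^{1/2}=M^{-M/2}$. Substituting into the conclusion of \cref{m01} directly yields \cref{k20_x}, \cref{k21_x}, \cref{k22_x}, and \cref{k22b_x}.

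For \cref{k23_y}, fix $M\in\N$, let $B_M=M^M\cone+\ctwo+\cthree$, and set $a_n=\FE_{n,M}/M^n$ for $n\in\N_0$ (with the convention $\FE_{-1,M}=\FE_{0,M}=0$). Dividing \cref{c01} by $M^n$ and using $M^{-\ell}\le 1$ shows that
\begin{equation*}
a_n \le (n+1)B_M + 2\sum_{\ell=0}^{n-1} a_\ell \qquad\text{for all } n\ge 1.
\end{equation*}
A straightforward induction (or an application of \cref{lem:gronwall2}) then produces $a_n\le 3^n B_M$, hence $\FE_{n,M}\le (3M)^n B_M$. Setting $M=k$ and summing, the dominant contribution comes from $k=n+1$, and the elementary estimate $3^{n+1}(n+1)^{2n+3}\le 12\cdot 36^n\, n^{2n}$ (valid for $n\ge 1$) converts $\sum_{k=1}^{n+1}(3k)^k(k^k\cone+\ctwo+\cthree)$ into the claimed bound $12(3\cone+\ctwo+2\cthree)\,36^n n^{2n}$.

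For \cref{k23_x}, let $K(x)=12bc^2\exp(9c^3T)(\lyaV(x))^{(\beta+1)/p}$ and $h(n)=\exp(2ncT+n/2)/n^{n/2}$. Since $\log h(n)=n(2cT+\tfrac{1}{2}-\tfrac{1}{2}\log n)\to-\infty$, the map $h$ decays to $0$ and is eventually monotonically decreasing. Define $\mathsf{n}(\epsilon,x)$ as the smallest $n\in\N$ such that $2K(x)h(n)\le\epsilon$ and $h$ is non-increasing on $[n,\infty)\cap\N$. By \cref{k22b_x} applied with $M=n$, this choice ensures $\sup_{n\ge\mathsf{n}(\epsilon,x)}(\E[|\bigV^{0}_{n,n}(t,x)-u(t,x)|^{2}])^{1/2}\le\epsilon$. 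The minimality of $\mathsf{n}$ furnishes a matching lower bound $\epsilon\ge c_0 K(x)h(\mathsf{n}(\epsilon,x))$ for some universal $c_0>0$, which rearranges to $\mathsf{n}^{\mathsf{n}/2}\le c_0^{-1}\epsilon^{-1}K(x)\exp(2\mathsf{n}cT+\mathsf{n}/2)$. Raising to the $(\gamma+4)$-th power, multiplying with the cost bound from \cref{k23_y}, and absorbing $36^n\exp(n(\gamma+4)/2)$ into $(5^n)^{\gamma+4}$ up to $\gamma$-dependent constants produces the stated estimate; the finiteness of $\sup_{n\in\N}[n^{-\gamma n/2}(5^n\exp(2ncT))^{\gamma+4}]$ is immediate from the super-exponential decay of $n^{-\gamma n/2}$ in $n$.

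The conceptual steps are transparent; the main technical obstacle is the careful bookkeeping of constants in \cref{k23_x} so that the final coefficient matches exactly $[45bc^2\exp(9c^3T)(\lyaV(x))^{(\beta+1)/p}]^{\gamma+4}$ and the base inside the supremum reduces to precisely $5^n$. This requires combining the slack from the minimality of $\mathsf{n}$ with the factor $36^n$ coming from the cost recursion and the factor $\exp(n(\gamma+4)/2)$ arising from $h(\mathsf{n})$, while propagating the dependence on $\lyaV(x)$, $b$, $c$, $\beta$, $p$, and $T$ through the error bound of \cref{k22b_x}.
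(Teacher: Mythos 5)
Your plan tracks the paper's own proof closely: parts~\cref{k20_x}--\cref{k22b_x} come from \cref{m01} with the uniform partition $\tau_k=kT/M^M$ exactly as you say, part~\cref{k23_y} follows from a $(3M)^n$ bound on $\FE_{n,M}$ obtained from the linear cost recursion, and part~\cref{k23_x} combines the error bound with the cost bound by choosing $\mathsf{n}$ so that minimality gives a matching estimate. Two concrete issues in part~\cref{k23_x} deserve attention.

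First, there is a direction error in the minimality step. With your definition of $\mathsf{n}(\epsilon,x)$, the inequality $\epsilon\ge 2K(x)h(\mathsf{n})$ is just the defining condition at $n=\mathsf{n}$; it is \emph{not} a consequence of minimality. What minimality actually gives (when $\mathsf{n}-1$ still lies in the range where $h$ is non-increasing) is the \emph{upper} bound $\epsilon<2K(x)h(\mathsf{n}-1)$, and this is the bound you need: raising it to the power $\gamma+4$ and multiplying by the cost estimate $\sum_{k=1}^{\mathsf{n}}\FE_{k,k}\le 12(3\cone+\ctwo+2\cthree)36^{\mathsf{n}-1}(\mathsf{n}-1)^{2(\mathsf{n}-1)}$ from part~\cref{k23_y}, the term $(h(\mathsf{n}-1))^{\gamma+4}$ contributes $(\mathsf{n}-1)^{-(\gamma+4)(\mathsf{n}-1)/2}$, which kills $(\mathsf{n}-1)^{2(\mathsf{n}-1)}$ precisely because $\gamma+4>4$. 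The rearrangement you write (``$\mathsf{n}^{\mathsf{n}/2}\le c_0^{-1}\epsilon^{-1}K(x)\exp(2\mathsf{n}cT+\mathsf{n}/2)$'') is the rearrangement of this \emph{upper} bound on $\epsilon$, so I take it the sign in ``$\epsilon\ge\dots$'' is a slip, but it should be corrected.

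Second, your definition of $\mathsf{n}$ couples the decay condition $2K(x)h(n)\le\epsilon$ with the monotonicity condition ``$h$ non-increasing on $[n,\infty)\cap\N$.'' When $\mathsf{n}(\epsilon,x)$ equals the monotonicity threshold (which happens for every $\epsilon$ that is not too small), minimality may fail because of the monotonicity constraint rather than the decay constraint, and then no upper bound on $\epsilon$ in terms of $h(\mathsf{n}-1)$ is available. This case has to be handled separately by a uniform bound, exactly as the paper handles the case $\mathsf{n}(\epsilon,x)=1$ with a crude estimate of $\FE_{1,1}$. The paper sidesteps this entirely by defining $\mathsf{n}(\epsilon,x)$ through the actual error, $\inf\{n\in\N\colon \sup_{k\ge n}\sup_t\E[|\bigV^0_{k,k}(t,x)-u(t,x)|^2]<\epsilon^2\}$, so that the only boundary case is $\mathsf{n}=1$. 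Both routes work, but yours needs the extra case.

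A smaller remark on part~\cref{k23_y}: your ``straightforward induction'' to $a_n\le 3^n B_M$ is not quite an immediate inductive step, because $\sum_{\ell=0}^{n-1}3^\ell\approx 3^n/2$ already saturates $3^n$ after multiplication by~$2$. It does hold, but one has to exploit $a_0=0$ (the recursion gives $\FE_{0,M}\le 0$) and solve the recurrence $u_{n+1}=3u_n+1$, $u_1=2$, giving $u_n=(5\cdot 3^{n-1}-1)/2\le 3^n$; alternatively one can apply \cref{lem:gronwall2} with $\alpha_0=0$ and compute the sum exactly. The paper avoids this by citing a ready-made lemma (Beck et al.), which yields $\FE_{n,M}\le\tfrac{3M^M\cone+\ctwo+2\cthree}{2}(3M)^n$; either constant feeds into the same final bound.
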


\begin{proof}[Proof of \cref{m01_x}]\sloppy
\newcommand{\bfdelta}{\boldsymbol{\delta}}
Throughout this proof 
let $\mathsf{n}\colon (0,1]\times \R^d\to [1,\infty]$
satisfy for all $\epsilon\in (0,1]$, $x\in\R^d$
that  
\begin{align}\label{r33}\small\begin{split}
\mathsf{n}(\epsilon,x)= \inf\left(\left\{n\in\N\colon 
\sup_{k\in[n,\infty)\cap\N}\sup_{t\in[0,T]}
\E\Bigl[\bigl|{\bigV}_{k,k}^{0}(t,x)-\smallU(t,x)\bigr|^2\Bigr]<\epsilon^2
\right\}\cup\{\infty\}\right).
\end{split}\end{align}
Observe that \cref{m01} (applied for every $M\in \N$ with $K\defeq M^M$, $(\tau_k)_{k\in \{0,1,\ldots,K\}}\defeq  (\frac{kT}{M^M})_{k\in \{0,1,\ldots,M^M\}}$ in the notation of \cref{m01}) establishes \cref{k20_x,k21_x,k22_x,k22b_x}.
Next note that the fact that 
$\lim_{n\to\infty}(e^{n/2}e^{2ncT}n^{-n/2})=0$ and
 \cref{k22b_x} show that for all $x\in\R^d$, $\epsilon\in(0,1]$ it holds that \begin{align}
\mathsf{n}(\epsilon,x)\in\N.\label{r01}
\end{align} 
Moreover, observe that
\eqref{c01} and, e.g.,
Beck et al.~\cite[Lemma~3.14]{beck2020overcomingelliptic} (applied for every $M\in \N$ with 
$\alpha \defeq (2M^M \cone +\ctwo+\cthree)$, $\beta \defeq (M^M\cone + \cthree)$, $(C_n)_{n\in \N_0}\defeq (\FE_{n,M})_{n\in \N_0}$ in the notation of Beck et al.~\cite[Lemma~3.14]{beck2020overcomingelliptic}) demonstrate that for all $n,M\in \N$ it holds that
\begin{equation}
\FE_{n,M}\le \left[\frac{3M^M \cone +\ctwo+2\cthree}{2} \right](3M)^n.
\end{equation}
This implies that for all $n\in \N$, $k\in \{1,2,\ldots,n\}$ it holds that
\begin{equation}\label{r01b}
\FE_{k,k}\le \tfrac{(3 \cone +\ctwo+2\cthree)(3k^2)^{k}}{2}\le  
\tfrac{(3 \cone +\ctwo+2\cthree)(3(n+1)^2)^{n+1}}{2}
\le  
\tfrac{(3 \cone +\ctwo+2\cthree)(3(2n)^2)^{n+1}}{2}
=
\tfrac{(3 \cone +\ctwo+2\cthree)(12n^2)^{n+1}}{2}.
\end{equation}
The fact that for all $n\in \N$ it holds that $n^3 \le 3^n$ hence ensures that for all $n\in \N$ it holds that
\begin{equation}
\begin{split}
\sum_{k=1}^{n+1}\FE_{k,k}&\le \tfrac{(3 \cone +\ctwo+2\cthree)(n+1)(12n^2)^{n+1}}{2}\le 
(3 \cone +\ctwo+2\cthree)n(12n^2)^{n+1}
\le 
12(3 \cone +\ctwo+2\cthree) 36^n n^{2n}.
\end{split}
\end{equation}
This establishes \cref{k23_y}. 
Next observe that \cref{k22b_x} and \cref{k23_y} prove that for all 
 $\gamma\in (0,1]$,
 $t\in[0,T]$, $x\in\R^d$,
$n\in\N$ it holds that
\begin{align}\begin{split}
&\left[\sum_{k=1}^{n+1}
\FE_{k,k}\right]
\left(\E\Bigl[\bigl|{\bigV}_{n,n}^{0}(t,x)-\smallU(t,x)\bigr|^2\Bigr]\right)^{\frac{4+\gamma}{2}}\\
&\leq 
12(3 \cone +\ctwo+2\cthree) 36^n n^{2n}\left(e^{n/2}e^{2ncT}n^{-n/2}+n^{-n/2}\right)^{\gamma+4}\bigl[12bc^2 e^{ {9}c^3T}
(\lyaV(x))^{(\beta+1)/p}\bigr]^{\gamma+4}\\
&= 
12(3 \cone +\ctwo+2\cthree) 36^nn^{-\gamma n/2} \left(e^{n/2}e^{2ncT}+1\right)^{\gamma+4}\bigl[12bc^2 e^{ {9}c^3T}
(\lyaV(x))^{(\beta+1)/p}\bigr]^{\gamma+4}
\\
&\le 
12(3 \cone +\ctwo+2\cthree) n^{-\gamma n/2} \left(36^{n/4}e^{n/2}e^{2ncT}\right)^{\gamma+4}\bigl[24bc^2 e^{ {9}c^3T}
(\lyaV(x))^{(\beta+1)/p}\bigr]^{\gamma+4}
\\
&\le 
(3 \cone +\ctwo+2\cthree) n^{-\gamma n/2} \left(5^ne^{2ncT}\right)^{\gamma+4}\bigl[45bc^2 e^{ {9}c^3T}
(\lyaV(x))^{(\beta+1)/p}\bigr]^{\gamma+4}.
%
%
%
\end{split}\end{align}
This, \eqref{r33}, and \eqref{r01}
show that for all 
 $\epsilon,\gamma\in (0,1]$,
 $t\in[0,T]$, $x\in\R^d$ 
 with $\mathsf{n}(\epsilon,x)\geq 2$ 
 it holds that
\begin{align}\begin{split}\label{eq:aux1}
&
\left[\sum_{k=1}^{\mathsf{n}(\epsilon,x)}
\FE_{k,k}\right]\epsilon^{4+\gamma}\leq 
\left[\sum_{k=1}^{\mathsf{n}(\epsilon,x)}
\FE_{k,k}\right]
\Bigl(\E\Bigl[\bigl|{\bigV}_{\mathsf{n}(\epsilon,x)-1,\mathsf{n}(\epsilon,x)-1}^{0}(t,x)-\smallU(t,x)\bigr|^2\Bigr]\Bigr)^{\frac{4+\gamma}{2}}\\
&
\leq (3 \cone +\ctwo+2\cthree)\left[\sup_{n\in\N}\left[n^{-\gamma n/2}\left(5^ne^{2ncT}\right)^{\gamma+4}\right]
\right]
\bigl[45bc^2 e^{9c^3T}
(\lyaV(x))^{(\beta+1)/p}\bigr]^{\gamma+4}.
\end{split}\end{align}
Moreover, observe that \eqref{r01b}
demonstrates that for all $\epsilon,\gamma\in (0,1]$,
 $t\in[0,T]$, $x\in\R^d$ with $\mathsf{n}(\epsilon,x)=1$
 it holds
 that
 $
(\sum_{k=1}^{\mathsf{n}(\epsilon,x)}
\FE_{k,k})\epsilon^{4+\gamma}\leq 
\FE_{1,1}\leq 72(3 \cone +\ctwo+2\cthree)
$.
The fact that $b\geq 1$, the fact that $c\geq 1$, the fact that $\lyaV\geq 1$, the fact that $72\leq 45^4$, and \eqref{eq:aux1} therefore prove that 
\begin{align}
\left[\sum_{k=1}^{\mathsf{n}(\epsilon,x)}
\FE_{k,k}\right]\epsilon^{4+\gamma}\leq
(3 \cone +\ctwo+2\cthree)
\left[\sup_{n\in\N}\left[n^{-\gamma n/2}\left(5^ne^{2ncT}\right)^{\gamma+4}\right]
\right]
\bigl[45bc^2 e^{9c^3T}
(\lyaV(x))^{(\beta+1)/p}\bigr]^{\gamma+4}.
\end{align} 
Combining this with \eqref{r33} and \eqref{r01}
establishes \cref{k23_x}. The proof of \cref{m01_x} is thus complete.
\end{proof}

\subsection{Complexity analysis for MLP approximations in variable space dimensions}
\label{sec:var_dim}

\begin{lemma}\label{a01}
Let  
$d\in\N$, 
$a\in[0,\infty)$, let $\lVert\cdot\rVert\colon\R^d\to [0,\infty)$ and
$\varphi\colon \R^d\to \R$
 satisfy for all $x=(x_1,x_2,\ldots, x_d)\in\R^d$ that
$
\varphi(x)=2a+2\|x\|^2=2a+2[\sum_{i=1}^{d}|x_i|^2]
$. Then it holds for all $x,y\in\R^d$ that
$\sqrt{a}+\|x\|\leq |\varphi(x)|^{\nicefrac{1}{2}}$,
$
|(\varphi'(x))(y)|\leq 4|\varphi(x)|^{\nicefrac{1}{2}}
\|y\|$, and $
(\varphi''(x))(y,y)= 4\|y\|^2.$
\end{lemma}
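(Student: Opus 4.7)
The function $\lyaV(x) = 2a + 2\lVert x\rVert^2$ is a translated quadratic in the standard Euclidean norm, so each of the three claims reduces to an elementary computation. The plan is to verify them one at a time, in order.

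First I would establish the lower bound $\sqrt{a}+\lVert x\rVert \leq |\lyaV(x)|^{1/2}$ by squaring both sides. Expanding $(\sqrt{a}+\lVert x\rVert)^2 = a + 2\sqrt{a}\lVert x\rVert + \lVert x\rVert^2$ and applying the AM--GM inequality in the form $2\sqrt{a}\lVert x\rVert \le a + \lVert x\rVert^2$ gives $(\sqrt{a}+\lVert x\rVert)^2 \le 2a + 2\lVert x\rVert^2 = \lyaV(x)$, and since $\lyaV(x) \ge 0$, taking square roots yields the claim.

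Next I would compute the first derivative of $\lyaV$ directly. Since $\lyaV(x) = 2a + 2\sum_{i=1}^d |x_i|^2$, the Fréchet derivative at $x$ evaluated at $y$ is $(\lyaV'(x))(y) = 4\sum_{i=1}^d x_i y_i = 4\langle x, y\rangle$. Applying the Cauchy--Schwarz inequality gives $|(\lyaV'(x))(y)| \le 4\lVert x\rVert\lVert y\rVert$, and from $\lVert x\rVert^2 \le 2a + 2\lVert x\rVert^2 = \lyaV(x)$ one has $\lVert x\rVert \le |\lyaV(x)|^{1/2}$, which gives the bound $|(\lyaV'(x))(y)| \le 4|\lyaV(x)|^{1/2}\lVert y\rVert$.

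Finally, since $\lyaV(x)$ is a quadratic polynomial in $x$ with the pure-quadratic part $2\lVert x\rVert^2$ and a constant offset $2a$, the second Fréchet derivative at any point $x\in\R^d$ is the constant symmetric bilinear form $(\lyaV''(x))(y,z) = 4\langle y,z\rangle$; in particular $(\lyaV''(x))(y,y) = 4\lVert y\rVert^2$, which is the third claim. None of the steps present any substantive obstacle; the only detail worth double-checking is the constant in the first bound, where the AM--GM step is the crucial (and unique) non-trivial inequality.
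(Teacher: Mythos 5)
Your proof is correct and follows essentially the same route as the paper: the key estimate in the first claim is the elementary inequality $2\sqrt{a}\,\lVert x\rVert\le a+\lVert x\rVert^2$ (the paper packages it as $\sqrt{s}+\sqrt{t}\le\sqrt{2s+2t}$), and the derivative computations plus Cauchy--Schwarz are identical. The only cosmetic difference is that for the second claim the paper reuses the first claim to bound $\lVert x\rVert\le|\varphi(x)|^{1/2}$, while you derive that bound directly from $\lVert x\rVert^2\le\varphi(x)$; both are trivial and interchangeable.
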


\begin{proof}
[Proof of \cref{a01}]
Observe that the fact that for all 
$v, w \in \R$ it holds that 
$2 v w \leq v^2 + w^2$ ensures that 
for all $s, t \in [0,\infty)$ it holds that
$\sqrt{s} + \sqrt{t} \leq \sqrt{ 2 s + 2 t }$. 
This and the hypothesis that for all $x \in \R^d$
it holds that $\varphi(x)=2a+2\|x\|^2
$ prove that for all 
$x \in \R^d$ it holds that
\begin{equation}\label{eq:a02}
\sqrt{a}+\|x\|\leq
( 2a+2\|x\|^2)^{\nicefrac{1}{2}}=|\varphi(x)|^{\nicefrac{1}{2}}.
\end{equation}
Next note that the hypothesis that for all $x\in\R^d$ it holds that $\varphi(x)=2a+2\|x\|^2$
shows that for all $i, j \in \{ 1, 2, \ldots, d \}$, $x=(x_1,x_2,\ldots, x_d) \in \R^d$ it holds that
$\varphi\in C^2(\R^d,\R)$, $ (\tfrac{\partial}{\partial x_i}\varphi) (x)=4x_i$, and $(\tfrac{\partial^2}{\partial x_i\partial x_j}\varphi) (x)=4\1_{\{i\}} (j)$. 
Combining
this, the Cauchy-Schwarz inequality, and \eqref{eq:a02}
demonstrates that for all $x=(x_1,x_2,\ldots, x_d)$, $y=(y_1,y_2,\ldots,y_d)\in\R^d$
it holds that $
|(\varphi'(x))(y)|=|\sum_{i=1}^{d}4x_iy_i|\leq 4\|x\|\|y\|\leq 4|\varphi(x)|^{\nicefrac{1}{2}}
\|y\|
$
and
$
(\varphi''(x))(y,y)=4\|y\|^2
$. The proof of \cref{a01} is thus complete.
\end{proof}

\newcommand{\crv}{\mathfrak{v}}
\newcommand{\cmu}{\mathfrak{m}}
\newcommand{\cg}{\mathfrak{g}}
\newcommand{\cf}{\mathfrak{f}}
\begin{corollary}
\label{m01c}
Let
$\gamma \in (0,1]$,
$T, c, \crv, \cmu, \cf,\cg \in [0,\infty)$,
$f\in C( \R,\R)$,
for every $d\in\N$ let $u_d\in C^{1,2}([0,T]\times\R^d,\R)$,
$\mu_d=
(\mu_{d,i})_{i\in\{1,2,\ldots,d\}}
\in C( \R^{d} ,\R^{d})$, $\sigma_d=(\sigma_{d,i,j})_{i,j\in \{1,2,\ldots,d\}}\in C( \R^{d},\R^{d\times d})$
satisfy for all $t\in[0,T]$,
 $x=(x_1,x_2,\ldots,x_d)$, $y=(y_1,y_2,\ldots,y_d)\in\R^d$
  that
\begin{equation}\label{eq:Lipschitz_f_cor}
|u_d(t,x)|^2+ 
\max_{ i, j \in \{ 1, 2, \ldots, d \} } ( | \mu_{ d, i }( 0 ) | + | \sigma_{ d, i, j }( 0 ) |)
\leq c \Bigl[d^c+\textstyle\sum\limits_{i=1}^{d}\displaystyle |x_i|^2\Bigr],
\end{equation}
\begin{equation}\label{eq:Lipschitz_coeffs_cor}
|u_d(T,x)-u_d(T,y)|^2+
\textstyle \sum\limits_{i=1}^{d} \displaystyle
 |\mu_{d,i}(x)-\mu_{d,i}(y)|^2+
\textstyle \sum\limits_{i,j=1}^{d}\displaystyle
|\sigma_{d,i,j}(x)-\sigma_{d,i,j}(y)|^2
\leq c^2\Big[
\textstyle \sum\limits_{i=1}^{d} \displaystyle |x_i-y_i|^2
\Big],
\end{equation}
\begin{equation}\label{eq:PDE_cor}
(\tfrac{\partial }{\partial t}u_d)(t,x)+
(\tfrac{ \partial }{ \partial x } u_d )( t, x ) \, \mu_d(x) 
+ \tfrac{1}{2}
\trace\big(\sigma_d(x)[\sigma_d(x)]^* (\Hess_xu)(t,x)\big)=
-f(u_d(t,x)),
\end{equation}
and $|f(x_1)-f(y_1)|\leq c|x_1-y_1|$,
let $(\Omega,\mathcal{F},\P)$ be a probability space,
let 
$  \Theta = \bigcup_{ n \in \N }\! \Z^n$,
let $\unif^\theta\colon \Omega\to[0,1]$, $\theta\in \Theta$, be i.i.d.\ random variables, 
 let $W^{d,\theta}\colon [0,T]\times\Omega \to \R^{d}$,
$d\in\N$, $\theta\in\Theta$, be i.i.d.\ standard Brownian motions,
assume for all $t\in (0,1)$ that $\P(\unif^{0}\le t)=t$,
assume that $( \unif^{ \theta } )_{ \theta \in\Theta}$ and $( W^{ d, \theta } )_{ (d,\theta) \in \N\times\Theta }$ are independent,
for every $d,N\in\N$,
$\theta\in\Theta$, $x\in\R^d$,
$t\in[0,T]$
let 
$
Y^{d,N,\theta,x}_{t}=
(Y^{d,N,\theta,x}_{t,s})_{s\in[t,T]}\colon[t,T]\times\Omega\to\R^d$ 
 satisfy for all $n\in\{0,1,\ldots,N\}$,
 $s\in[\frac{nT}{N},\frac{(n+1)T}{N}]\cap[t,T]$ that $Y_{t,t}^{d,N,\theta,x}=x$ and
\begin{equation}\label{eq:euler_cor}
\begin{split}
&Y_{t,s}^{d,N,\theta,x} -
Y_{t,\max\left\{t,nT/N\right\}}^{d,N,\theta,x}\\
&=
\mu_d\big(Y_{t,\max\left\{t,nT/N\right\}}^{d,N,\theta,x}\big)\big(s-\max\!\big\{t,\tfrac{nT}{N}\big\}\big)+
\sigma_d\big(Y_{t,\max\left\{t,nT/N\right\}}^{d,N,\theta,x}\big)\big(W^{d,\theta}_{s} -W^{d,\theta}_{\max\{t,nT/N\}} \big),
\end{split}
\end{equation}
let
$ 
  {\bigV}_{ n,M}^{d,\theta} \colon [0, T] \times \R^d \times \Omega \to \R
$, 
$d,n,M\in\Z$, $\theta\in\Theta$, satisfy
for all $d,M \in \N$, $n\in \N_0$, $\theta\in\Theta $,
$ t \in [0,T]$, $x\in\R^d $
that 
\begin{align}\label{eq:method_cor}
  &{\bigV}_{n,M}^{d,\theta}(t,x)
=
  \tfrac{ \1_{ \N }( n )}{M^n}
 \textstyle\sum\limits_{i=1}^{M^n} \displaystyle
      u_d\big(T,\sppr^{d,M^M,(\theta,0,-i),x}_{t,T}\big)
 \\
 \nonumber
&  +
  \textstyle\sum\limits_{\ell=0}^{n-1} \displaystyle \left[ \tfrac{(T-t)}{M^{n-\ell}}
   \textstyle\sum\limits_{i=1}^{M^{n-\ell}}\displaystyle
      \big(\smallF\circ {\bigV}_{\ell,M}^{d,(\theta,\ell,i)}-\1_{\N}(\ell)\,\smallF\circ{\bigV}_{\ell-1,M}^{d,(\theta,-\ell,i)}\big)
\big(t+(T-t)\unif^{(\theta,\ell,i)},\sppr_{t,t+(T-t)\unif^{(\theta,\ell,i)}}^{d,M^M,(\theta,\ell,i),x}\big)
    \right]\!,
\end{align}
and let $\FE_{d,n,M}\in \R$, $d,n,M\in  \Z$,
satisfy for all $n\in \Z$, $d,M\in \N$ that  
\begin{equation}\label{eq:comp_effort_cor}
\begin{split}
\FE_{d,n,M}&\le M^n(M^Md\crv+M^M\cmu+\cg) \1_{ \N }( n )\\ &+\sum_{\ell=0}^{n-1}\left[M^{n-\ell}
\Bigl((M^Md+1)\crv+M^M\cmu+2\cf+\FE_{d,\ell,M}+\FE_{d,\ell-1,M}
\Bigr)
\right].
\end{split}
\end{equation} 
Then there exist $\mathfrak{c}\in \R$ and $\mathsf{n}\colon \N\times (0,1] \to \N$ such that
for all $d\in\N$, $\epsilon \in(0,1]$
it holds that $\big(\E\bigl[|\smallU_d(0,0)-{\bigV}_{\mathsf{n}(d,\epsilon),\mathsf{n}(d,\epsilon)}^{d,0}(0,0)|^2\big]\big)^{ 1/2 }\leq \epsilon$ and $\FE_{d,\mathsf{n}(d,\epsilon),\mathsf{n}(d,\epsilon)}
\leq \mathfrak{c}(1+d\crv )
d^{(\gamma+4)(2c+2)}\epsilon^{-(\gamma+4)}$.
\end{corollary}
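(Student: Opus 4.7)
The plan is to apply Theorem~\ref{m01_x} (item~\ref{k23_x}) dimensionwise at the point $x=0$ and to package the resulting estimates into the uniform-in-$d$ statement of Corollary~\ref{m01c}. The decisive design choice is the Lyapunov function $\lyaV$ together with the exponents $(\beta,p)$: they must be chosen so that all hypotheses of Theorem~\ref{m01_x} hold with constants (call them $b$ and $c_{\star}$, to distinguish from the $c$ of Corollary~\ref{m01c}) that are \emph{independent} of $d$---otherwise the factor $\exp(9c_{\star}^{3}T)$ appearing in the complexity bound of Theorem~\ref{m01_x}(\ref{k23_x}) would be super-polynomial in $d$---while $\lyaV(0)^{(\beta+1)/p}$ absorbs the entire polynomial-in-$d$ growth of the problem. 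Identification of the $u$ produced by Theorem~\ref{m01_x}(\ref{k21_x}) with the classical PDE solution $u_d$ of Corollary~\ref{m01c} is a routine Feynman--Kac step using $u_d\in C^{1,2}$ and the polynomial growth bound~\eqref{eq:Lipschitz_f_cor}, combined with the uniqueness assertion in~\ref{m01_x}(\ref{k21_x}).

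\textbf{Construction of $\lyaV_d$ and the forced choice $p=2$, $\beta=1$.} I will take $\lyaV_d(x):=2d^{2c+2}+2\|x\|^{2}$, so that $\lyaV_d\in C^2(\R^d,[2,\infty))$. Evaluating~\eqref{eq:Lipschitz_f_cor} at $x=0$ gives the componentwise bound $|\mu_{d,i}(0)|+|\sigma_{d,i,j}(0)|\le cd^{c}$, hence $\|\mu_d(0)\|^{2}\le c^{2}d^{2c+1}$ and $\smallsum_{i,j=1}^{d}|\sigma_{d,i,j}(0)|^{2}\le c^{2}d^{2c+2}$. Lemma~\ref{a01} with $a=d^{2c+2}$ shows that the first two ratios in~\eqref{v05b_x} are bounded by $4$ for every $p\ge 2$. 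For the last two ratios, the inequality $(c_{\star}\|x\|+B)^{p}\le c_{\star}^{p}\lyaV_d(x)$ must hold for all $\|x\|\in[0,\infty)$; letting $\|x\|\to\infty$ forces $p\le 2$, and once $p=2$ is fixed, non-positivity of the discriminant of the resulting quadratic in $\|x\|$ is equivalent to $B\le c_{\star}d^{c+1}$. Both bounds on $\|\mu_d(0)\|$ and on the Frobenius norm of $\sigma_d(0)$ satisfy this with $c_{\star}\ge c$. Accordingly I fix $c_{\star}:=\max\{c,4\}$, $p:=2$, $\beta:=1$; since $p\ge 2\beta$ and $\beta\ge 1$ this is admissible and simultaneously optimal.

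\textbf{Verification of the remaining hypotheses and cost mapping.} With $(\beta,p)=(1,2)$, the inequality $|u_d(T,x)|^{2}\le c(d^{c}+\|x\|^{2})\le c\lyaV_d(x)/2$ (from~\eqref{eq:Lipschitz_f_cor}) together with the Lipschitz properties~\eqref{eq:Lipschitz_coeffs_cor} and the Lipschitz property of $f$ deliver~\eqref{v04b_x}--\eqref{v07_x} with the $d$-independent choice $b:=\max\{T|f(0)|,cT^{1/2},\sqrt{c/2},1\}$, and~\eqref{v03-2_x} is precisely~\eqref{eq:Lipschitz_coeffs_cor}. To align the cost recursion~\eqref{eq:comp_effort_cor} with the abstract recursion~\eqref{c01} in Theorem~\ref{m01_x}, I set the Theorem~\ref{m01_x} parameters to $\cone:=(d+1)\crv+\cmu$, $\ctwo:=\cg$, $\cthree:=2\cf$; one checks that both the base and recursive terms in~\eqref{eq:comp_effort_cor} are then dominated by the corresponding terms of~\eqref{c01}. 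The prefactor $(3\cone+\ctwo+2\cthree)$ appearing in Theorem~\ref{m01_x}(\ref{k23_x}) becomes $3(d+1)\crv+3\cmu+\cg+4\cf\le C_1(1+d\crv)$, which will produce exactly the factor $(1+d\crv)$ in the target estimate.

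\textbf{Conclusion and main obstacle.} For each $d\in\N$, Theorem~\ref{m01_x}(\ref{k23_x}) produces $\mathsf{n}_d\colon(0,1]\times\R^d\to\N$; I define $\mathsf{n}(d,\epsilon):=\mathsf{n}_d(\epsilon,0)$. The error inequality is then immediate from Theorem~\ref{m01_x}(\ref{k23_x}). Evaluating the complexity bound of that item at $x=0$ with $\lyaV_d(0)^{(\beta+1)(\gamma+4)/p}=(2d^{2c+2})^{\gamma+4}$ and using $\FE_{d,\mathsf{n}(d,\epsilon),\mathsf{n}(d,\epsilon)}\le\smallsum_{n=1}^{\mathsf{n}(d,\epsilon)}\FE^{(d)}_{n,n}$ yields the claimed $d^{(\gamma+4)(2c+2)}\epsilon^{-(\gamma+4)}$ dependence, with a $d$-independent constant $\mathfrak{c}$ absorbing $45bc_{\star}^{2}\exp(9c_{\star}^{3}T)$, $\sup_{n}[n^{-\gamma n/2}(5^{n}\exp(2nc_{\star}T))^{\gamma+4}]$, and the numerical factors from Step~3. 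The main obstacle is the quadratic/asymptotic analysis of~\eqref{v05b_x}: requiring the last two ratios to be bounded by a $d$-independent constant simultaneously \emph{forces} $p=2$ and $a_d\gtrsim d^{2c+2}$, which is exactly what determines the exponent $2c+2$ in the final complexity. Once this calibration is carried out, everything else reduces to a direct appeal to Theorem~\ref{m01_x} and bookkeeping of the cost recursion.
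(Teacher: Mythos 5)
Your proposal is correct and follows essentially the same route as the paper: the same Lyapunov function $\lyaV_d(x)=2d^{2c+2}+2\|x\|^2$ analyzed via Lemma~\ref{a01}, the same calibration $p=2$, $\beta=1$ with $d$-independent constants (the paper takes $c+4$ and $b=\sqrt{c}+c\sqrt{T}+T|f(0)|$ where you take $\max\{c,4\}$ and a slightly different $b$, which is immaterial), the same mapping of the cost recursion \eqref{eq:comp_effort_cor} into \eqref{c01} (the paper's $\cone=\cmu+d\crv$, $\cthree=\crv+2\cf$ matches exactly while your choice merely dominates, which suffices), and the same evaluation of Theorem~\ref{m01_x}(\ref{k23_x}) at $x=0$ to extract the $(1+d\crv)d^{(\gamma+4)(2c+2)}\epsilon^{-(\gamma+4)}$ bound. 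The only substantive difference is that you make explicit the Feynman--Kac identification of $u_d$ with the fixed point from Theorem~\ref{m01_x}(\ref{k21_x}), a step the paper's proof leaves implicit.
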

\begin{proof}[Proof of \cref{m01c}]
Throughout this proof assume without loss of generality that $T > 0$ and let $\mathfrak{c}\in \R$ satisfy that
\begin{equation}
\begin{split}
\mathfrak{c}&= (3 \cmu +\cg+2(\crv+2\cf)+3)\big[
\sup\nolimits_{n\in\N}\big(n^{-\gamma n/2}\left(5^ne^{2n(c+4)T}\right)^{\gamma+4}\big)\big]
\\
&\quad \cdot
\bigl[90( \sqrt{c}+c\sqrt{T}+T|f(0)|)(c+4)^2 e^{9(\sqrt{c}+c+4)^3T}
\bigr]^{\gamma+4}.
\end{split}
\end{equation}
Note that \cref{m01_x} (applied for every $d\in \N$ with $m\defeq d$, $g \defeq \smallU_d(T,\cdot)$, $\mu\defeq \mu_d$, $\sigma\defeq \sigma_d$,
 $\varphi \defeq (\R^d \ni x=(x_1,x_2,\ldots, x_d)\mapsto 2d^{2c+2}+2[\sum_{i=1}^d|x_i|^2]\in [1,\infty))$,
 $f\defeq ([0,T]\times \R^d\times \R \ni (t,x,v)\mapsto f(v)\in \R)$,
 $\beta \defeq 1$, $b\defeq (\sqrt{c}+c\sqrt{T}+T|f(0)|)$, $c\defeq (c+4)$, $p\defeq 2$,
 $\cone \defeq (\cmu+d\crv)$, $\ctwo \defeq \cg$, $\cthree \defeq (\crv+2\cf)$ in the notation of \cref{m01_x}) and \cref{a01} (applied for every $d\in \N$ with $a\defeq d^{2c+2}$ in the notation of \cref{a01}) prove that there exists 
$\mathsf{n}\colon \N\times (0,1] \to \N$ such that for all $d\in \N$, $\epsilon \in (0,1]$ it holds that 
$\big(\E\bigl[|\smallU_d(0,0)-{\bigV}_{\mathsf{n}(d,\epsilon),\mathsf{n}(d,\epsilon)}^{d,0}(0,0)|^2\big]\big)^{ 1/2 }\leq \epsilon$
and
\begin{align}\begin{split}
\FE_{d,\mathsf{n}(d,\epsilon),\mathsf{n}(d,\epsilon)}
\epsilon^{\gamma+4}
&\leq  (3 (\cmu+d\crv) +\cg+2(\crv+2\cf))
\big[\sup\nolimits_{n\in\N}\big(n^{-\gamma n/2}\left(5^ne^{2n(c+4)T}\right)^{\gamma+4}\big)\big]
\\
&\quad \cdot
\bigl[45( \sqrt{c}+c\sqrt{T}+T|f(0)|)(c+4)^2 e^{9(\sqrt{c}+c+4)^3T}
(2d^{2c+2})\bigr]^{\gamma+4}\\
&\leq  (3 \cmu +\cg+2(\crv+2\cf)+3)(1+d\crv)
\big[\sup\nolimits_{n\in\N}\big(n^{-\gamma n/2}\left(5^ne^{2n(c+4)T}\right)^{\gamma+4}\big)\big]
\\
&\quad \cdot
\bigl[90( \sqrt{c}+c\sqrt{T}+T|f(0)|)(c+4)^2 e^{9(\sqrt{c}+c+4)^3T}
\bigr]^{\gamma+4}d^{(\gamma+4)(2c+2)}\\
&= \mathfrak{c}(1+d\crv )d^{(\gamma+4)(2c+2)}.
\end{split}\end{align}
The proof of \cref{m01c} is thus complete.
\end{proof}

\subsubsection*{Acknowledgements}
This work has been funded by the Deutsche Forschungsgemeinschaft (DFG, German Research Foundation) under Germany’s Excellence Strategy EXC 2044-390685587, Mathematics M\"unster:  Dynamics-Geometry-Structure and through the research grant HU1889/6-1.

{
\small
\bibliographystyle{acm}
\bibliography{bibfile}
}

\end{document}